\theoremstyle{plain}
\newtheorem{theorem}{Theorem}\numberwithin{theorem}{section}
{}
\newtheorem{main}{Main~Theorem}{}
\newtheorem{lemma}{Lemma}\numberwithin{lemma}{section}
\newtheorem{proposition}{Proposition}\numberwithin{proposition}{section}
\numberwithin{corollary}{section}
\theoremstyle{definition}
\newtheorem{definition}{Definition}\numberwithin{definition}{section}
\theoremstyle{remark} \theoremstyle{exam} \theoremstyle{ob}
\newtheorem{remark}{Remark}\numberwithin{remark}{section}
\numberwithin{question}{section}
\newtheorem{exam}{Example}\numberwithin{exam}{section}
\numberwithin{ob}{section}
\numberwithin{nt}{section}
\numberwithin{equation}{section}
\begin{document}
\title[Discrete Double Hilbert Transforms Along Polynomial Surfaces]
{ Discrete Double Hilbert Transforms Along Polynomial Surfaces}

\author
{Joonil Kim, \ Hoyoung Song}

\address{Joonil Kim, Department of Mathematics \\
       Yonsei University \\
       Seoul 120-729, Republic of Korea}
\email{jikim7030@yonsei.ac.kr}
\address{Hoyoung Song, Department of Mathematics \\
	Yonsei University \\
	Seoul 120-729, Republic of Korea}
\email{nonspin0070@gmail.com}

\keywords{Circle method, Discrete double Hilbert transform, Two-parameter circle method, Weyl sum, $\ell^{p}$ boundedness.}

\subjclass[2000]{Primary 42B20, 42B25}
 
 \begin{abstract}
 We develop the Hardy-Ramanujan circle method for a two-parameter setting and prove that the exponential sum, with amplitude $1/(t_1t_2)$  over $(t_1,t_2)$  in lattice boxes of any size, is uniformly bounded if and only if all vertices of the Newton polyhedron of the phase polynomial have at least one even component. Additionally, we establish the double arc method to handle the graph case $(t_1,t_2,P(t_1,t_2))$ without any degree constraints, a problem introduced as a challenging issue by Bourgain, Mirek, Stein, and Wright\cite{B4}. We also show the  
$\ell^{p}$ boundedness of the  discrete double Hilbert transforms along polynomial surfaces given the evenness condition for the vertices. 
 \end{abstract}
 \maketitle

\setcounter{tocdepth}{1}

\tableofcontents

\section{Introduction}
For $d\ge 1$, let $  \mathbb{Z}[t]$ be the set of polynomials of $t=(t_1,\cdots,t_d)$ whose coefficients are integers. We write its element $P(t)$ as
$P(t)=\sum_{\mathfrak{m}\in\Lambda(P)} c_{\mathfrak{m}}t^{\mathfrak{m}}$. Here, $\Lambda(P)\subset \mathbb{Z}_{\ge 0}^d$ is the set of all exponents $\mathfrak{m}$ of $t^{\mathfrak{m}}$ in $P(t)$.
 For the last three decades, the Hardy-Ramanujan circle method  has been an indispensable tool  for studying the exponential sums associated with the  following two types of summations at $x\in\mathbb{Z}$:
\begin{align*}
A_N(f)(x)&=\frac{1}{2N+1}\sum_{t\in \mathbb{Z}; |t|\le N}f(x- P(t) ) \\
H_N(f)(x)&= \sum_{t\in \mathbb{Z}\setminus\{0\}; |t|\le N}f(x- P(t) ) \frac{1}{t}
\end{align*}
where $P(t)\in \mathbb{Z}[t]$ and   $f\in \mathcal{S}(\mathbb{R})$. In the late 1980s, Bourgain \cite{B1,B2,B3} utilized the circle method and estimated the exponential sum associated with the discrete average 
 $A_N$  to prove   the $\ell^p(\mathbb{Z})\mapsto \ell^p(\mathbb{Z})$ boundedness of the  discrete maximal average operator $f\rightarrow\sup_N A_N(f)$.  His results  generalized the   Birkoff ergodic theorem \cite{Birkh}, showing that
\begin{align}\label{0101}
 \lim_{N\rightarrow\infty} \frac{\sum_{t\in [1,N]\cap \mathbb{Z}} f(T^{P(t)}x)}{N}\ \text{for a.e. $x\in X$}
\end{align}
where   $f\in L^{\infty}(X)$ and $T:X\rightarrow X$ is a 1-1 and onto measure-preserving map on a probability space $(X,\mathcal{B}(X),\mu)$.  
In 1987  Arkhipov and Oskolkov   \cite{AO} demonstrated the uniform boundedness of the exponential sum associated with the above discrete Hilbert transform $H_N$:  
\begin{align}\label{11}
\sup_{N\in \mathbb{N}\ \text{and coefficients of $P$} }\left| \sum_{t\in \mathbb{Z};\ 1\le |t|\le N} \frac{e^{2\pi i  P(t)}}{t}\right|\le C
\end{align} 
where $C>0$ is independent of coefficients of $P(t)\in \mathbb{R}[t]$ but it depends on only $deg(P)$.       In 1990,  Stein and Wainger \cite{SW2}  replaced  the amplitude  $1/t$ of the above discrete Hilbert transform $H_N$ with the Calderon-Zygmund kernel $K(t)$, and defined the discrete  singular Radon transform  $f\mapsto Rf$  by
 $$Rf(x):=\sum_{t\in \mathbb{Z}^d\setminus\{{\bf 0} \}  }  f(x-(P_1(t),\cdots,P_M(t))K(t)\ \text{where $P_\nu(t)\in   \mathbb{Z}[t_1,\cdots,t_d]$}$$  
for $\nu=1,\cdots,d$. They proved  the $\ell^p(\mathbb{Z}^M)\rightarrow\ell^p(\mathbb{Z}^M)$    $(3/2< p <3)$  boundedness of $R$. In 2002, Magyar, Stein and Wainger in \cite{MSW} generalized a sampling technique to handle a class of discrete multipliers to prove the $\ell^p$ boundedness of the discrete analogue for Stein's maximal  averages over spheres. In 2006,  Ionescu and Wainger \cite{IW} established the  theorem  that serves as an efficient tool for handling a wide class of discrete multipliers, including the discrete Radon transform $R$ in the full range $1<p<\infty$. Subsequently, in 2007, Ionescu, Magyar, Stein and Wainger \cite{IMSW} investigated the non-translation invariant  cases associated with polynomials whose degree $\le 2$.  They  obtained the corresponding ergodic theorems of (\ref{0101}) related with  non-commuting measure preserving maps.   Later,  Mirek and Trojan   \cite{MT} completed  the multi-dimensional  $L^p$ ergodic theorem for $p>1$  concerning $T_1^{P_1(t)}\cdots T_M^{P_M(t)}$ over the cube $[1,N]^d$ in (\ref{0101}).
Recently,  Mirek, Szarek and Kranich   \cite{MSZ} obtained the useful estimate known as the  jump inequality for discrete Radon-transforms. More recently,  Ionescu, Magyar,  Mirek and Szarek \cite{IMMS1} proved some partial cases of the  Furstenberg-Bergelson-Leibman conjecture regarding the pointwise ergodic theorems, extending the result of \cite{IMSW}. For a   survey, see \cite{IMW} and for  a generalization of Ionescu--Wainger multiplier theorem, refer to Tao's work in \cite{Tao}.
\subsection{Multi-Parameter-Theory}  In 1951, Dunford  \cite{Dun} and Zygmund \cite{Zyg}   initiated the study of the multi-parameter  extension of the Birkhoff  ergodic theorem where   $[1,N]$ in (\ref{0101})  is replaced with    $[1,N_1]\times \cdots\times [1,N_d]$.  Recently, Bourgain, Mirek, Stein and Wright \cite{B4}  proved the multi-parameter ergodic theorem for  $T^{P(t)}$ with $t=(t_1,\cdots,t_d)\in [1,N_1]\times \cdots\times [1,N_d]$ in (\ref{0101}). To achieve this, they developed a multi-parameter-circle-method  associated with the non-cubic box $[1,N_1]\times \cdots [1,N_d]$. They estimated  the discrete averages $A_{N}f$ over rectangles $[1,N_1]\times \cdots \times [1,N_d]$ in the oscillation semi-norm. See also \cite{MSW} for related oscillation 
 semi-norm-estimates.  
 Moreover,  they obtained the multi-parameter   ergodic theorems involving  $T_1^{t_1}T_2^{t_2} T_3^{P(t_1,t_2)}$ under the assumption  that degrees of $P(t_1,t_2)$ with respect to $t_1$ and $t_2$ are both bigger than $1$ in \cite{B4}.
An unknown case in the study of the graph  $(t_1,t_2,P(t_1,t_2))$ in \cite{B4}  involves a polynomial expressed as:
  \begin{align}\label{chal1}
  P(t_1,t_2)= R_1(t_2)t_1^1+R_0(t_2).
  \end{align}
In    \cite{B4}, they suggest a challenging model case  $P(t_1,t_2)=t_2^2t_1^1$ (role of $t_1$ and $t_2$ is switched) to explain Conjecture 1.22 of \cite{B4}.  In this paper, we independently develop a
 multi-parameter-circle-method to address a multi-parameter extension of the discrete discrete Hilbert transform $H_N$ for the graph case $(t_1,t_2,P(t_1,t_2))$ for all polynomial $P$, including the case described in  (\ref{chal1}). 
To start, we consider the
 multi-parameter version of  Arkhipov and Oskolkov's theorem of  (\ref{11}) using the following basic setting:
\begin{itemize}
\item the phase $P(t)$ is replaced with   $P(t_1,t_2) $
\item the amplitude $1/t$ is replaced with  $1/(t_1t_2)$ 
\item the range $1\le |t|\le N$  is replaced with the lattice box $R(N_1, N_2)$ where  $$R(N_1, N_2):=\{t\in \mathbb{Z}^2: 1\le |t_1|\le N_1,|t_2|\le N_2\}\ \text{where $ (N_1,N_2)\in \mathbb{N}^2 $}.$$
\end{itemize}  
In particular,  for  the special case  $P(t)=a t_1t_2\in \mathbb{R}[t_1,t_2]$,
Garaev \cite{G} observed that  there exists   $a\in \mathbb{R}$ such that
\begin{align}\label{0kmm}
\lim_{N\rightarrow\infty}\left| \sum_{t\in R(N,N)}\frac{e^{2\pi ia t_1t_2}}{t_1t_2}\right|=\infty.
\end{align}
See also  Oskolkov  \cite{O}.  As an analogue of (\ref{11}) in the   multi-parameter setting, instead of considering $P\in  \mathbb{R}[t_1,t_2]$, we may ask for  conditions  on  vector polynomials  
$\mathcal{P}(t)=(P_1(t),\cdots,P_d(t))$ with $P_\nu(t)\in \mathbb{Z}[t_1,t_2] $, which determine   the following uniform boundedness: 
\begin{align}\label{0188}
\sup_{(N_1,N_2)\in \mathbb{N}^2,\xi\in \mathbb{R}^d}\left| \sum_{t\in R(N_1,N_2)}\frac{e^{2\pi i\xi\cdot \mathcal{P}(t)}}{t_1t_2}\right|\le C.
\end{align}
This generalizes a straightforward two-parameter extension of (\ref{11}), stated as $$ \sup_{N_1,N_2,\text{all coefficients of $P$}} \left|\sum_{t\in R(N_1,N_2)}\frac{e^{2\pi iP(t_1,t_2)}}{t_1t_2}\right|<C\ \text{where $P(t_1,t_2)\in \mathbb{R}[t_1,t_2]$}. $$ This sets up a framework for understanding the conditions under which such uniform boundedness holds in the multi-parameter setting.
In this paper, we address the graph case    $\mathcal{P}(t)=(t_1,t_2,P(t_1,t_2))$ of  (\ref{0188}), defined  as 
\begin{align}\label{12}
H^{\rm{discrete}}_{(N_1,N_2)}(\xi):=\sum_{t\in R(N_1,N_2)}  \frac{e^{-2\pi i ( \xi_1t_1+\xi_2t_2+\xi_3P(t_1,t_2))}}{t_1  t_2}\ \text{for}\ \xi=(\xi_1,\xi_2,\xi_3)\in\mathbb{R}^{3}.
\end{align} 
We aim to determine   the necessary and sufficient condition  for the  boundedness of the discrete multiplier $H^{\rm{discrete}}_{(N_1,N_2)}(\xi)$ uniformly in $\xi$ and $N_1,N_2$.

  \subsection{Main Results}
 Given a two variable polynomial  $P(t_1,t_2)=\sum_{\mathfrak{m}\in\Lambda(P)} c_{\mathfrak{m}} t^{\mathfrak{m}},$ 
 we define a backward Newton polyhedron associated with a  domain $\{t: |t_1|,|t_2|\ge 1\}$ by $$\mathcal{N}={\bf conv}\left(\bigcup_{ \mathfrak{m}\in \Lambda(P)} \mathfrak{m}-\mathbb{R}_+^2 \right)\ \text{
where ${\bf conv}(A)$ indicates the convex hull of   $A$.}$$
\begin{main}
Let $P(t_1,t_2)\in \mathbb{Z}[t_1,t_2]$. Then  there is a constant $C>0$  such that
\begin{align}\label{132}
&\sup_{(N_1,N_2)\in \mathbb{N}^2,\xi \in \mathbb{R}^3}|H^{\rm{discrete}}_{(N_1,N_2)}(\xi_1,\xi_2,\xi_3)|\le C\ \text{if and only if }\nonumber\\
&\qquad\qquad \text{every vertex of   $\mathcal{N}$ has at least one even component.}
\end{align}
Note that the constant $C>0 $ is independent of $N_1,N_2,\xi$, but may depend on the coefficients $c_{\mathfrak{m}}$  of $P(t) $.
Moreover, the above statement holds for $\xi=(0,0,\xi_3)$ as
\begin{align*}
&\sup_{(N_1,N_2)\in \mathbb{N}^2,\xi_3\in \mathbb{R}}|H^{\rm{discrete}}_{(N_1,N_2)}(0,0,\xi_3)|\le C\ \text{if and only if (\ref{132}) holds.} 
\end{align*}

Furthermore,
 if the above evenness condition of $\mathcal{N}$ of (\ref{132}) holds,  then we obtain that
 \begin{align}\label{15p}
 &	H^{\rm{discrete}}(\xi):=\lim_{N_1,N_2\rightarrow \infty}H^{\rm{discrete}}_{(N_1,N_2)}(\xi)\ \text{exists and is uniformly bounded in $\xi\in\mathbb{R}^{3}$. }
 \end{align}  
 \end{main}

\begin{remark}
Garaev's divergence result in (\ref{0kmm}) indicates that  the limit $H^{\rm{discrete}}(\xi)$ in (\ref{15p}) may not exist for some $\xi\in\mathbb{R}^3$, say $(0,0,\xi_3)$, if any vertex of $\mathcal{N}$ has components that are both odd.    
\end{remark}

\begin{definition}\label{def11}
Let   $P(t) \in \mathbb{Z}[t_1,t_2]$. For each $(N_1,N_2)\in \mathbb{N}^2$,  set the operator $f\in \mathcal{S}(\mathbb{R}^{3})\rightarrow {\bf H}^{\rm{discrete}}_{(N_1,N_2)}(f) $ by assigning the   summation to each $x=(x_1,x_2,x_3)\in \mathbb{Z}^{3}$,
\begin{align}\label{18p}
{\bf H}^{\rm{discrete}}_{(N_1,N_2)}(f)(x) :=\sum_{t\in R(N_1,N_2)}  \frac{f(x_1-t_1,x_2-t_2,x_3-P(t))}{t_1t_2}.
\end{align}
Then we use the Fourier inversion formula for $f\in \mathcal{S}(\mathbb{R}^{3})$ to have
$${\bf H}^{\rm{discrete}}_{(N_1,N_2)}(f)(x) = [H^{\rm{discrete}}_{(N_1,N_2)}]^{\vee}*f(x)$$
where  $H^{\rm{discrete}}_{(N_1,N_2)}$ is defined in (\ref{12}). Then we are able to  
  define the discrete double Hilbert transform ${\bf H}^{\rm{discrete}}(f)$ for all $f\in\mathcal{S}(\mathbb{R}^3)$ by taking its value at $x$ is given by
 $${\bf H}^{\rm{discrete}}(f)(x):=\lim_{N_1,N_2\rightarrow \infty} {\bf H}^{\rm{discrete}}_{(N_1,N_2)}(f)(x).$$
Here the pointwise convergence follows from the fact that  the RHS of (\ref{18p})   is rapidly decreasing in $|x_1-t_1|$ and $|x_2-t_2|$ for fixed $x_1,x_2$, regardless that $\mathcal{N}$  satisfies (\ref{132}) or not. 
\end{definition}
If every vertex  $(m_1,m_2)$ in $\mathcal{N}$ has at least one even component, then we shall see that the Fourier multiplier of ${\bf H}^{\rm{discrete}}$ is $H^{\rm{discrete}}$   in  (\ref{15p}), while the existence of a vertex $ (odd,odd)$ in $\mathcal{N}$ may not 
guarantees  that  $H^{\rm{discrete}}$ is the multiplier of ${\bf H}^{\rm{discrete}}$. So we state the theorem regarding ${\bf H}^{\rm{discrete}}$ under the evenness hypothesis:
\begin{main}\label{main} Let $P(t_1,t_2)\in \mathbb{Z}^2[t_1,t_2]$. Suppose that (\ref{132}) holds. Then for any
  $p\in (1,\infty)$, we have $C=C(p,P)$ depending on coefficients of  $P(t_1,t_2)$ 
 such that
 $$\|
 {\bf H}^{\rm{discrete}}f\|_{\ell^p(\mathbb{Z}^3)}\le C  \|
  f\|_{\ell^p(\mathbb{Z}^3)} \ \text{for all  $f\in \ell^p(\mathbb{Z}^3)$}.\ $$  
\end{main}

 \begin{remark}
In Theorem 1.16 of \cite{B4} concerning the multi-parameter ergodic theorem associated with   the graph  case $(t_1,t_2,P(t_1,t_2))$, they assumed that
 the degree of $P$ with respect to each variable $t_1$ and $t_2$ is greater than $1$.  So,  they suggest a model example of the phase $ [\xi_1 t_1+\xi_3t_1t_2^2]$ below Conjecture 1.22 of \cite{B4}, which breaks their degree assumption since $P(t_1,t_2)=t_1t_2^2$ has degree 1 with respect to $t_1$. In this paper, without this degree  constraint,  we successfully  handle the sum   $\sum \frac{e^{2\pi i( \xi_1 t_1+\xi_2t_2+\xi_3P(t_1,t_2))}}{t_1t_2}$. To accomplish this, we introduce the double arc method  in Section 3.2 and  apply it to the most challenging part of the estimation in Section \ref{Sec72}.
\end{remark}

 {\bf Organization}.
     In Section \ref{Sec2}, we define the Newton polyhedron   and  state the  results of the continuous multiple Hilbert transforms along polynomial surfaces. Next, we decompose  the multiplier $H^{\rm{discrete}}_{(N_1,N_2)}$ in terms of its dual faces. In Section \ref{Sec3}, 
   we introduce the two-parameter circle method and outline the proof of the main theorems. In Section \ref{Sec4}, we discuss the two-parameter Weyl sum for the minor arc and give a proof for the minor arc estimate. In Section \ref{Sec500}, we estimate the average Gauss sum and  utilize it for proving the major-minor arc estimate.     In Section \ref{Sec72},
 we treat the hard  case $P(t)=t_1R_1(t_2)+R_0(t_2)$.
   In Section \ref{Sec7}, we prove the estimate for major arcs ($\sharp$ and $\flat$). In Section \ref{Sec8}. 
    we prove the $\ell^p$ boundedness of ${\bf H}^{\rm{discrete}}$ by using the Ionescu--Wainger multiplier method.  In Section \ref{Sec9}, we give   a necessity proof.\\

{\bf Notation}.
 Given two scalars $a,b$,  write $a\lesssim b$  if $a\le Cb$ for some  $C>0$ depending only on the given polynomial $P$.   Notice the bounds involved in $ \lesssim$  are independent of $\xi\in\mathbb{R}^3$ and $j=(j_1,j_2)\in \mathbb{Z}_{\ge 0}^2$.  In additions,
  denote $0\le a\ll b$ if $a/b$ is a sufficiently small number compared with 1. Note that   our positive constants $\epsilon,\eta$ ($\epsilon,\eta\ll 1$) and    $c,C$ may be   different   line by line.\\

{\bf Acknowledgement}. 
The authors would like to express their deep gratitude to Professor Jim Wright for his valuable comments, which helped us address and correct certain flaws in the original draft of this paper.

\section{Newton polyhedron and Dual Face Decomposition}\label{Sec2} For a systematic study,  it is useful to know some terminology of convex geometry.
For further study, see Chapters 1 and 2 of \cite{F}, and Chapter 4 of \cite{K}.
\subsection{Newton Polyhedron}
\begin{definition}[Domains]\label{defi1}
Given a finite  subset $B\subset \mathbb{Q}^2$,   set the region 
\begin{align*} 
D_B:=\{x\in\mathbb{R}^2:   |x^{\mathfrak{b}}|\le 1\ \text{for all $\mathfrak{b}\in B$}\}.
\end{align*}
Here,   $x^{\mathfrak{b}}:=x^{b_1}_1 x^{b_2}_2$   for $\mathfrak{b}=(b_1,b_2)$ and $x_\nu^0:=1$ for   real $x_\nu $. Let ${\bf e}_1=(1,0),{\bf e}_2=(0,1)$. For instance,  the regions $ D_B$ are   $$ D_{\{{\bf e}_1, {\bf e}_2\}}=\{x:|x_\nu|\le 1\},\ \text{and}\ D_{\{-{\bf e}_1, -{\bf e}_2\}}=\{x:|x_\nu|\ge 1\}\ \text{and}\ D_{\{{\bf 0}\}}=\mathbb{R}^2.$$ 
  \end{definition}
\begin{definition}[$\rm{cone}(B)$ and its  dual  $\rm{cone}^{\vee}(B)$]\label{d22}
Given  $B\subset\mathbb{Q}^{2}$, we set      \begin{align*} 
\rm{cone}(B)&:= \left\{\sum_{\mathfrak{b}\in B}\alpha_{\mathfrak{b}}\mathfrak{b}\in \mathbb{R}^2: \alpha_{\mathfrak{b}}\ge 0 \right\} \ \text{and}\  
\rm{cone}^{\vee}(B) := \bigcap_{\mathfrak{b}\in\rm{cone}(B)} \{\mathfrak{q}\in \mathbb{R}^2:\langle\mathfrak{b},\mathfrak{q}\rangle\ge 0\}.\end{align*}  
\end{definition}
\noindent 
\begin{exam}\label{ex21}
Consider the model cones and their duals.
\begin{itemize}
\item
If $B=\{{\bf e}_1,{\bf e}_2\}$, then $\rm{cone}(B)=\mathbb{R}_+^2$ and $\rm{cone}^{\vee}(B)= \mathbb{R}_+^2$.
\item
If $B=\{-{\bf e}_1,-{\bf e}_2\}$,  then $\rm{cone}(B)=-\mathbb{R}_+^2$ and $ \rm{cone}^{\vee}(B)=-\mathbb{R}_+^2.$
\item If $B=\{{\bf 0}\}$, then $\rm{cone}(B)= \{{\bf 0}\}$ and $\rm{cone}^{\vee}(B)=\mathbb{R}^2$.
\end{itemize}
In this paper we shall  mainly work with  $D_B$ for $B=\{-{\bf e}_1,-{\bf e}_2\}$ and $\rm{cone}^{\vee}(B)=-\mathbb{R}_+^2$. 
\end{exam}

\begin{definition}
Let $P(x)$ be a polynomial  of $x\in \mathbb{R}^2$.    Associated with  a pair $(P,D_B)$,   we  define  the Newton polyhedron:
 \begin{align*}
  {\bf N}(P,D_B)&: ={\bf Ch}\left(\bigcup_{ \mathfrak{m}\in \Lambda(P)} \mathfrak{m}+ \rm{cone}(B)  \right)
\end{align*}
where $\Lambda(P)$ is the set of all exponents  of $P$ and ${\bf Ch}(A)$ is the convex hull of $A$.   
\end{definition}
\begin{exam}\label{ded281}
For $D_{\{{\bf e}_1,{\bf e}_2\}}=[-1,1]^2$, $ D_{\{-{\bf e}_1,-{\bf e}_2\}}=\{x: |x_\nu|\ge 1\}$ and $D_{\{{\bf 0}\}}= \mathbb{R}^2$, 
\begin{itemize} 
\item[(1)]
${\bf N}(P,D_{\{{\bf e}_1,{\bf e}_2\}}) ={\bf Ch}[\Lambda(P)+\rm{cone}({\bf e}_1,  {\bf e}_2)] $,
 \item[(2)]
${\bf N}(P, D_{\{-{\bf e}_1,-{\bf e}_2\}}) ={\bf Ch}\left[\Lambda(P)+
\rm{cone}(-{\bf e}_1,-{\bf e}_2)
\right]$,    denoted by $\mathcal{N}$ in Section 1.2.
\item[(3)]  ${\bf N}(P,
  D_{\{{\bf 0}\}}) ={\bf Ch}[\Lambda(P)]$.
  \end{itemize}  
\end{exam} 

\subsection{Continuous  Multiple Hilbert Transforms}
As a continuous version of the discrete multiplier $H^{\rm{discrete}}_{N_1,N_2}(\xi)$ for $\xi=(\xi_1,\xi_2,\xi_3)$, we define
\begin{align}\label{conour}
\mathcal{H}^{\Lambda(P)}_{N_1,N_2} (\xi_1,\xi_2,\xi_3)&:=\int_{1\le |x_1|\le N_1,1\le |x_2|\le N_2} e^{2\pi i( \xi_1x_1+\xi_2x_2+\xi_3 P(x_1,x_2))}  \frac{dx_1}{x_1} \frac{dx_2}{x_2}.
\end{align}
In 2000, Carbery, Wainger and Wright  \cite{CWW} proved that 
$$ \lim_{\epsilon_1,\epsilon_2\rightarrow 0}\int_{\epsilon_1<|t_1|<1}\int_{\epsilon_2<|t_2|<1}   e^{2\pi i\xi\cdot (t_1,t_{2}, P(t))}\frac{dt_1dt_2}{t_1 t_2}.$$
is bounded uniformly in $\xi$ if and only if every vertex in ${\bf N}(P,D_{\{{\bf e}_1,{\bf e}_2\}})$ has at least one even component.
Later, in 2008, Patel  \cite{P2}  established the global domain showing that the uniform boundedness in $\xi\in\mathbb{R}^3$ of
$$ \lim_{\epsilon_1,\epsilon_2\rightarrow 0}\int_{\epsilon_1<|t_1|<1/\epsilon_1}\int_{\epsilon_2<|t_2|<1/\epsilon_2}   e^{2\pi \xi\cdot (t_1,t_{2}, P(t))}\frac{dt_1dt_2}{t_1 t_2}\ \text{holds if and only if} $$
\begin{align}\label{22a}
 [\mathbb{F}\cap \Lambda(P)] \subset (2\mathbb{Z})\times \mathbb{Z}\ \text{or}\  \mathbb{Z}\times(2\mathbb{Z})\ \text{for $\mathbb{F}\in \mathcal{F}({\bf N}(P,D_{\{{\bf 0}\}}))  $ of $\text{rank}(\mathbb{F})\le 1$}
 \end{align}
 where $\text{rank}(\mathbb{F})$ is the number of linearly independent vectors in $\mathbb{F}$.
 Patel in \cite{P1} also expressed  the necessary and sufficient condition for the uniform boundedness in $\xi_3$ of
 $  \lim_{\epsilon_1,\epsilon_2\rightarrow 0}\int_{\epsilon_1<|t_1|<1}\int_{\epsilon_2<|t_2|<1}   e^{2\pi \xi_3P(t)}\frac{dt_1dt_2}{t_1 t_2} $. The condition  involves 
\begin{align}\label{23a}
\text{signs of coefficients $c_{\mathfrak{m}}$ of  $ t^{\mathfrak{m}}$ over all those vertices $\mathfrak{m}=(odd,odd)$.}
\end{align}
For  the  triple Hilbert transforms, we refer \cite{CWW2} and \cite{CHKY} in 2009.  In \cite{CWW2}, Carbey, Wainger and Wright observed that the  $L^p$ boundedness of   triple Hilbert transform along $(t,P(t))$  is not determined by  only its Newton polyhedron but also by coefficients of   $P$. In 2015,    one of the  authors  \cite{K}  outlined the geometric conditions regarding the sets $\Lambda_\nu$ of exponents in $P_\nu$ that determine  $$\sup_{N,\xi\in\mathbb{R}^d}\left|\int_{\prod\{t:|t_\nu|\le N_{\nu}\} } e^{2\pi i\xi\cdot (P_1(t),\cdots,P_d(t))}\frac{dt_1}{t_1}\cdots\frac{dt_n}{t_n}\right|\le C\ \text{for  $C=C(P_1,\cdots,P_d)>0$.}$$ 
\subsection{Results of  Continuous Multipliers $\mathcal{H}^{\Lambda(P)}_{N_1N_2}(\xi)$} Recall (\ref{conour}).
The  condition  (\ref{22a}) involves   edges $\mathbb{F}$ passing through the origin. However,  our  backward Newton polyhedron ${\bf N}(P,D_{\{-{\bf e}_1,-{\bf e}_2\}})$  in (\ref{132})   does not have such edges.
 \begin{theorem}[Continuous Multiplier]\label{th21}
Given $P\in \mathbb{R}[t_1,t_2]$, then we have
\begin{align}\label{j4p}
&\sup_{(N_1,N_2)\in \mathbb{N}^2,\xi\in \mathbb{R}^3}\left|\mathcal{H}^{\Lambda(P)}_{N_1N_2}(\xi)\right|\le C\ \text{if and only if    (\ref{132}) holds,}
 \\
&\quad \sup_{\xi\in \mathbb{R}^3}\left|\lim_{N_1,N_2\rightarrow \infty} \mathcal{H}^{\Lambda(P)}_{N_1N_2}(\xi) \right|\le C\ \text{if and only if      (\ref{132}) holds.  }\label{j4}
  \end{align}
Here, the above constants $C>0$   may depend on coefficients of $P$.
  \end{theorem}
Next, one can treat the   continuous case of $(0,0,\xi_3)$ on   $D_{\{-{\bf e}_1,-{\bf e}_2\}}$:
\begin{theorem}\label{th22}
Given a polynomial $P$, we have 
\begin{align}\label{j4q}
  \sup_{(N_1,N_2)\in \mathbb{N}^2,\xi_3\in \mathbb{R}}\left|  \mathcal{H}^{\Lambda(P)}_{N_1N_2}(0,0,\xi_3) \right|\le C\ \text{if and only if    (\ref{132}) holds.  }
  \end{align}
\end{theorem}
It is observed that the evenness condition (\ref{132}) in (\ref{j4p}), (\ref{j4}) and (\ref{j4q}) aligns with that in Main Theorem 1. Additionally, we note that the condition (\ref{132}) differs from (\ref{23a}), which pertains to the coefficients $c_{\mathfrak{m}}$ of $t^{\mathfrak{m}}$, as (\ref{23a}) does not lead to the cancellation effect in the global integral of (\ref{j4q}). We will present a concise proof of the sufficient part of Theorems \ref{th21} and \ref{th22} in Section \ref{Sec52}.

\subsection{Dual Face Decomposition}
\begin{definition}\label{n45}[${\bf N}(P,D_B)$ in terms of $\rm{cone}^{\vee}(B)$]
Given a nonzero $\mathfrak{q} $ in $ \mathbb{R}^2$ and  $r\in \mathbb{R}$, we set a   line $\pi_{\mathfrak{q},r}$ and its  upper-half-space $\pi^+_{\mathfrak{q},r}$:
$$\pi_{\mathfrak{q},r}=\{x\in\mathbb{R}^2: \mathfrak{q}\cdot x= r\}\ \text{and}\ \pi^+_{\mathfrak{q},r}=\{x\in\mathbb{R}^2: \mathfrak{q}\cdot x\ge r\}. $$
Then one can see that there exists a set $\{\mathfrak{q}_1,\cdots,\mathfrak{q}_M \}$  of nonzero vectors such that 
 \begin{align} \label{0hj}
  {\bf N}(P,D_B)&: =\bigcap_{i=1}^M \pi_{\mathfrak{q}_i,r_i}^+\ \text{and}\    \rm{cone}^{\vee}(B)=\rm{cone}(\{\mathfrak{q}_1,\cdots,\mathfrak{q}_M \}).
\end{align}
\end{definition}
\begin{proposition}
Let $B=\{-{\bf e}_1,-{\bf e}_2\}$ and consider the Newton polyhedron in (\ref{0hj}). Then
all faces  in ${\bf N}(P,D_B)$ and their dual faces are represented  in terms of $\{\pi_{\mathfrak{q}_i,r_i}\}_{i=1}^M$:
\begin{itemize}
\item Edges (facets):\ $\mathbb{F}_i=\pi_{\mathfrak{q}_i,r_i}\cap {\bf N}(P,D_B)$ and its dual edge  $\mathbb{F}_i^*=\rm{cone}(\mathfrak{q}_i)$
\item  Vertices:
$\mathbb{F}_{i,i+1}=\pi_{\mathfrak{q}_i,r_i}\cap  \pi_{\mathfrak{q}_{i+1},r_{i+1}}$ and its dual vertex: $\mathbb{F}_{i,i+1}^*=\rm{cone}(\mathfrak{q}_i,\mathfrak{q}_{i+1})$.
\end{itemize}
Moreover, it leads the dual-face-decomposition of the whole dual  $\rm{cone}^{\vee}(B)$:
\begin{align}\label{cd1}
\rm{cone}^{\vee}(B)=\bigcup_{i=1}^M \mathbb{F}_{i,i+1}^{*}.
\end{align}
Denote the set of all edges or vertices by $\mathcal{F}^1({\bf N}(P,D_B))$ or $\mathcal{F}^0({\bf N}(P,D_B))$ respectively.   
\end{proposition}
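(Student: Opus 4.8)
The whole statement is the standard face / normal-cone duality for the $2$-dimensional polyhedron $N:={\bf N}(P,D_B)$, so the plan is to set that dictionary up carefully --- in the terminology of Chapters 1--2 of \cite{F} and Chapter 4 of \cite{K} --- and then read off each assertion; no analysis is involved. First I would record the structural facts about $N$. Since $\Lambda(P)$ is finite, $N={\bf Ch}(\Lambda(P))+\rm{cone}(B)$ is the Minkowski sum of a polytope with the polyhedral cone $\rm{cone}(B)=-\mathbb{R}_+^2$, hence by Minkowski--Weyl a genuine closed convex polyhedron whose recession cone is $\rm{cone}(B)$; and since $-\mathbb{R}_+^2$ is pointed, $N$ is line-free, so it has vertices and its proper faces are exactly its edges $\mathbb{F}_i$ (one for each non-redundant half-space in (\ref{0hj})) and its vertices, with exactly two edges meeting at each vertex. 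I would also fix the convention: for $\mathfrak{q}\ne 0$, the functional $\langle\mathfrak{q},\cdot\rangle$ is bounded below on $N$ iff $\langle\mathfrak{q},\mathfrak{b}\rangle\ge 0$ for all $\mathfrak{b}$ in the recession cone, i.e.\ iff $\mathfrak{q}\in\rm{cone}^{\vee}(B)$, in which case the minimum is attained on a face; and the dual face of a face $G$ is the cone $G^{*}:=\{\mathfrak{q}:\ \langle\mathfrak{q},x\rangle=\min_{y\in N}\langle\mathfrak{q},y\rangle\text{ for all }x\in G\}$. (That $\mathfrak{q}_i$ is an \emph{inner} normal of $\mathbb{F}_i$ means precisely that $\mathbb{F}_i$ is the minimizing face of $\langle\mathfrak{q}_i,\cdot\rangle$.)

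Next I would compute the dual faces. For an edge, any $\mathfrak{q}$ whose minimum over $N$ is attained on all of the $1$-dimensional set $\mathbb{F}_i$ must be constant along $\mathbb{F}_i$, hence proportional to $\mathfrak{q}_i$, and the sign is forced by ``minimum, not maximum''; so $\mathbb{F}_i^{*}=\rm{cone}(\mathfrak{q}_i)$. For a vertex $v=\mathbb{F}_{i,i+1}=\pi_{\mathfrak{q}_i,r_i}\cap\pi_{\mathfrak{q}_{i+1},r_{i+1}}$, I would pass to the tangent cone $T_vN=\rm{cone}(u_i,u_{i+1})$, with $u_i,u_{i+1}$ the directions of the two edges of $N$ issuing from $v$; then $\langle\mathfrak{q},\cdot\rangle$ attains its minimum over $N$ at $v$ iff it is nonnegative on $T_vN$, i.e.\ iff $\mathfrak{q}\in\rm{cone}(u_i,u_{i+1})^{\vee}$. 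Since $\mathfrak{q}_i\perp u_i$, $\mathfrak{q}_{i+1}\perp u_{i+1}$, and $\langle\mathfrak{q}_i,u_{i+1}\rangle,\langle\mathfrak{q}_{i+1},u_i\rangle>0$ (the latter because $u_{i+1}$ points from $v$ into $N\setminus\mathbb{F}_i$, where $\langle\mathfrak{q}_i,\cdot\rangle>r_i$), the rays $\rm{cone}(\mathfrak{q}_i)$ and $\rm{cone}(\mathfrak{q}_{i+1})$ are exactly the two extreme rays of the planar cone $\rm{cone}(u_i,u_{i+1})^{\vee}$, giving $\mathbb{F}_{i,i+1}^{*}=\rm{cone}(\mathfrak{q}_i,\mathfrak{q}_{i+1})$.

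For the decomposition (\ref{cd1}), the inclusion $\bigcup_i\mathbb{F}_{i,i+1}^{*}\subseteq\rm{cone}^{\vee}(B)$ is clear since each $\mathfrak{q}_j\in\rm{cone}^{\vee}(B)$ and the latter is a convex cone; conversely, for $\mathfrak{q}\in\rm{cone}^{\vee}(B)\setminus\{0\}$ the minimum of $\langle\mathfrak{q},\cdot\rangle$ over $N$ is attained on a face, which contains a vertex $\mathbb{F}_{i,i+1}$ (the minimal faces of a line-free polyhedron are its vertices), so $\mathfrak{q}\in\mathbb{F}_{i,i+1}^{*}$. Ordering $\mathfrak{q}_1,\dots,\mathfrak{q}_M$ by argument inside $\rm{cone}^{\vee}(B)$, the full-dimensional cones $\rm{cone}(\mathfrak{q}_i,\mathfrak{q}_{i+1})$ then tile $\rm{cone}^{\vee}(B)$ with consecutive ones overlapping exactly along the ray $\rm{cone}(\mathfrak{q}_{i+1})=\mathbb{F}_{i+1}^{*}$, which is precisely why excising those rays as prescribed renders the pieces disjoint. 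I do not anticipate a real obstacle: the argument is formal convex geometry, and the only things needing care are the sign and orientation bookkeeping and the two facts special to the plane used above --- that each vertex lies on exactly two facets, and that the two incident facet normals are exactly the extreme rays of that vertex's normal cone.
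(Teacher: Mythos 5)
The paper itself offers no proof of this proposition; it simply declares the statement intuitive and points to Fulton \cite{F} and Proposition 4.3 of \cite{K}. What you have written is a correct, self-contained version of exactly the standard argument those references contain, so there is no genuine divergence in method to report --- you are supplying the omitted details rather than taking a different route. Your structure is the right one: identify $N={\bf Ch}(\Lambda(P))+\rm{cone}(B)$ as a line-free closed polyhedron with recession cone $\rm{cone}(B)=-\mathbb{R}_+^2$ (so minima of $\langle\mathfrak{q},\cdot\rangle$ over $N$ exist precisely for $\mathfrak{q}\in\rm{cone}^{\vee}(B)$ and are attained at faces), then read off that the normal cone of an edge is the single ray on its inner normal, that the normal cone of a vertex is the two-dimensional cone spanned by the two incident edge normals (this is where you use the planar fact that exactly two facets meet at a vertex), and finally that these vertex normal cones cover $\rm{cone}^{\vee}(B)$ because every bounded-below linear functional achieves its minimum at some vertex. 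Your sign bookkeeping (``inner'' normals, ``minimum not maximum,'' the positivity $\langle\mathfrak{q}_i,u_{i+1}\rangle>0$) is the part that actually needs care and you handle it correctly. The one thing I would tighten is the phrase ``tangent cone $T_vN=\rm{cone}(u_i,u_{i+1})$'': you mean the cone of feasible directions at $v$, i.e.\ the tangent cone translated so that $v$ sits at the origin; as written the equation is off by the translate $v$, though the ensuing duality computation makes clear you are using the translated cone. With that cosmetic fix the argument is complete and matches what the cited references would produce.
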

\begin{proof}
  For its formal proof, we refer  \cite{F}  and Proposition 4.3 of \cite{K}.
\end{proof}
Let $\mathbb{Z}_+$ denote the set of non-negative integers.  For $j=(j_1,j_2)\in \mathbb{Z}_+^2$,  we define     
\begin{align}\label{8b}
 H_{j}^{\rm{discrete}}(\xi_1,\xi_2,\xi_3):=\sum_{t_1\sim 2^{j_1},t_2\sim 2^{j_2}} e^{-2\pi i(\xi_1t_1+\xi_2t_2+\xi_3P(t_1,t_2))}\frac{1}{t_1}\frac{1}{t_2}.
\end{align}
Here $t_\nu\sim 2^{j_\nu}$ indicates $2^{j_\nu-1}<|t_\nu|\le 2^{j_\nu}$ for $\nu=1,2$. 
For sufficiency part of Main Theorem 1, we shall prove that for some $C>0$ independent of $\xi$ and $N=(N_1,N_2)$,
\begin{align}\label{au5}
\sum_{j\in \mathbb{Z}_+^2\ \text{and} \ 1\le 2^{j_1}\le N_1,1\le 2^{j_2}\le N_2}| H_{j}^{\rm{discrete}}(\xi_1,\xi_2,\xi_3)  |\le C.
\end{align}
This implies (i) uniform boundedness and (ii) pointwise convergence of 
$$ \lim_{N_1,N_2\rightarrow\infty} H_{(N_1,N_2)}^{\rm{discrete}}(\xi_1,\xi_2,\xi_3) \ \text{for every}\ (\xi_1,\xi_2,\xi_3)\in  \mathbb{R}^3.$$
\begin{lemma}\label{lemdc}[Dual Face Decomposition]
 Recall that $\mathbb{F}^*$ is a dual face of $\mathbb{F}\in \mathcal{F}({\bf N}(P,D_B))$ where $B=\{-{\bf e}_1,-{\bf e}_2\}$. Then it holds that 
\begin{align*}
LHS \ \text{in (\ref{au5})}\le  \sum_{\mathbb{F}\in\mathcal{F}^0({\bf N}(P,D_B))}
 \sum_{j\in\mathbb{Z}^2(\mathbb{F}) }\left|H_{j}^{\rm{discrete}}(\xi_1,\xi_2,\xi_3)\right|
\end{align*}
where
\begin{align}\label{au0}
\mathbb{Z}^2(\mathbb{F}) :=\{j\in (-\mathbb{F}^{*}) \cap \mathbb{Z}^2: 1\le 2^{j_1}\le N_1\ \text{and}\ 1\le 2^{j_2}\le N_2\}.
\end{align}
\end{lemma}
\begin{proof}
Let $B=\{-{\bf e}_1,-{\bf e}_2\}$.  In the second line of Example \ref{ded281}, we  observed  $ [0,\infty)^2 =-\rm{cone}^{\vee}(B). $
By this with   (\ref{cd1}), one can decompose the exponent set $ [0,\infty)^2\cap \mathbb{Z}^2$ of $j$'s:
$$ [0,\infty)^2\cap \mathbb{Z}^2=-\rm{cone}^{\vee}(B)\cap \mathbb{Z}^2=\bigcup_{\mathbb{F}\in \mathcal{F}^0({\bf N}(P,D_B))} (-\mathbb{F}^*)\cap \mathbb{Z}^2. $$
The restriction to $1\le 2^{j_\nu}\le N_\nu$ on both sides implies the result of the above lemma.
\end{proof}
Henceforth, to establish (\ref{au5}) it suffices to prove that
\begin{align}\label{au1}
\sum_{j\in \mathbb{Z}^2(\mathbb{F})}\left| H_{j}^{\rm{discrete}}(\xi_1,\xi_2,\xi_3)\right|\le C\ \text{for a fixed   $\mathbb{F}\in \mathcal{F}^0({\bf N}(P,D_B))$}
\end{align}
since there exists only finitely many vertices $\mathbb{F}$ in ${\bf N}(P,D_B)$.
We will apply the following dominance property for the phase function.\begin{lemma}  
Suppose that  $\mathbb{F}=\{\mathfrak{m}\}\in\mathcal{F}^0( {\bf N}(P,D_B))$  and  $j\in (-\mathbb{F}^{*})\cap \mathbb{Z}^2 $ where  $\mathbb{F}^*$ represents dual faces in (\ref{cd1}). Then  it holds that     
 \begin{align}
  2^{j\cdot \mathfrak{m}}&\ge  2^{j\cdot \mathfrak{n}}\ \text{for all $\mathfrak{n}\in {\bf N}(P,D_B)$}\ \text{and}\ 2^{j\cdot \mathfrak{m}}  \ge C  |P(2^{j_1}x_1,2^{j_2}x_2)|   \ \text{for $x_1,x_2\sim 1$} \label{jq501}.
\end{align}
where  $C$   relies on  the coefficients of $P$, but independent of  $j$ and $x$.   
 \end{lemma}
   \begin{proof}[Proof of (\ref{jq501})]
We express a vertex $\mathbb{F}\in \mathcal{F}^0( {\bf N}(P,D_B) )$ and its dual face as
   $$\mathbb{F}=\bigcap_{\nu=1}^{2} \pi_{\mathfrak{q}_\nu,r_\nu} \ \text{and}\  \mathbb{F}^{*} =\rm{cone}(\mathfrak{q}_1,\ \mathfrak{q}_{2}).$$
Let $\mathfrak{m}\in \mathbb{F}\cap \Lambda(P)$ and $\mathfrak{n}\in {\bf N}(P,D_B)$.  Then
$
 \mathfrak{q}_\nu\cdot (\mathfrak{n}-\mathfrak{m})\ge 0\ \text{for all $\nu=1,2$} $ since
 $\mathfrak{m}\in  \pi_{\mathfrak{q}_\nu,r_\nu}$ and $\mathfrak{n}\in \pi_{\mathfrak{q}_\nu,r_\nu}^+$.
As  $j=\alpha_1(-\mathfrak{q}_1)+ \alpha_{2}(-\mathfrak{q}_{2}) \in (-\mathbb{F}^{*}) \cap \mathbb{Z}^2$   for  $\alpha_1, \alpha_{2}\ge0$, one obtains  $ j\cdot (\mathfrak{n}-\mathfrak{m})\le 0$. So, $ 2^{ j\cdot \mathfrak{m} }\ge 2^{ j\cdot \mathfrak{n}}$ showing the first inequality of (\ref{jq501}). The second inequality follows  from $\Lambda(P)\subset {\bf N}(P,D_B)$ and the triangle inequality 
  \begin{align*} 
  |P({\bf 2}^{j}x)|\le  \sum_{\mathfrak{n}\in \Lambda(P)} |c_{\mathfrak{n}}|2^{j\cdot\mathfrak{n}}|x^{\mathfrak{n}}|\le  \max\{|c_{\mathfrak{n}}|\}  \sum_{\mathfrak{n}\in \Lambda(P)}  2^{j\cdot\mathfrak{n}}
  \end{align*}
  where $\Lambda(P)$ is finite.
  \end{proof}
 
\subsection{Reduction}\label{Sec25n}
To establish $\sum_{j \in \mathbb{Z}^2(\mathbb{F})}|H_{j}^{\text{discrete}}(\xi)|\le C$ as stated in (\ref{au1}), we outline the fundamental reductions as follows.
\begin{itemize}
\item[(1)]  
Without loss of generality, we may assume    $j\in\mathbb{Z}^2(\mathbb{F})$  with $j_1\ge j_2$ in (\ref{au0}). 
\item[(2)]   We may assume that the set $\Lambda(P)$ of exponents of $P(t_1,t_2)$ does not include either ${\bf e}_1$ or ${\bf e}_2$.  Viewing $P(t_1,t_2) $ as
$\sum R_m(t_2)t_1^m$, for each $s\ge 0$,   we define
\begin{align*}
\ \ \ \ \ \mathbb{Z}^{s}[t_1,t_2]=\{P(t_1,t_2)\in \mathbb{Z}[t_1,t_2]: \text{$P(t_1,t_2)=\sum_{m=0}^{m=s}R_m(t_2)t_1^m$ with $R_s(t_2)\not\equiv 0$}\}.
\end{align*}
This set consists of polynomials   $P(t)$  with degree $s$ in the first variable $t_1$.
If $P(t_1,t_2)\in\mathbb{Z}^0[t_1,t_2]$, then $P(t_1,t_2)=R_0(t_2)$ and
$H_{j}^{\rm{discrete}}(\xi)\equiv 0$. So,
by excluding    $\mathbb{Z}^0[t_1,t_2]$ from   $\mathbb{Z}[t_1,t_2]$ we partition \begin{align*}
 \mathbb{Z}[t_1,t_2]=\mathbb{Z}^1[t_1,t_2]\cup \mathbb{Z}^{\ge 2}[t_1,t_2]\ \text{where}\
\mathbb{Z}^{\ge 2}[t_1,t_2]=\bigcup_{s\ge 2}\mathbb{Z}^s[t_1,t_2]
\end{align*}
and handle $P(t)$ in $\mathbb{Z}^1[t_1,t_2]$ or $\mathbb{Z}^{\ge 2}[t_1,t_2]$ separately when proving (\ref{au1}). Thus,  our $P(t)$ contains a monomial $c_{mn}t_1^mt_2^n$ such that $m\ge 1$. By this
 combined with (\ref{jq501}), we can assume that
\begin{align*}
2^{j\cdot \mathfrak{m}}\ge 2^{j_1}\ \text{for all $j\in\mathbb{Z}^2(\mathbb{F})$ with $\mathfrak{m}=\mathbb{F}$.}
\end{align*}
 \item[(3)]

The single sum estimate $\sum_{j_1}|H_{j_1,j_2}^{\rm{discrete}}(\xi)| \le C$ holds uniformly in $j_2$ and $\xi$,  leading to the  one-parameter estimate
$$\sum_{t_2\sim 2^{j_2}}\frac{1}{|t_2|} \sum_{j_1}\left|\sum_{t_1\sim 2^{j_1}} \frac{e^{2\pi i(\xi_1t_1+\xi_2t_2+\xi_3P(t_1,t_2)})}{t_1}\right|\le C. $$ 
\item[(4)]  To estimate  the double sum of (\ref{au1}), we   restrict $2^{j_2}> C(P)$ as  follows
\begin{align*}
\sum_{j \in \mathbb{Z}^2_+}|H_{j}^{\rm{discrete}}(\xi)|&=\sum_{ 2^{j_2} >C(P)}\sum_{j_1\in \mathbb{Z}_+}|H_{j}^{\rm{discrete}}(\xi)|+\sum_{1\le 2^{j_2}\le C(P)}\sum_{j_1\in \mathbb{Z}_+}|H_{j}^{\rm{discrete}}(\xi)|  \\
&\le \sum_{2^{j_2} >C(P)}\sum_{j_1\in \mathbb{Z}_+}|H_{j}^{\rm{discrete}}(\xi)|+O(1)\ \text{by (3) above,}
\end{align*} 
where,  given $P(t)=\sum_{(m,n)\in\Lambda(P)} c_{mn}t_1^mt_2^n$, the constant $C(P)$ is defined by \begin{align}\label{0122}
	C (P):=  d^2\sum_{(m,n)\in\Lambda(P)} (|c_{mn}|+1)^{100}\max\left\{\left|\frac{c_{mn'}}{c_{mn}}\right|:  (m,n),(m,n')\in\Lambda(P)\right\}.
\end{align}
Write $P(t)=\sum R_m(t_2)t_1^m$ with $R_m(t_2)=\sum_n c_{mn}t_2^n$. Then, given non-constant $R_m(t_2)$, from the assumption $ 2^{j_2}\ge C(P)$, it follows that   for some $N\ge 1$, \begin{align}\label{0122m}
|R_m(t_2)|=|\sum_{n=0}^{n=N}c_{mn}t_2^n|\approx |c_{mN}2^{j_2N}| \ \text{for  $t_2\sim 2^{j_2}$}.\end{align}
 We  utilize this observation  for the estimate of  $j_1\ge 100dj_2$.
\item[(5)] Building on (4) above, we redefine the set of $j$ in (\ref{au0}) as:
$$ \mathbb{Z}^2(\mathbb{F})=\{j\in (-\mathbb{F}^*)\cap \mathbb{Z}^2:2^{j_1}\ge 2^{j_2}\ge C(P)\}. $$ 
For our estimate,  we partition $\mathbb{Z}^2(\mathbb{F})$ into  
$\mathbb{Z}^2_{\approx}(\mathbb{F})$ and $ \mathbb{Z}^2_{\gg}(\mathbb{F})$:
\begin{align*}
\mathbb{Z}^2_{\approx}(\mathbb{F})&:=\{j \in \mathbb{Z}^2(\mathbb{F}): j_1\le  100dj_2\},\\
  \mathbb{Z}^2_{\gg}(\mathbb{F})&:=\{j \in \mathbb{Z}^2(\mathbb{F}): j_1>  100dj_2\}.
  \end{align*}
\end{itemize}
 
\section{Two parameter Circle Method}\label{Sec3}
In this section, we introduce the frequency decomposition for our two-parameter-circle method and sketch the outline of our estimate.
\subsection{Dirichlet Approximation}
To compute the exponential sum $H_{j}^{\rm{discrete}}(\xi)$ in (\ref{12}), it suffices to  consider  $\xi\in [0,1)^3$ since  $P(t)\in \mathbb{Z}[t_1,t_2]$. To treat the graph case $\xi_1t_1+\xi_2t_2+\xi_3P(t)$,  we approximate the frequency $\xi_3 $ by $a_3/q$ by using Dirichlet approximation within the size $2^{-j\cdot \mathfrak{m}} 2^{j_1/10}$  whereas we use  uniform-sized  approximation  for 
 $\xi_1,\xi_2$   to  $a_1/q,a_2/q$  within the   size $1/(2q)$ until we establish the $\ell^2$ boundedness in Section 7.  However, for the $\ell^p$ estimate in  Section 8, we  will utilize   the   approximation to $\xi$ by $a/q$ with $a=(a_1,a_2,a_3)$ such that $\text{gcd}(a,q)=1$.

\begin{lemma}\label{lem001}
 Fix a vertex $\mathfrak{m}=(m_1, m_2)\in\mathbb{F}$.  Given $j\in \mathbb{Z}(\mathbb{F})$, we set   $ M_j:=2^{j\cdot \mathfrak{m}}2^{-\frac{1}{10}j_1}$ where $2^{j\cdot \mathfrak{m}}\ge 2^{j_1}$. Then given $M_j$ and  $\xi_3\in\mathbb{R}$,  there are $q\in \mathbb{Z} \cap [1,M_j]$ and $a_3\in \mathbb{Z}$:
\begin{align}\label{002}
\left|\xi_3-\frac{a_3}{q}\right|<\frac{1}{qM_j}\ \text{where}\ gcd(q,a_3)=1.
\end{align}
\end{lemma}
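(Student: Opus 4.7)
The assertion is precisely Dirichlet's simultaneous approximation theorem applied at scale $N_j$, so the plan is a three-step pigeonhole argument preceded by a check that $[1,N_j]$ is a nontrivial range. The only content added beyond the textbook statement is the specific choice of scale $N_j = 2^{j\cdot\mathfrak{m}}2^{-j_1/10}$ tied to the vertex $\mathfrak{m}$ of $\mathbb{F}$.

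First, I would verify $N_j\geq 1$ so that $\mathbb{Z}\cap[1,N_j]\neq\emptyset$. Since $\mathfrak{m}\in\mathbb{F}\cap\Lambda(P)$ and $j\in(-\mathbb{F}^{*})\cap\mathbb{Z}_+^2$, the domination property (\ref{jq501}) gives $2^{j\cdot\mathfrak{m}}\approx\sum_{\mathfrak{n}\in\Lambda(P)}2^{j\cdot\mathfrak{n}}$, so $j\cdot\mathfrak{m}$ is the maximal linear form on $\Lambda(P)$ evaluated at $j$. Combined with the reductions at the end of Section \ref{Sec2} (restriction to $j_1\geq j_2\geq p$ and $2^{j_2}\geq C(P)$ from (\ref{0122})), this ensures $j\cdot\mathfrak{m}$ grows at least linearly in $j_1$, leaving $N_j=2^{j\cdot\mathfrak{m}-j_1/10}$ large. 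This is the only place where the geometry of the Newton polyhedron enters.

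Second, I would run the classical Dirichlet pigeonhole at scale $N_j$. Let $Q=\lfloor N_j\rfloor$ and consider the $Q+1$ fractional parts $\{k\xi\}$ for $k=0,1,\dots,Q$ inside $[0,1)$. Partitioning $[0,1)$ into the $Q$ half-open subintervals $[i/Q,(i+1)/Q)$ and applying pigeonhole produces indices $0\leq k<k'\leq Q$ whose fractional parts lie in a common subinterval, so that $q:=k'-k\in[1,Q]\subset[1,N_j]$ and $a:=\lfloor k'\xi\rfloor-\lfloor k\xi\rfloor$ satisfy $|q\xi-a|<1/Q$. Since $Q\geq N_j/2$ once $N_j$ is large, the tiny slack $2^{-j_1/10}$ built into the definition of $N_j$ absorbs the loss $1/Q$ versus $1/N_j$, yielding the desired $|\xi-a/q|<1/(qN_j)$ (alternatively, one runs the argument with $\lceil N_j\rceil$ and shows that the extremal case $q=\lceil N_j\rceil$ can be avoided).

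Third, I would reduce to a coprime pair: if $d=\gcd(q,a)>1$, replace $(q,a)$ by $(q/d,a/d)$. The denominator strictly decreases, remains in $[1,N_j]$, the fraction is unchanged, and $1/(qN_j)=1/(dq'N_j)<1/(q'N_j)$ preserves strict inequality. The main (indeed only) obstacle is step one: securing the positivity, in fact the largeness, of $N_j$ on the relevant range of $j$. Everything else is the standard Dirichlet pigeonhole and a trivial gcd reduction.
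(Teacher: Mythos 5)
The paper's own proof is a one-line citation of Dirichlet's theorem, so your proposal is the same route, not a genuinely different one; but there is a gap in your Step 2. Pigeonholing the $Q+1=\lfloor N_j\rfloor+1$ fractional parts into $Q=\lfloor N_j\rfloor$ boxes of length $1/Q$ gives $|q\xi-a|<1/Q$, and since $Q\leq N_j$ this bound is $\geq 1/N_j$, i.e.\ the loss points in the \emph{wrong} direction. The ``slack in $N_j$'' remark does not repair this: dividing $1/Q\geq 1/N_j$ by $q$ keeps the inequality reversed regardless of how large $N_j$ is. The $\lceil N_j\rceil$ alternative can output $q=\lceil N_j\rceil>N_j$, and you do not supply the argument that this value is avoidable. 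The correct fix is the sharper gap form of Dirichlet: list the $\lfloor N_j\rfloor+1$ numbers $\{k\xi\}$, $k=0,\ldots,\lfloor N_j\rfloor$, in increasing order in $[0,1)$ (if two coincide we are done with $|q\xi-a|=0$); the $\lfloor N_j\rfloor+1$ successive gaps, including the wrap-around gap to $1$, sum to $1$, so some gap is $\leq 1/(\lfloor N_j\rfloor+1)$. This produces $q\in[1,\lfloor N_j\rfloor]\subset[1,N_j]$ and $a\in\mathbb{Z}$ with $|q\xi-a|\leq 1/(\lfloor N_j\rfloor+1)<1/N_j$, since $\lfloor N_j\rfloor+1>N_j$ always.

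Your Step 1 is a worthwhile sanity check that the paper omits, but the justification via (\ref{jq501}) is imprecise as written: that estimate only says $j\cdot\mathfrak{m}=\max_{\mathfrak{n}\in\Lambda(P)}j\cdot\mathfrak{n}$ (the proof in fact gives the exact inequality $j\cdot\mathfrak{m}\geq j\cdot\mathfrak{n}$ for all $\mathfrak{n}\in\Lambda(P)$), which by itself does not bound the maximum below by $j_1/10$. What one actually uses is that $\Lambda(P)$ contains some $\mathfrak{n}=(n_1,n_2)$ with $n_1\geq 1$ (true whenever $P$ genuinely depends on $t_1$; if $P$ depends only on $t_2$ the operator degenerates to a tensor product and this circle-method machinery is not needed). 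Then $j\cdot\mathfrak{m}\geq j\cdot\mathfrak{n}\geq j_1 n_1\geq j_1>j_1/10$ for $j\in\mathbb{Z}_+^2$, so $N_j=2^{j\cdot\mathfrak{m}-j_1/10}\geq 2^{9j_1/10}\geq 1$, which is the clean statement your Step 1 is after.
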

\begin{proof}
Apply the Dirichlet approximation to $\xi_3,M_j$ to obtain   $a_3,q$ satisfying (\ref{002}).
\end{proof}
\begin{definition}\label{df31}
We employ a smooth cutoff function  $\phi$ supported in $\{u\in\mathbb{R}^1:|u|\le 1\}$ for $\phi(u)\equiv 1$ in $|u|<\frac{1}{2}$ and $\phi^c=1-\phi$.  Next set
$\chi(x)=\phi(x/2)-\phi(x)$.
 Given $A\subset \mathbb{R}^n$ with $n=1,2$, we denote by $\chi_A$ the characteristic function on $A$.
\end{definition}
\begin{lemma}\label{le32}
Fix $j\in\mathbb{Z}(\mathbb{F})$.    Then   it holds that for every $\xi_3\in\mathbb{R}$,
\begin{align*}
\sum_{ 1\le q\le 2^{j_1/10}} \sum_{a_3: (a_3,q)=1} \phi\left(\frac{\xi_3-a_3/q }{2^{-j\cdot \mathfrak{m}}2^{j_1/10}}\right)&\le  1.
\end{align*}
If $1\le q\le 2^{j_1/10}$, then
\begin{align*}
\sum_{a_3:gcd(q,a_3)=1}\phi\left(\frac{\xi_3-a_3/q }{2^{-j\cdot \mathfrak{m}}2^{j_1/10}}\right)&\le\sum_{a_3:gcd(q,a_3)=1}\phi\left(\frac{\xi_3-a_3/q }{1/(10q^2)}\right) \le 1
\end{align*}
where the condition $\phi\left(\frac{\xi_3-a_3/q }{1/(10q^2)}\right)\ne 0$ for $q$ given by $q,q'$ and  $q<q'$, implies that  $5q<q'$. 
\end{lemma}
\begin{proof}
The first part follows from a small width $2^{-j\cdot\mathfrak{m}}2^{j_1/10}\le 2^{-9j_1/10}$ and a large difference $|a_3/q-a_3'/q|>1/(qq')\ge  2^{-j_1/5}$ for $a_3/q\ne a_3'/q'$. In the second part, the first inequality follows from $2^{-j\cdot \mathfrak{m}}2^{j_1/10}\le 1/(10q^2)$ and the second from $|a_3/q-a_3'/q|>1/q^2$. The lacunary property of $q$'s follows from $1/(qq')\le |\frac{a_3}{q}-\frac{a_3'}{q'}|\le \frac{1}{10q^2}+\frac{1}{(10q')^2}$.
\end{proof}
\begin{definition}  
Let $q$ be a positive integer.  Given  $ a_\nu\in \{1,\cdots,q\}$   with  $\nu=1,2$,  we denote by $\Gamma_{a_\nu/q}$ the characteristic functions  as $$\Gamma_{a_\nu/q}=\begin{cases}
 \chi_{ \left[\frac{a_\nu}{q}-\frac{1}{2q}, \frac{a_\nu}{q}+\frac{1}{2q}\right)}\ \text{if $a_\nu\in \{1,\cdots,q-1\}$ and}\\
 \chi_{\left[0,\frac{1}{2q}\right)\cup \left[1-\frac{1}{2q},1\right)}\ \text{if $a_\nu=q$.}
 \end{cases}
 $$ 
To decompose the first two frequency variables $(\xi_1,\xi_2) \in [0,1)^2$ of $\xi\in [0,1)^3$,  we use
\begin{align}\label{kse}
\chi_{[0,1)^2}(\xi_1,\xi_2)\equiv \sum_{a_1,a_2=1}^{q}\Gamma_{a_1/q}\left(\xi_1\right)\Gamma_{a_2/q}\left(\xi_2\right)
\end{align}
where we note that for each $(\xi_1,\xi_2)\in [0,1)^2$ and $q$, there is a unique lattice $(a_1,a_2)$:
\begin{align*} 
-\frac{1}{2q}\le 	 \xi_\nu-\frac{a_\nu}{q} <\frac{1}{2q}\   \ \text{where we also denote}\ \beta_\nu:=\xi_\nu-\frac{a_\nu}{q}(\nu=1,2).
\end{align*}  

\end{definition}
\subsection{Basic Decomposition}
In view of the first inequality in Lemma \ref{le32}, we are able to define the three cutoff functions:  \begin{align}\label{33}
\phi^{\rm{minor}}_j(\xi)&= 1-\sum_{1\le q\le 2^{j_1/10}} \sum_{a_3: (a_3,q)=1}\phi\left(\frac{\xi_3-a_3/q }{2^{-j\cdot \mathfrak{m}}2^{j_1/10}}\right)\nonumber\\
\phi^{\rm{major,minor}}_j(\xi)&= \sum_{2^{\eta j_2}\le q\le 2^{j_1/10}} \sum_{a_3: (a_3,q)=1}  \phi\left(\frac{\xi_3-a_3/q }{2^{-j\cdot \mathfrak{m}}2^{j_1/10}}\right)\\
\phi^{\rm{major}}_j(\xi)&= \sum_{ 1\le q\le 2^{\eta j_2}} \sum_{a_3: (a_3,q)=1} \phi\left(\frac{\xi_3-a_3/q }{2^{-j\cdot \mathfrak{m}}2^{j_1/10}}\right) \nonumber
\end{align} 
where we shall determine $\eta$ to be $10^{-3}$ later. From now, we refer to the supports of  $\phi^{\rm{minor}}_j,  \phi^{\rm{major,minor}}_j$ and $\phi^{\rm{major}}_j$ as   minor arc,  major-minor arc and  major arc, respectively. According to $\xi$ contained in the three arcs, we split $H_j^{\rm{discrete}}(\xi)$ as
\begin{align*}
H_j^{\rm{discrete}}(\xi)=H_j^{\rm{discrete}}(\xi) \phi^{\rm{minor}}_j(\xi)+H_j^{\rm{discrete}}(\xi) \phi^{\rm{major,minor}}_j(\xi)  +H_j^{\rm{discrete}}(\xi) \phi^{\rm{major}}_j(\xi).
\end{align*}
By multiplying  $\chi_{[0,1)^2}(\xi_1,\xi_2)= \sum_{a_1,a_2=1}^{q}\Gamma_{a_1/q}\left(\xi_1\right)\Gamma_{a_2/q}\left(\xi_2\right) $ of  (\ref{kse}),     we    rewrite  the summation over $a_3$ in (\ref{33}) as the summation over $a=(a_1,a_2,a_3)$:\begin{align}
\sum_{a_3: (a_3,q)=1}\phi\left(\frac{\xi_3-a_3/q }{2^{-j\cdot \mathfrak{m}}2^{j_1/10}}\right)&=\sum_{a: (a_3,q)=1}\phi_{j,q,a}(\xi)   \nonumber\\
\ \text{where}\ \phi_{j,q,a}(\xi) &:=\Gamma_{a_1/q}\left(\xi_1\right)\Gamma_{a_2/q}\left(\xi_2\right)\phi\left(\frac{\xi_3-a_3/q }{2^{-j\cdot \mathfrak{m}}2^{j_1/10}}\right). \label{36a}
\end{align}
To show (\ref{au1}), we shall show that for a fixed   $\mathbb{F}\in \mathcal{F}^0({\bf N}(P,D_B))$,
\begin{align*} 
\sum_{j\in \mathbb{Z}^2(\mathbb{F})}\left| H_{j}^{\rm{discrete}}(\xi)\right|\left( \phi^{\rm{minor}}_j(\xi)+ \phi^{\rm{major,minor}}_j(\xi) +\phi^{\rm{major}}_j(\xi) \right) \le C.
\end{align*}

\subsection{Sketch of Non-Major-Arc Estimate and Double Arc Method}
We shall prove
\begin{align}\label{nnmm}
\sum_{j_1:j\in \mathbb{Z}^2(\mathbb{F})}|H_j^{\rm{discrete}}(\xi) |
(\phi^{\rm{minor}}_j(\xi)+ \phi^{\rm{major,minor}}_j(\xi)) \le C2^{-cj_2}
\end{align}
in Sections 4 through 6.
For the minor arc case, we will make use of the Weyl-sum estimate. In 
 the major-minor arc case, we will apply the average Gauss-sum inequality expressed as \begin{align}\label{906}
\frac{1}{2^{j_2}}\sum_{t_2\sim 2^{j_2}}\left|\frac{1}{q}\sum_{t_1=1}^q e^{2\pi i\left(\frac{a_1}{q}t_1+\frac{a_3}{q}P(t_1,t_2)\right)}\right|=O(q^{-c}).\end{align} 
These estimates enables us to obtain  the main estimation in Sections 4 and 5:
\begin{align*}
&\sum_{j_1:j\in \mathbb{Z}^2(\mathbb{F})}|H_j^{\rm{discrete}}(\xi) (\phi^{\rm{minor}}_j(\xi)+ \phi^{\rm{major,minor}}_j(\xi)) |\le C2^{-cj_2} \ \text{for all $P\in \mathbb{Z}^{\ge 2}[t_1,t_2]$},\\
&\sum_{j_1:j\in \mathbb{Z}_{\approx}^2(\mathbb{F})}|H_j^{\rm{discrete}}(\xi) (\phi^{\rm{minor}}_j(\xi)+ \phi^{\rm{major,minor}}_j(\xi)) |\le C2^{-cj_2} \ \text{for all $P\in \mathbb{Z}^{1}[t_1,t_2]$}.
\end{align*}
Then there remains  the hard part: 
\begin{align}\label{kds}
\sum_{j_1:j\in \mathbb{Z}_{\gg}^2(\mathbb{F})}|H_j^{\rm{discrete}}(\xi) (\phi^{\rm{minor}}_j(\xi)+ \phi^{\rm{major,minor}}_j(\xi)) |\le C2^{-cj_2} \ \text{for $P\in \mathbb{Z}^1[t_1,t_2]$}.
\end{align}
This is the case which lacks both of the Weyl-sum estimate and the  average Gauss sum estimate (\ref{906}) (when $q\ge 2^{100dj_2}$). To overcome this,  
write $P\in \mathbb{Z}^{1}[t_1,t_2]$ as
\begin{align*} 
P(t)=t_1R_1(t_2)+R_0(t_2)
\end{align*}  
and  express the phase $\xi_1t_1+\xi_2t_2+\xi_3P(t)$ of $H_{j}^{\rm{discrete}}(\xi)$ as  the  polynomial of   $t_1$:
$$\xi(t_2)t_1+[\xi_2t_2+\xi_3R_0(t_2)]\ \text{where $\xi(t_2)=\xi_3R_1(t_2)+\xi_1$}.$$
We then use the  new Dirichlet arcs   to the coefficient $\xi(t_2)$ of $t_1$ so that
\begin{align}\label{memp}
\begin{cases}
\text{new arc}: |\xi(t_2)-\frac{a_j(t_2)}{q_j(t_2)}|<\frac{1}{q_j(t_2)2^{(1-\frac{1}{100})j_1}}\ \text{of}\ \xi(t_2) \\
\text{old arc}: |\xi_3-a_3/q|<\frac{1}{2^{(1-1/10)j_1}}\ \text{of}\ \xi_3.
\end{cases}
\end{align}
Subsequently, within the new arc, we observe that the exponential sum over $t_1$ exhibits a strong decay except when $q_j(t_2)=1$. This, together with  (\ref{memp}), allows us to determine
$$\sharp\{ t_2\sim 2^{j_2}\ \text{satisfying (\ref{memp}) and $q_j(t_2)=1$ for some $j_1$}\}\le\text{deg}(P),$$ which reduces the matters   to the one parameter case to obtain (\ref{kds}) in Section 6. \\
\subsection{Sketch of Major-Arc Estimate}
Based on   $|(\xi_3-a_3/q)2^{\mathfrak{m}\cdot j}/2^{\eta j_2}|$, we divide 
 $$H_j^{\rm{discrete}}(\xi) \phi^{\rm{major}}_j(\xi)=H_j^{\rm{discrete}}(\xi) \phi^{\rm{major,\sharp}}_j(\xi)+H_j^{\rm{discrete}}(\xi) \phi^{\rm{major,\flat}}_j(\xi)$$ where
\begin{align}\label{34p}
\phi^{\rm{major,\sharp}}_j(\xi)&= \sum_{ 1\le q\le 2^{\eta j_2}} \sum_{a_3: (a_3,q)=1}\phi_{j,q,a}(\xi)  \phi^c\left( \frac{ \left(\xi_3-\frac{a_3}{q}\right)2^{\mathfrak{m}\cdot j} }{2^{\eta j_2}} \right)\nonumber\\
\phi^{\rm{major,\flat}}_j(\xi)&= \sum_{ 1\le q\le 2^{\eta j_2}} \sum_{a_3: (a_3,q)=1}\phi_{j,q,a}(\xi) \phi\left( \frac{ \left(\xi_3-\frac{a_3}{q}\right)2^{\mathfrak{m}\cdot j} }{2^{\eta j_2}} \right).
\end{align}
We switch the above two discrete sums   to the related oscillatory integrals:
\begin{itemize}
\item[(1)]  Change $H_j^{\rm{discrete}}(\xi) \phi^{\rm{major,\sharp}}_j(\xi)$ to the Hilbert transforms  along curves and  exploit  a decay  $2^{-cj_2}$ arising from  the high frequency   $ (\xi_3-a_3/q) 2^{j\cdot \mathfrak{m}} \ge 2^{\eta j_2}$.
\item[(2)] Change $H_j^{\rm{discrete}}(\xi) \phi^{\rm{major,\flat}}_j(\xi)$ to the double Hilbert transforms    along surfaces if $ (\xi_3-a_3/q) 2^{j\cdot \mathfrak{m}} \le 2^{\eta j_2}$. This will be done  in Section \ref{Sec7}.
\end{itemize}

\begin{table} 
	\centering
	\caption{Layout of Two-Parameter Circle Method.}
	\label{t2}
	\begin{tabular}{c|c|c|c}
		\noalign{\smallskip}\noalign{\smallskip}
		\hline\hline
		Variation& Range of $q$ & Frequency $\beta_3:=\xi_3-\frac{a}{q} $ &Outcome \\
		\hline
		$\text{Major}^{\flat}$  & $ q<2^{\eta j_2} $ & Low $|\beta_3|2^{j\cdot \mathfrak{m}}<2^{\eta j_2} $ & $\int \frac{dt_1}{t_1}\frac{dt_2}{t_2} $\\
		\hline
		$\text{Major}^{\sharp}$&$q<2^{\eta j_2} $  &High $| \beta_3|2^{j\cdot \mathfrak{m}}\ge 2^{\eta j_2} $ & $\sum_{t_2}\frac{1}{t_2} \int \frac{1}{t_1}dt_1$ \\
		\hline
		Major-Minor & $2^{\eta j_2}\le q\le 2^{j_1/10} $ & $|\beta_3|2^{j\cdot \mathfrak{m}}<2^{j_1/10}$ &$\sum_{t_2}\frac{1}{t_2}\int \frac{1}{t_1}dt_1 $ \\
		\hline
		Minor &  $2^{j_1/10}  <q$ & $|\beta_3|2^{j\cdot \mathfrak{m}}<2^{j_1/10}$ &$\sum_{t_1,t_2} \frac{1}{t_1t_2} $\\
		\hline
		\hline
	\end{tabular}
\end{table} 
The scheme for  the above four arcs is listed in 
 Table \ref{t2}.

\subsection{Reduction to $t_1$-Integral and $t_2$-Sum}\label{Sec34}
 \begin{definition}
Define a smooth even cutoff function for each $\ell\ge 1$ in $ \mathbb{Z}$ as
\begin{align}\label{aria}
\chi_\ell(s)=\begin{cases} 1\ \text{if $2^{\ell-1}+1/2 \le |s|\le 2^{\ell}$}\\
0 \ \text{if $|s|\le  2^{\ell-1}$ or $ |s|\ge 2^{\ell}+1/2$}.
\end{cases} 
\end{align}
Note that $|\partial_s\chi_{\ell} |$ is supported on $\{s\in \mathbb{R}:|s|\in (2^{\ell-1},2^{\ell-1}+1/2)\cup (2^{\ell},2^{\ell}+1/2)\}$ and whose value is $\approx 1$.    
 We rewrite $H^{\rm{discrete}}_{j}(\xi)$ as  $$H^{\rm{discrete}}_{j}(\xi) =\sum_{t\in \mathbb{Z}^2} e^{-2\pi i ( \xi_1t_1+\xi_2t_2+\xi_3P(t_1,t_2))}\frac{\chi_{j_1}(t_1)\chi_{j_2}(t_2)}{t_1t_2}.$$ 
 \end{definition}
 Away from minor arc,    we  now represent  $H^{\rm{discrete}}_{j}(\xi)$ as the the $t_1$-integral and $t_2$-sum.
\begin{lemma}\label{lem002} 
 For   each fixed $t_2$, we  set
\begin{align}
\mathcal{H}_{j_1}^{t_2}(\beta_1,0,\beta_3)&:=\int e^{-2\pi i(\beta_1x_1+\beta_3 P(x_1,t_2))}\chi_{j_1}\left(x_1\right)\frac{dx_1}{x_1},\label{010}\\
S^{t_2}\left(\frac{(a_1,0,a_3)}{q}\right)&:=\frac{1}{q}\sum_{\ell_1=1}^q e^{-2\pi i \left(\frac{a_1}{q}\ell_1+\frac{a_3}{q}P(\ell_1,t_2)\right)}.\label{011}  
\end{align}
Suppose that  $1\le q\le 2^{j_1/10}$ and $\text{gcd}(q, a_3)=1$ and 
$$|\xi_1-a_1/q|\le 1/(2q)\ \text{and}\ | \xi_3-a_3/q|\le 2^{-j\cdot \mathfrak{m}}2^{j_1/10}. $$
  Then   $H_j^{\rm{discrete}}(\xi)$ in (\ref{8b}) is expressed as
\begin{align}
  H^{\rm{discrete}}_{j}(\xi)   =  \label{640} 
 \sum_{t_2\sim 2^{j_2}} \frac{e^{2\pi i\xi_2t_2}}{t_2}  S^{t_2}\left(\frac{(a_1,0,a_3)}{q}\right)\mathcal{H}_{j_1}^{t_2}(\beta_1,0,\beta_3) +O(2^{-8j_1/10})
\end{align}
where   $\xi_1= \beta_1+\frac{a_1}{q}, \xi_3= \beta_3+\frac{a_3}{q}.$
\end{lemma}

\begin{proof}
For each fixed $t_2$,  rewrite from (\ref{aria}),
 \begin{align}\label{008}
H^{\rm{discrete}}_{j}(\xi)= \sum_{t_2\sim 2^{j_2}} \frac{e^{2\pi i\xi_2t_2}}{t_2}\left(\sum_{t_1 \in \mathbb{Z}} e^{-2\pi i (\xi_1t_1+\xi_3 P(t_1,t_2))}\frac{\chi_{j_1}(t_1)}{t_1}\right)+O(2^{-j_1}).
 \end{align}
Change   variable $t_1=\mu_1q+\ell_1$ in the sum  over $t_1$ to write  
\begin{align}\label{0059}
&\sum_{t_1 \in \mathbb{Z}} e^{-2\pi i (\xi_1t_1+\xi_3 P(t_1,t_2))}\frac{\chi_{j_1}(t_1)}{t_1}\nonumber\\
&= \sum_{\ell_1=1}^q   e^{-2\pi i \left( \frac{a_1}{q}\ell_1+\frac{a_3}{q} P( \ell_1 ,t_2)\right)}     \sum_{\mu_1} e^{2\pi i \left(\beta_1(q\mu_1+\ell_1)+\beta_3 P((q\mu_1+\ell_1),t_2)\right)}\frac{\chi_{j_1}(q\mu_1+\ell_1)}{q\mu_1+\ell_1}.
\end{align}
Regard $\mu_1$ as continuous variable.  By (\ref{jq501}) with $|\beta_1|\le 1/(2q)$ and $|\beta_3|\le \frac{1}{2^{j\cdot \mathfrak{m}}2^{-j_1/10}}$    in the support condition $ \phi_{j,q,a}(\xi)  $ ($q \le 2^{j_1/10}$) in  the above,  one can compute
\begin{align*}
 |\partial_{\mu_1} \big(\beta_1(q\mu_1+\ell_1)+ \beta_3 P((q\mu_1+\ell_1),t_2)\big)|  
&\le \frac{1}{2}+ C \beta_3 q 2^{j\cdot \mathfrak{m}-j_1}  \le \frac{1}{2}+2^{-7j_1/10}.
\end{align*}
Thus under the condition $1\le q\le 2^{j_1/10}$, one can use the van der Corput theorem to replace the above   $\sum_{\mu_1}$ with $\int dx_1$. Next apply the change of variable $qx_1+\ell_1\rightarrow x_1$ in that integral as
\begin{align*}
 &\sum_{\mu_1} e^{-2\pi i \left(\beta_1 (q\mu_1+\ell_1)+ \beta_3 P(q\mu_1+\ell_1,t_2)\right)}\frac{\chi_{j_1}(q\mu_1+\ell_1)}{q\mu_1+\ell_1}\\
 & =\int e^{-2\pi i \left(\beta_1 (q\mu_1+\ell_1)+ \beta_3P( qx_1+\ell_1,t_2)\right)}\frac{\chi_{j_1}(q x_1+\ell_1)}{q x_1+\ell_1}dx_1+O( 2^{-  9j_1/10 })\\
 &=\frac{1}{q} \mathcal{H}^{t_2}_{j_1}(\beta_1,0,\beta_3) +O( 2^{-9 j_1/10 })  
\end{align*}
where the error  follows from
$\int \left|\frac{d}{dx_1} \left(\frac{\chi_{j_1}(q x_1+\ell_1)}{q x_1+\ell_1} \right)\right|dx=O( 2^{- 9 j_1/10 })$.  Thus, by invoking (\ref{010}),(\ref{011}) and (\ref{0059}), one has 
\begin{align*}
&\sum_{t_1 \in \mathbb{Z}} e^{-2\pi i (\xi_1t_1+\xi_3 P(t_1,t_2))}\frac{\chi_{j_1}(t_1)}{t_1}\\
&\qquad = \frac{1}{q}\sum_{\ell_1=1}^q   e^{-2\pi i \left(\frac{a_1}{q}\ell_1+  \frac{a_3}{q} P( \ell_1 ,t_2)\right)}   \mathcal{H}_{j_1}^{t_2}(\beta_1,0,\beta_3)  +O(2^{-8j_1/10}) \\
&\qquad = S^{t_2}\left(\frac{(a_1,0,a_3)}{q}\right)\mathcal{H}_{j_1}^{t_2}(\beta_1,0,\beta_3)+O(2^{-8j_1/10}).
\end{align*} 
Therefore, we insert this into (\ref{008}) to obtain  Lemma \ref{lem002}. 
\end{proof}

\section{Weyl Sum and Minor Arc Estimate}\label{Sec4}
\subsection{Two-Parameter Weyl Sum}
 \begin{proposition}\label{prop21}
Consider a polynomial $Q(t_1,t_2)  =\sum_{(m,n)\in \Lambda(Q)}  c_{mn}t_1^mt_2^n \in \mathbb{R}[t_1,t_2]$  with $\text{deg}(Q)=d$ and $\Lambda(Q)\not\subset \{0\}\times\mathbb{Z}$.
Suppose that there exists $ (m,n)\in \Lambda(Q) $ which is   a vertex $\mathbb{F}$ of $ {\bf N}(Q,D_{\{-{\bf e}_1,-{\bf e}_2\}})$,  with   $j\in (-\mathbb{F}^*)$  satisfying that $j_1\ge j_2\ge 1$, and there exists  a Dirichlet approximation pair $(q, a_{mn})$ to the coefficient $c_{mn}$ of $t_1^mt_2^n$ of $Q(t_1,t_2)$ and $\eta>0$:
\begin{align}\label{42fg}
|c_{mn}- \frac{a_{mn}}{q}|\le \frac{1}{q^2}\ \text{and}\ \text{gcd}(a_{mn},q)=1 \ \text{satisfying}
\ 2^{j_1\eta }\le q\le \frac{2^{j_1
		m}2^{j_2n}}{2^{j_1\eta}}. 
\end{align}
  Assume  that   $2^{j_1}\le 2^{100 d j_2}$.
Then there are $C,\delta$ that may depend on $d$ and  $\eta$ such that
 \begin{align}\label{023}
 \left|\sum_{t_1\sim 2^{j_1}}\sum_{t_2\sim 2^{j_2}}e^{2\pi i Q(t_1,t_2)}\right|\le C2^{-\delta j_1} 2^{j_1}2^{j_2}.
\end{align}
\end{proposition}
  \begin{remark}\label{rmk41}
Note that $t_1\sim 2^{j_1}$ and $t_2\sim 2^{j_2}$ in the above Weyl sum can be replaced with  $   [2^{j_{1-1}},2^{j_1}]\times [2^{j_{2}-1}, 2^{j_{2}}]$. Moreover, it can be replaced with $ [2^{j_{1}-1}, p]\times [2^{j_{2}-1}, q]$ for any $p\in [2^{j_{1}-1},2^{j_1}]$ and $q\in [2^{j_{2}-1},2^{j_2}]$.
\end{remark} 
\begin{remark}\label{rmk42}
Without comparability condition $2^{j_2}\le 2^{j_1}\le 2^{Kj_2}$ for some   $K>1$, the  decay estimate (\ref{023}) can fail. Take $c_{10}=2^{2j_2-j_1/2}$, $c_{12}=-2^{-j_1/2}$ and set $j_1\gg j_2$. Then $$\sum_{t_1\sim 2^{j_1},t_2\sim 2^{j_2}} e^{2\pi i (c_{10}t_1+c_{12}t_1t_2^2)}\approx \sum_{t_2=\pm 2^{j_2}} \sum_{2^{j_1-1}< |t_1|\le 2^{j_1}} e^{2\pi i (c_{10}+c_{12}t_2^2)t_1}=2^{j_1}\gg 2^{-\delta j_1} 2^{j_1}2^{j_2}.$$
\end{remark}
\subsection{Proof of  (\ref{023})}\label{Sec43}
To prove (\ref{023}), it suffices to consider the summation
over only positive $t_1,t_2$.
Let $(m,n)$ be a vertex of $  {\bf N}(Q,D_{\{-{\bf e}_1,-{\bf e}_2\}})$ and $j\in(-\mathbb{F}^*)$. Let $m=0$ such that $(0,n)$ is a vertex $\mathfrak{m}$ of $  {\bf N}(Q,D_{\{-{\bf e}_1,-{\bf e}_2\}})$. Then there is no element $(m',n')$ in $  {\bf N}(Q,D_{\{-{\bf e}_1,-{\bf e}_2\}})\setminus (0,n)$ with $n'\ge n$. To see this, observe that such a vector $(m',n')$  would satisfy  $(m',n')\cdot j>(0,n)\cdot j$ which is a contradiction to the first inequality of (\ref{jq501}).  Thus one can express $Q(t_1,t_2)=c_{0n}t_2^n +S_{n-1}(t_1)t_2^{n-1}+\cdots+S_1(t_1)t_1+S_0(t_1)$. Fix $t_1$ and apply the approximation hypothesis (\ref{42fg}) of the coefficient $c_{0n}$  to the one-parameter-Weyl sum result. Then one has
$\sum_{t_2\sim 2^{j_2}} e^{2\pi i Q(t_1,t_2)} =O(2^{j_2}2^{-\delta' j_2})$. This with  $2^{j_1}\le 2^{100dj_2}$ implies (\ref{023}).  So, we assume that $m\ge 1$.
Let $(m,n)=(1,0)$. Then one can see that $Q(t_1,t_2)$ is of the form $c_{10}t_1+R_0(t_2)$. For this case, use $\sum_{\ell=1}^q e^{2\pi i  \frac{a}{q}  \ell}=0$ and (\ref{42fg}) such that $c_{10}=\frac{a}{q}+\beta$ with $|\beta|\le 1/q^2$ and $2^{j_1\eta}<q<2^{j_1-j_1\eta}$ to have 
\begin{align}\label{43k}
 | \sum_{t_1\sim 2^{j_1}} e^{2\pi i c_{10}t_1}|&\le  q+\sum_{\mu=1}^{[\frac{2^{j_1}}{q}]}|\sum_{\ell=1}^q e^{2\pi i (\frac{a}{q}+\beta)\mu q} (e^{2\pi i (\frac{a}{q}+\beta) \ell}-e^{2\pi i  \frac{a}{q}  \ell})|\nonumber \\
&=q+O(\frac{2^{j_1}}{q})=O(2^{j_1} 2^{-  j_1\eta}).
\end{align}
Hence, it suffices to consider the vertices $(m,n)$ with $m\ge 1$ and $n\ge 1$.
  Define $D_{\ell}^1Q(t_1,t_2):=Q(t_{1}+\ell,t_{2})-Q(t_1,t_2)$ and $D_{r}^2Q(t_1,t_2):=Q(t_{1},t_{2}+r)-Q(t_1,t_2)$. Next, denote $D_{r_2}^2D_{r_1}^2=D^2_{r_1r_2}$. By switching the order of summations, observe that
	\begin{align}\label{0911}
		\left|\sum_{t_2\in [a,b]\cap \mathbb{Z}} e^{2\pi i Q(t_1,t_2)}\right|^2=\sum_{r\in [a-b,b-a]\cap \mathbb{Z}} \sum_{t_2\in [a-r,  b-r]\cap[a,b]\cap \mathbb{Z}}e^{2\pi i D_r^2Q(t_1,t_2)}.
	\end{align}
From now on, we omit $\mathbb{Z}$ in the subscript of the summation.
	For  $s\in \mathbb{N}$, let $L_{s,r}=[2^{s-1}-r,2^{s}-r]\cap[2^{s-1},2^{s}]\cap \mathbb{Z}$.  Then we rewrite the sum of (\ref{023}) as
\begin{align}\label{0302}
W:=\left|\sum_{t_1\sim 2^{j_1},t_2\sim 2^{j_2}}e^{2\pi i Q(t_1,t_2)}\right|=\left|\sum_{t_1\in L_{j_1,0}}\left(\sum_{t_2\in L_{j_2,0}} e^{2\pi i Q(t_1,t_2)}\right)\right|.
\end{align}

{\bf Differencing in $t_2$} Given $s\in \mathbb{N}$,  set $I_{s}=[-2^{s-1},2^{s-1}]\cap \mathbb{Z}$ and denote their   products by: $$I_{s}^i=\underbrace{I_{s}\times \cdots \times I_{s}}_{\text{$i$-factors}}\ \text{where}\ I_s^0=\{0\}.
$$
By repeating  the Schwarz's inequality for the summations $\sum_{t_1,(r_1,\cdots,r_i)}$   and  the differencing  (\ref{0911}), we  estimate $W$ of (\ref{0302})  as
	\begin{align*}
		W=&\left|\sum_{t_1\in L_{j_1,0}}\left(\sum_{t_2\in L_{j_2,0}} e^{2\pi i Q(t_1,t_2)}\right)\right|\le |L_{j_1,0}|^{1/2}\left|\sum_{t_1\in L_{j_1,0}}\sum_{r_1\in I_{j_2}}\left( \sum_{t_2\in L_{j_2,r_1}}e^{2\pi i D^2_{r_1}Q(t_1,t_2)}\right)\right|^{1/2}\\
		&\le |L_{j_1,0}|^{1/2}|L_{j_1,0}\times I_{j_2}|^{1/2^2}\left| \sum_{t_{1}\in L_{j_1,0}}\sum_{(r_1,r_2)\in I_{j_2}^2} \left(\sum_{t_2\in L_{j_2,r_1+r_2}}e^{2\pi i  D^2_{r_1r_2}Q(t_1,t_2)}\right)\right|^{1/2^2}\\
		&\le \cdots\le \prod_{i=1}^{n}|L_{j_1,0}\times I_{j_2}^{i-1}|^{1/2^{i}}\times |U|^{1/2^n} 
\end{align*}
where $U$ is given by
$$
 U :=  \sum_{t_{1} \in L_{j_1,0}}\sum_{(r_1,\cdots,r_n) \in I_{j_2}^n} \left(\sum_{t_2\in L_{j_2,r_1+r_2+\cdots+r_n}}e^{2\pi i  D^2_{r_1\cdots r_n}P(t_1,t_2)}\right).
$$
{\bf Differencing in $t_1$}. 
By changing the order of the summation $\sum_{t_1}\sum_{t_2}$ in $U$ above,   	\begin{align*}
		U = \sum_{(r_1,\cdots,r_n,t_{2}) \in \mathcal{I}^n_{j_2}} \left(\sum_{t_{1} \in L_{j_1,0}}e^{2\pi i  D^2_{r_1\cdots r_n}Q(t_1,t_2)}\right) 
	\end{align*}
where $\mathcal{I}^n_{j_2}=\{(r_1,\cdots,r_n,t_{2}): (r_1,\cdots,r_n)\in I^n_{j_2}\ \text{and}\ t_2\in L_{j_2,r_1+r_2+\cdots+r_n}\}$. 
Repeat the Schwarz's inequality for $\sum_{(r_1,\cdots,r_n,t_{2}),(\ell_1,\cdots,\ell_i)}$ and  differencing   $|\sum_{t_1}|^2$  like (\ref{0911}) to get
	\begin{align*}
		|U|&\le | \mathcal{I}^n_{j_2}|^{1/2}  \left|\sum_{(r_1,\cdots,r_n,t_{2}) \in \mathcal{I}^n_{j_2}}\sum_{\ell_1\in I_{j_1}} \left(\sum_{t_1\in L_{j_1,\ell_1}}e^{2\pi i  D_{\ell_1}^1D^2_{r_1\cdots r_n}Q(t_1,t_2)}\right)\right|^{1/2}\\
		& \le \cdots\le \prod_{i=1}^{m-1} |\mathcal{I}^n_{j_2} \times I_{j_1}^{i-1}|^{1/2^{i}}|V_{mn}|^{1/2^{m-1}}.
	\end{align*} 
Here, the main term $V_{mn}$ is given by
	\begin{align*}
		V_{mn} &:= \sum_{(r_1,\cdots,r_n,t_{2}),(\ell_1,\cdots,\ell_{m-1})\in  \mathcal{I}^n_{j_2}\times I_{j_1}^{m-1}}   \left(\sum_{t_1\in L_{j_1,\ell_1+\ell_2+\cdots+\ell_{m-1}}}e^{2\pi i  D^1_{\ell_1\cdots\ell_{m-1}}D^2_{r_1\cdots r_n}Q(t_1,t_2)}\right)
	\end{align*}
where for the case $m=1$,
	$$V_{1n}=\sum_{(r_1,\cdots,r_n,t_{2}) \in  \mathcal{I}^n_{j_2} }   \left(\sum_{t_1\in L_{j_1,0}}e^{2\pi i   D^2_{r_1\cdots r_n}Q(t_1,t_2)}\right)=U.$$
	Therefore, by combining the above differencings in $t_1$ and $t_2$,
	\begin{align*}
		&W\le   \prod_{i=1}^{n}|L_{j_1,0}\times I_{j_2}^{i-1}|^{1/2^{i}} \times \left(   \prod_{i=1}^{m-1}|\mathcal{I}^n_{j_2} \times I_{j_1}^{i-1}|^{1/2^{i}}\right)^{1/2^n} |V_{mn}|^{1/2^{m+n-1}}. 
	\end{align*}
  By using $\sum_{i=1}^{n}(i-1)/2^{i}=1-(n+1)/2^{n}$,   
	\begin{align*}
	&	\prod_{i=1}^{n}|L_{j_1,0}\times I_{j_2}^{i-1}|^{1/2^{i}} \times \left(   \prod_{i=1}^{m-1}|\mathcal{I}^n_{j_2} \times I_{j_1}^{i-1}|^{1/2^{i}}\right)^{1/2^n} 
		\le   C2^{j_1(1-\frac{m}{2^{m+n-1}})} 2^{j_2(1-\frac{n+1}{2^{m+n-1}})}.  
	\end{align*}
Then we have 
\begin{align}
W\le C|V_{mn}|^{1/2^{m+n-1}}2^{j_1(1-\frac{m}{2^{m+n-1}})} 2^{j_2(1-\frac{n+1}{2^{m+n-1}})} .\label{me1}
\end{align}
	By the geometry of the vertex $(m,n)$ in $ {\bf N}(Q,D_B) $,    we have  $M:=m!(n-1)!$ so  that
	\begin{align}\label{9sd}
	D^1_{\ell_1\cdots\ell_{m-1}}D^2_{r_1\cdots r_n}Q(t_1,t_2)=c_{mn} M( \ell_1\cdots\ell_{m-1}r_1\cdots r_{n} )t_1.
	\end{align}
Set  $\mathcal{R}(h):=\{(r_1,\cdots,r_n),(\ell_1,\cdots,\ell_{m-1})\in I^n_{j_2}\times I_{j_1}^{m-1}:h=\ell_1\cdots\ell_{m-1}r_1\cdots r_{n} M \}$ where $|h|\le M 2^{(m-1)j_1+nj_2}$.  Note that the   $\mathcal{R}(0)$ occurs when one of $r_1,\cdots,r_n$ or $\ell_1,\cdots,\ell_{m-1}$ is zero, which gives 
a gain of $2^{-j_1}$ or $2^{-j_2}$ when counting the number of $r$ and $\ell$. So it suffice to treat $\mathcal{R}(h)$ for $h\ne 0$.  According to $h$, we split the sum $V_{mn}$ as
	\begin{align*}
		&|V_{mn}| \le \sum_{t_{2}\in I_{j_2}} \sum_{h=-M2^{(m-1)j_1+nj_2}}^{ M2^{(m-1)j_1+nj_2} }  \sum_{(r_1,\cdots,r_n),(\ell_1,\cdots,\ell_{m-1})\in\mathcal{R}(h)} \left|\sum_{t_1\in L_{j_1,\ell_1+\ell_2+\cdots+\ell_{m-1}}}e^{2\pi i c_{mn}h t_1}\right|.
		\end{align*}
	As $|L_{j_1,\ell_1+\ell_2+\cdots+\ell_{m-1}}|\le 2^{j_1}$ with $c_{mn}=\frac{a_{mn}}{q}+\beta_{mn} $, there exists a $w_0$ such that	 
		\begin{align*}
		|V_{mn}|&\le  2^{j_2}  \max\left\{\frac{2M2^{(m-1)j_1+nj_2}}{q},1\right\} \sum_{ w_0q<h\le w_0q+q}|\mathcal{R}(h)|\min\left\{2^{j_1}, \frac{1}{  \| (\frac{a_{mn}}{q}+\beta_{mn})h\|} \right\}
\end{align*}
where    $\|x\|=\text{dist}(x,\mathbb{Z})$. 
Note $ |\mathcal{R}(h)|=O(h^{\epsilon'})=O(2^{j_1\epsilon/2})$ for  $\epsilon'\le \frac{\epsilon}{2(n+m-1)} \ll \eta$ where we take $\epsilon =\eta/(100d)$. By   $h\rightarrow qw_0+h$ for $1\le h\le q$ in the above, one can use the fact $c=\omega_0q\beta_{mn}$ is fixed and $|\beta_{mn}h|\le 1/q$ due to $|\beta_{mn}|\le 1/q^2$ to obtain the first two inequalities below
\begin{align}\label{as123}
		|V_{mn}|& \le   2^{j_2}   2^{\epsilon j_1/2} \max\left\{\frac{2M2^{(m-1)j_1+nj_2}}{q},1\right\} \sum_{ 1<h\le  q} \min\left\{2^{j_1}, \frac{1}{  \|c+ \frac{a_{mn}}{q}h+\beta_{mn}h\|} \right\}\notag\\ 
		    &\le C 2^{j_2}  2^{\epsilon j_1/2} \max\left\{\frac{2M2^{(m-1)j_1+nj_2}}{q},1\right\} \cdot (2^{j_1}+q\log q)\\
		    &\le C    2^{\epsilon j_1/2} \left( 2^{j_1m+j_2(n+1)} 2^{-j_1\eta/2} +2^{j_1+j_2}\right)\nonumber
	\end{align}
	since $2^{\eta j_1}<q<2^{j_1m+j_2n-\eta j_1}$.   By (\ref{as123}) and (\ref{me1}),  	\begin{align*}
		W&\le  C 2^{\epsilon  j_1/2^{m+n} } \left( 2^{j_1m+j_2(n+1)} 2^{-j_1\eta/2} +2^{j_1+j_2}  \right)^{1/2^{m+n-1}}  \times 2^{j_1(1-\frac{m}{2^{m+n-1}})} 2^{j_2(1-\frac{n+1}{2^{m+n-1}})}\\
		& \le C 2^{\epsilon  j_1/2^{m+n} }  \left(2^{ -\frac{\eta j_1}{ 2^{m+n} }  }   2^{j_1+j_2}   +    2^{j_1(1-m)/2^{m+n-1}} 2^{-j_2n/2^{m+n-1}}    2^{j_1+j_2}    \right)
		\end{align*}
		which is bounded by $2^{-cj_1}2^{j_1+j_2}$ for the case $m\ge 2$.		For the case $m=1$, one can see that the last line is bounded by $2^{-cj_1}2^{j_1+j_2}$ with		
		 $c=\frac{\eta}{200d\times 2^{1+n}} $ thanks to $2^{j_1}\le 2^{100 d   j_2}$ and $\epsilon=\eta/(100d)$.	
		 Therefore, we proved  (\ref{023}) for the case $2^{j_1}\le 2^{100 d   j_2}$.

\subsection{Minor Arc Estimate} 
Recall $ \mathbb{Z}^2(\mathbb{F})=\{j\in (-\mathbb{F}^*)\cap \mathbb{Z}^2:2^{j_1}\ge 2^{j_2}\ge C(P)\}$   in (\ref{0122}).  We claim that 
\begin{proposition}\label{minor arc}
Suppose that
(i) $P\in \mathbb{Z}^{\ge 2}[t_1,t_2]$ and $j\in \mathbb{Z}^2(\mathbb{F})$, or (ii) $P\in \mathbb{Z}^{\le 1}[t_1,t_2]$ and
 $j\in \mathbb{Z}^2_{\approx}(\mathbb{F})$.
Then one can find  $C>0$ and $c>0$   independent of $\xi $ such that
\begin{align}\label{0053}
&  |H^{\rm{discrete}}_{j}(\xi_1,\xi_2,\xi_3) |\phi^{\rm{minor}}_j(\xi)\le C2^{-c j_1}. 
\end{align}
 This implies that
  \begin{align}
 \sum_{j_1:j\in \mathbb{Z}^2(\mathbb{F})}  |H^{\rm{discrete}}_{j}(\xi_1,\xi_2,\xi_3) |\phi^{\rm{minor}}_j(\xi)&\le C2^{-cj_2}\ \text{if $P\in \mathbb{Z}^{\ge 2}[t_1,t_2]$},\nonumber\\
  \sum_{j_1:j\in \mathbb{Z}_{\approx}^2(\mathbb{F})}  |H^{\rm{discrete}}_{j}(\xi_1,\xi_2,\xi_3) |\phi^{\rm{minor}}_j(\xi)&\le C2^{-cj_2}\ \text{if $P\in \mathbb{Z}^{1}[t_1,t_2]$}. \label{414b}\end{align}
  \end{proposition}
   
  \begin{lemma}\label{4013}
If (i) or (ii) of Proposition \ref{minor arc} holds, then there are $C,c>0$ such that
\begin{align}\label{p44}
\phi^{\rm{minor}}_j(\xi)|\sum_{t_1\sim 2^{j_1}}\sum_{t_2\sim 2^{j_2}} e^{2\pi i (\xi_1t_1+\xi_2t_2+\xi_3 P(t_1,t_2))}|\le C2^{j_1+j_2} 2^{-cj_1}.
\end{align}
\end{lemma}
\begin{proof}[Proof of (\ref{p44})]
Let $\mathfrak{m}=(m,n)=\mathbb{F}$ be a vertex of ${\bf N}(P,D_B)$. Then if  $j\in(-\mathbb{F}^*)$, it holds that $2^{j\cdot \mathfrak{m}}\ge 2^{j_1}$ since $\Lambda(P)$ contains the point whose first component is $\ge 1$. 
By the  minor arc condition $\phi^{\rm{minor}}_j(\xi)\ne 0$ and by the Dirichlet approximation (\ref{002}),  there are  $q_3=q_3(j,\xi_3)$ and $a_3=a_3(j,\xi_3)$ such that 
\begin{align}\label{419p}
\left|\xi_3-\frac{a_3}{q_3}\right|<\frac{1}{q_32^{j\cdot \mathfrak{m}}2^{-\frac{1}{10}j_1}}\le \frac{1}{q_3^2}\ \text{for}\ gcd(q_3,a_3)=1 \ \text{and}\   2^{j_1/10}\le q_3\le 2^{Cj_1}
\end{align}
where we write $2^{Cj_1}=2^{j\cdot \mathfrak{m}}2^{-\frac{1}{10}j_1}$. Set  $Q(t_1,t_2)=\xi_1t_1+\xi_2t_2+\xi_3 P(t_1,t_2)$.Then by $\mathfrak{m}\ne (1,0),(0,1)$ in the reduction (2) in Section 2.5,   one can see that $\mathfrak{m}$ is a vertex of ${\bf N}(Q,D_B)$
with $c_{\mathfrak{m}}\xi_3$ being a coefficient of the monomial of the exponent $\mathfrak{m}$ in $Q(t_1,t_2)$.  Then from $2^{j_1}\ge 2^{j_2}\ge |c_{\mathfrak{m}}|^{100}$ in (\ref{0122}) we have $|c_{\mathfrak{m}}|\le 2^{j_1/100}$ . By this and (\ref{419p}),
one can use Lemma  1 of \cite{SW2} to find   $\tilde{a}_3/\tilde{q}_3$ to approximate the coefficient $\xi_3c_{\mathfrak{m}}$ of the monomial $t^{\mathfrak{m}}$ of $Q(t_1,t_2)$
$$\left| \xi_3 c_{\mathfrak{m}}-\frac{\tilde{a}_3}{\tilde{q_3}}\right|\le   \frac{1}{\tilde{q}_3^2}
\ \text{where}\ \text{gcd}(\tilde{a}_3,\tilde{q}_3)=1\ \text{and}\ \frac{2^{j_1(1/10-1/100)}}{2}\le |\tilde{q}_3|\le 2 2^{j_1(C-1/100)}.$$
{\bf  Case  1}.  Let $P\in \mathbb{Z}^{1}[t_1,t_2]$ and
 $2^{j_1}<2^{100dj_2}$. Then apply Proposition \ref{prop21} for the phase polynomial $Q(t_1,t_2)$,  whose coefficient $\xi_3 c_{\mathfrak{m}}$ of $t^{\mathfrak{m}}$ has the   good approximation above, to obtain  (\ref{p44}).\\
{\bf Case 2}.  Let $P\in \mathbb{Z}^{\ge 2}[t_1,t_2]$. If
 $2^{j_1}<2^{100dj_2}$, apply  Proposition \ref{prop21} to obtain the desired bound as above.  There remains the case $2^{j_1}\ge 2^{100d j_2}$.   One can write $P(t)=\sum_{i=0}^M R_i(t_2)t_1^i$ with $M\ge 2$.  Fix $i=m\ge 2$. Then from  (\ref{0122m}), one observes that
\begin{align*}
R_m(t_2)=\sum c_{mn}t_2^n \approx c_{mN}t_2^N\ \text{non-vanishing   for all $t_2\sim 2^{j_2}$}
\end{align*}
which forms a coefficient $\xi_3R_m(t_2)$ of $t_1^m$ in the  polynomial $\xi_1t_1+\xi_2t_2+\xi_3P(t)$ of $t_1$: $$Q_{t_2}(t_1)=\sum_{i=2}^{M} \xi_3 R_i(t_2) t_1^i+( \xi_1+\xi_3 R_1(t_2))t_1^1+(\xi_2t_2+\xi_3R_0(t_2))t_1^0$$
and   $|R_m(t_2)|\le  2^{j_2d}\le 2^{j_1/100}$. This combined with (\ref{419p}) enables us to apply Lemma 1 of \cite{SW2}. Then we have    $\tilde{a}_3/\tilde{q}_3$   approximating the coefficient $R_m(t_2)\xi_3$ of $Q_2(t_1)$ as
$$\left|R_m(t_2)\xi_3-\frac{\tilde{a}_3}{\tilde{q}_3} \right|<\frac{1}{\tilde{q}_3^2}\ \text{with}\ 
 \text{gcd}(\tilde{a}_3,\tilde{q}_3)=1\ \text{and}\ \frac{2^{j_1(1/10-1/100)}}{2}\le |\tilde{q}_3|\le 2 2^{j_1(C-1/100)}.$$
Therefore, for each fixed $t_2\sim 2^{j_2}$ we   apply  the 1-parameter Weyl sum estimate to have
 $$ \sum_{t_1\sim 2^{j_1}} e^{2\pi i  Q_2(t_1)}=O(2^{j_1-cj_1})$$
 which implies (\ref{p44}).  \end{proof}
\begin{proof}[Proof of Proposition \ref{minor arc}]
Let $\phi^{\rm{minor}}_j(\xi)\ne 0$ and $Q(t_1,t_2)=\xi_1t_1+\xi_2t_2+\xi_3 P(t_1,t_2)$. By Lemma \ref{4013} and Remark \ref{rmk41},  for the cases (i) and (ii) of the Proposition \ref{minor arc}, we obtain  \begin{align}\label{4gg}
\sup_{t_1\le 2^{j_1},t_2\le 2^{j_2}}\left|\sum_{2^{j_1-1}\le {t_1}'\le t_1}\sum_{2^{j_2-1}\le {t_2}'\le  {t_2}}e^{2\pi i  Q({t_1}',{t_2}')}\right| =O(2^{j_1+j_2} 2^{-cj_1}).
\end{align}
Then  (\ref{0053}) follows from  (\ref{4gg}) and the following summation by parts estimate,
\begin{align}
|H_{j_1,j_2}^{++}(\xi)| &\le C\frac{1}{2^{j_1}}\frac{1}{2^{j_2}}\sup_{t_1\le 2^{j_1},t_2\le 2^{j_2}}\left|\sum_{2^{j_1-1}\le {t_1}'\le t_1}\sum_{2^{j_2-1}\le {t_2}'\le  {t_2}}e^{2\pi i  Q({t_1}',{t_2}')}\right|\label{0057}
\end{align}
where
$$  H_{j}^{++}(\xi):= \sum_{2^{j_2-1}\le  t_2 \le 2^{j_2}} \sum_{2^{j_1-1}\le t_1\le  2^{j_1}} e^{2\pi i  Q(t_1,t_2)}\frac{1}{t_1t_2}.$$  
\begin{proof}[Proof of (\ref{0057})]
Let $E(t_1,t_2)=\sum_{2^{j_1-1}\le {t_1}'\le t_1}e^{2\pi i Q({t_1}',t_2)}$  and  write $H^{++}_{j_1,j_2}(\xi)$ as
$$  H^{++}_{j_1,j_2}(\xi):= \sum_{2^{j_2-1}\le  t_2 \le 2^{j_2}}\frac{H_{j_1}(t_2)}{t_2}\ \text{with}\ H_{j_1}(t_2)=\sum_{2^{j_1-1}\le t_1\le  2^{j_1}} (E(t_1,t_2)-E(t_1-1,t_2))\frac{1}{t_1}$$ 
where $ E(2^{j_1-1}-1,t_2)=0$. Then  apply the summation by parts to $\sum_{t_1}$: 
\begin{align*}
H_{j_1}(t_2)& =\left(\sum_{2^{j_1-1}\le t_1\le 2^{j_1}-1} E(t_1,t_2)\left(\frac{1}{t_1}-\frac{1}{t_1+1} \right)\right)  +E(2^{j_1},t_2)\frac{1}{2^{j_1}}. \end{align*}
By taking $\sum_{2^{j_2-1}\le t_2\le 2^{j_2}}\frac{1}{t_2}$ on both sides above, we have
 \begin{align}
 H_{j_1,j_2}^{++}(\xi_1,\xi_2,\xi_3) &=\sum_{2^{j_2-1}\le t_2\le 2^{j_2}}\frac{1}{t_2}\left(\sum_{2^{j_1-1}\le t_1\le 2^{j_1}-1} E(t_1,t_2)(\frac{1}{t_1}-\frac{1}{t_1+1})\right)\nonumber\\
 &+  \frac{1}{2^{j_1}} \left( \sum_{2^{j_2-1}\le t_2\le 2^{j_2}}\frac{1}{t_2}E(2^{j_1},t_2)\right).\label{9494}
\end{align}
The first term above is bounded by $\sum_{t_1\sim 2^{j_1}}\max_{t_1\sim 2^{j_1}} | \sum_{2^{j_2-1}\le t_2\le 2^{j_2}}\frac{1}{t_2} E(t_1,t_2)| 2^{-2j_1}.$ With respect to $t_2$, we shall apply the summation by parts again
 to have
$$\left|\sum_{2^{j_2-1}\le t_2\le 2^{j_2}}\frac{1}{t_2} E(t_1,t_2)\right|\le \left|\sum_{2^{j_2-1}\le t_2\le 2^{j_2}-1}\left(\frac{1}{t_2}-\frac{1}{t_2+1}\right) U(t_1,t_2)\right|+ \left|U(t_1,2^{j_2})\frac{1}{2^{j_2}} \right|$$
where
$$U(t_1,t_2)=\sum_{2^{j_2-1}\le t_2'\le t_2}\sum_{2^{j_1-1}\le {t_1}'\le t_1}e^{2\pi i Q({t_1}',t_2')}=O(2^{j_1+j_2}2^{-\delta j_1}).$$
So, we have
$$\sum_{t_1\sim 2^{j_1}}\max_{t_1\sim 2^{j_1}} | \sum_{2^{j_2-1}\le t_2\le 2^{j_2}}\frac{1}{t_2} E(t_1,t_2)| 2^{-2j_1}\le \frac{1}{2^{j_1+j_2}} \max_{t_1,t_2}|U(t_1,t_2)|$$
which is a desired bound for the first term of (\ref{9494}). One can use a similar summation by parts for the second term of (\ref{9494})  to complete the proof of (\ref{0057}). 
\end{proof}
Therefore, we complete the proof of Proposition \ref{minor arc}.
\end{proof}
\section{Average of Gauss Sum and Major-Minor Arc Estimate}\label{Sec500}
\subsection{Average of Gauss Sum}\label{Sec51}
As  discussed in Lemma \ref{lem002}, the exponential sum 
 $$\phi^{\rm{major,minor}}_j(\xi) H_j^{\rm{discrete}}(\xi)=  \sum_{2^{j_2/10}\le q\le 2^{j_1/10}} \sum_{a: (a_3,q)=1}  \phi_{j,q,a}(\xi) H_j^{\rm{discrete}}(\xi)$$ 
  is expressed as the multiplication of the Gauss sum and the piece of the continuous Hilbert transform in (\ref{640}). Thus one  needs to estimate the Gauss sum  in (\ref{011}),
$$S^{t_2}\left(\frac{(a_1,0,a_3)}{q}\right)=\frac{1}{q}\sum_{t_1=1}^q e^{2\pi i\left(\frac{a_1}{q}t_1+\frac{a_3}{q}P(t_1,t_2)\right)}\ \text{where  $\text{gcd}(q,a_3)=1$ }$$
having 
 a decay $q^{-c}$  for each fixed $t_2$. However,  for $P(t_1,t_2)= R_n(t_2)t_1^n+R_{n-1}(t_2)t_1^{n-1}+\cdots+R_0(t_2)t_1^0$,
one cannot have such a decay if $ \text{gcd}(a_3R_n(t_2), q)$ is relatively large. 
Instead, we are able to take the  
   average the Gauss sum over $t_2\sim 2^{j_2}$ in (\ref{640}) as: $$\frac{1}{2^{j_2}}\sum_{t_2\sim 2^{j_2}}\left|\frac{1}{q}\sum_{t_1=1}^q e^{2\pi i\left(\frac{a_1}{q}t_1+\frac{a_3}{q}P(t_1,t_2)\right)}\right|=O(q^{-c}) $$
   with one exception $P(t)=t_1R_1(t_2)+R_0(t_2)$ and $q\gg 2^{j_2C}$.
Its proof is based on the following counting lemma.
\begin{lemma}[Number of Congruent Solutions]\label{8001}
	Let $p$ be a prime and let $G(t)=b_{n}t^{n}+b_{n-1}t^{n-1}+\cdots +b_{1}t+b_{0}\in \mathbb{Z}^n[t]$. Suppose that   $b_i\not\equiv 0  \pmod{p^{m_0}}$ for some $0\le i\le n$ and $m_0\ge 1$.  Set 
\begin{align}\label{sf99}
K_{\alpha}:=\left\{t\in[1,p^{\alpha}]:G(t)\equiv 0 \pmod{p^{\alpha}}\right\}.
\end{align}
Then there are $C_n>0$ and $ \rho_n\in (0,1)$ independent of $\alpha$ and  values of $b_0,\cdots,b_n$,  such that $$ \sharp\left(K_{\alpha}\right)\leq C_{n}p^{m_0-1} p^{(1-\rho_{n})\alpha}.$$
\end{lemma}
\begin{remark}
The above lemma appears to be well-known,but we provide a detailed proof here  to make our paper self-contained. In particular, we may utilize this lemma  for future research in  multi-parameter and higher-dimensional discrete settings.
\end{remark}
\begin{proof}	
 We shall
	use an induction   argument for $n$.  It is true for $n=1$ because $b_1t+b_0\equiv 0 \pmod{p^{\alpha}}$ has  at most $p^{m_0-1}$ solutions  in $[1,p^{\alpha}]$ when $b_1\not\equiv 0\pmod{p^{m_0}}$.   Assume the result for $\text{deg}(G)\le n-1$:  If $H(t)=  c_{s}t^{s}+c_{s-1}t^{s-1}+\cdots +c_{1}t+c_{0}$ with  $c_i\not\equiv 0  \pmod{p^{m_0}}$ and $0\le i\le s\le n-1$, it holds that for every $k\in \mathbb{N}$,
		$$\exists \rho_{s}\in (0,1)\ \text{such that}\ \sharp\left( \left\{t\in[1,p^{k}]:H(t)\equiv 0 \pmod{p^{k}}\right\}   \right)\leq C_{s}p^{m_0-1}p^{(1-\rho_{s})k}.$$    We now prove the case $  \text{deg}(G)=n $.  We may assume that there is a solution $a\in [1,p^{\alpha}]$ satisfying $G(t)\equiv 0\pmod{p^{\alpha}}$. This implies that  then there is $h(t)$  satisfying  $G(t)\equiv (t-a)h(t)\pmod{p^{\alpha}}$. In other word, if $t\in K_{\alpha}$, then it holds that $p^{\alpha}|(t-a)h(t)$. Equivalently,  there exists  $ 0\le k\le \alpha$ such that
	$$   p^{\alpha-k}|(t-a)\ \text{and}\ p^k|h(t).$$Hence, we can split $K_{\alpha}$ as \begin{align*}
		&K_{\alpha}=\bigcup_{k}(\{t\in[1,p^{\alpha}]:t\equiv a \pmod{p^{\alpha-k}}\}\cap \{t\in[1,p^{\alpha}]:h(t)\equiv 0 \pmod{p^{k}}\}).\end{align*} 
	There are at most $p^{k}$ solutions in $[1,p^{\alpha}]$ for $(t-a)\equiv 0\pmod{p^{\alpha-k}}$. By the assumption, combined with the expression $$h(t)\equiv b_nt^{n-1}+(a b_n+b_{n-1})t^{n-2}+(a^2 b_n+a b_{n-1}+b_{n-2})t^{n-3}+\cdots  \pmod{p^{k}}$$  where   $b_n\not\equiv 0  \pmod{p^{m_0}}$ or    $a b_n+b_{n-1}\not\equiv 0  \pmod{p^{m_0}}$ or $a^2 b_n+a b_{n-1}+b_{n-2} \not\equiv 0  \pmod{p^{m_0}}$ etc,
 there are also at most $C_{n-1}p^{m_0-1}p^{(1-\rho_{n-1})k}$ solutions of $h(t)\equiv 0\pmod{p^{k}}$ in each  interval of   length  $p^k$,  contained  in $[1,p^{\alpha}]$. Since there are $p^{\alpha-k}$ number of such intervals, there are at most $C_{n-1}p^{m_0-1}p^{(1-\rho_{n-1})k}\times p^{\alpha-k}$ solutions  in $[1,p^{\alpha}]$. Thus  
	\begin{align*} 
	\sharp(K_{\alpha})&\le  \sum_{k=1}^{[(1-\rho_{n-1})\alpha]}\sharp\left(\{t\in[1,p^{\alpha}]:t-a\equiv 0 \pmod{p^{\alpha-k}}\}\right) \\
	&+\sum_{k=[(1-\rho_{n-1})\alpha]}^{\alpha}\sharp\left(\{t\in[1,p^{\alpha}]:h(t)\equiv 0 \pmod{p^{k}}\}\right)\\
		&\leq \sum_{k=1}^{[(1-\rho_{n-1})\alpha]}p^k+C_{n-1}p^{m_0-1}\sum_{k=[(1-\rho_{n-1})\alpha]+1}^{\alpha}p^{(1-\rho_{n-1})k}\cdot p^{\alpha-k}\\
		&\leq \alpha p^{(1-\rho_{n-1})\alpha}+C_{n-1}p^{m_0-1}\alpha p^{(1-\rho_{n-1}(1-\rho_{n-1}))\alpha}\le
		 C_np^{m_0-1}p^{(1-\rho_{n})\alpha}.
	\end{align*}
	Here, we can make  $C_n>0$ and $\rho_n\in (0,1)$  independent of $\alpha$, since  $\alpha p^{-\epsilon \alpha}\ll 1$ for large $\alpha$. We may choose $\rho_n=1/n$.
	Therefore, we have proved Lemma \ref{8001}.
\end{proof}

\begin{proposition}\label{lem055}\text{$[\rm{Average\  of\  Gauss\ Sum}]$}
For each fixed $|t_2|\sim 2^{j_2}\ge C(P)$  in  (\ref{0122}),  consider the Gauss sum, $S^{t_2}\left(\frac{(a_1,0,a_3)}{q}\right): =\frac{1}{q}\sum_{t_1=1}^q e^{2\pi i\left(\frac{a_1}{q}t_1+\frac{a_3}{q}P(t_1,t_2)\right)}$ in (\ref{011}) 
where $(1,0)\notin \Lambda(P)$ as in  (2) of Section \ref{Sec25n}. Suppose that 
  $$\text{$\exists\ 0<\eta_1<1$ such that}\  \text{gcd}(a_3,q)\le q^{1-\eta_1}\ \text{or   $\exists 0<\eta_2<1$ such that}\ \text{gcd}(a_1,q)\le q^{1-\eta_2}.$$
  Then it holds that
\begin{itemize}
\item If $P(t)\in \mathbb{Z}^{\ge 2}[t_1,t_2]$, then there are  $\delta, C>0$ independent of $q$ and $j_2$ such  that
\begin{align}\label{110}
&\frac{1}{2^{j_2}}  \sum_{t_2\sim 2^{j_2}} |S^{t_2}\left(\frac{(a_1,0,a_3)}{q}\right) |  \le Cq^{-\delta}.
\end{align}
\item If  $P(t)\in \mathbb{Z}^{1}[t_1,t_2]$. Then
with an additional assumption that $q< 2^{100dj_2}$,  \begin{align}\label{110a}
&\frac{1}{2^{j_2}}  \sum_{t_2\sim 2^{j_2}} |S^{t_2}\left(\frac{(a_1,0,a_3)}{q}\right) |  \le Cq^{-\delta}. 
\end{align}
\end{itemize}\end{proposition}
\begin{remark}
Assume that $P(t)=t_1R_1(t_2)+R_0(t_2)\in \mathbb{Z}^1[t_1,t_2]$ and $q\ge 2^{100 dj_2}$. Then whenever  $|\{ t_2\sim 2^{j_2}: a_3R_1(t_2)+a_1\equiv 0 (\text{mod}\ q)\}|\ne 0$,  
\begin{align*} 
\frac{1}{2^{j_2}}  \sum_{t_2\sim 2^{j_2}} |S^{t_2}\left(\frac{(a_1,0,a_3)}{q}\right) | &=
\frac{1}{2^{j_2}}  \sum_{t_2\sim 2^{j_2}} \frac{1}{q}|\sum_{t_1=1}^q e^{2\pi i\frac{1}{q}t_1(a_3R_1(t_2)+a_1)}  |\nonumber\\
& =\frac{1}{2^{j_2}} |\{ t_2\sim 2^{j_2}: a_3R_1(t_2)+a_1\equiv 0 (\text{mod}\ q)\}|
\ge 2^{-j_2}.\end{align*}
This tells us that the average of the Gauss sum possibly has no bound such as  $O(q^{-c})$  if
 $q\gg 2^{Kj_2}$ for sufficiently large $K>0$ and $P(t)\in \mathbb{Z}^1[t_1,t_2]$. 
 We shall treat this case in Section \ref{Sec72}.
\end{remark}
\begin{proof}[Proof of (\ref{110})]
Assume that $\text{gcd}(a_3,q)\le q^{1-\eta}$.  
Write  $P(t_1,t_2)=\sum_{\mathfrak{m}\in \Lambda(P)}  c_{\mathfrak{m}}t^{\mathfrak{m}}$  as
\begin{align}\label{0551}
P(t_1,t_2)&=t_1^m R_m(t_2)+ \sum_{m'<m}R_{m'}(t_2)t_1^{m'}  \ \text{where}\\
R_m(t_2)&=c_{mn}t_2^n+c_{m, n-1}t_2^{n-1}+\cdots+c_{m,0}\ \text{and}\  R_{m'}(t_2)=\sum_{n'} c_{m'n'} t_2^{n'}.\nonumber
\end{align}
Our case  for (\ref{110}) is  $m\ge 2$ in the above. From $c_{mn}\ne 0$ and from
 $t_2\ge 2^{j_2-1}\ge  C(P)\gg \sum_{\mathfrak{m},\mathfrak{n}\in \Lambda(P)}\left|\frac{c_{\mathfrak{m}}}{c_{\mathfrak{n}}}\right| $ in the hypothesis,   it follows that
$$ |c_{mn}t_2^{n} |\ge 5 \ (c_{m,n-1}t_2^{n-1}+\cdots+ c_{m,0}t_2^{0})\ \text{showing that}\   |R_m(t_2)|\sim |c_{mn}t_2^n|.$$
One can assume that  
\begin{align}\label{ott}
 q^{1/(20(m+n))}  \ge |c_{mn}|
\end{align}
since $ |c_{mn}|\ge q^{1/(20(m+n))}$ would give a bound $C  q^{-1/(20(m+n))}$ in (\ref{110}) with  $C=|c_{mn}|$. For estimating (\ref{110}), we decompose $\frac{1}{2^{j_2}}   \sum_{t_2\sim 2^{j_2}}  |S^{t_2}\left(\frac{(a_1,0,a_3)}{q}\right) |  =A+B$  where
\begin{align*}   
 &A:=\frac{1}{2^{j_2}}  \sum_{t_2\sim 2^{j_2}:   1\le \text{gcd}(q,R_m(t_2)) \le q^{\eta/2} } |S^{t_2}\left(\frac{(a_1,0,a_3)}{q}\right)|,\\ &B:= \frac{1}{2^{j_2}}  \sum_{t_2\sim 2^{j_2}:     \text{gcd}(q,R_m(t_2)) \ge q^{\eta/2} }|S^{t_2}\left(\frac{(a_1,0,a_3)}{q}\right)|.
\end{align*}
{\bf Estimate of $A$}.  For a fixed $t_2$, put $\text{gcd}(q,R_m(t_2))=p\le q^{\eta/2}$  and $\text{gcd}(q,a_3)=h\le q^{1-\eta}$. 
Then $\text{gcd}(q,a_3R_m(t_2))=ph_0\le ph\le q^{1-\eta/2}$.
So, there are relatively prime $\tilde{q}$ and $\tilde{a}$ such that  $q=ph_0\tilde{q}$ and $a_3 R_m(t_2)=ph_0\tilde{a} $. Then     it holds that  $\frac{a_3}{q}R_m(t_2)=\frac{\tilde{a}}{\tilde{q}} $ with
  $\text{gcd}(\tilde{a}, \tilde{q})=1$ and  $q\ge \tilde{q}=q/(ph_0)\ge q^{\eta/2}$.  This enables us to express (\ref{0551}) as the polynomial containing $ \frac{\tilde{a}}{\tilde{q}}  t_1^m$ and apply the  standard Wely sum estimate for each $t_2$ in the sum $A$ above as
\begin{align*}
S^{t_2}\left(\frac{(a_1,0,a_3)}{q}\right) &=\frac{1}{q}\sum_{t_1 =1}^q e^{2\pi i \left(  \frac{\tilde{a}}{\tilde{q}}  t_1^m +  \sum_{m'<m}    \frac{a_3 R_{m'}(t_2) }{q}  t_1^{m'}  +\frac{a_1}{q}t_1 \right)} =O\left(\frac{1}{q^\delta}\right) 
\end{align*}
which yields $A=O(1/q^\delta)$.\\
{\bf Estimate of $B$}.  Let $D(q)=\{p\in \mathbb{Z}: p| q\}$. Then we see that 
\begin{align*}
\sharp(D(q))\le C_{\epsilon} q^{\epsilon}\ \text{ for sufficiently small $\epsilon>0$. }
\end{align*}
This implies that for that small $\epsilon$,
\begin{align}
  B &\le \frac{1}{2^{j_2}}   \sum_{p\in D(q);  q^{\eta/2}\le p\le q} \sharp( \{t_2\sim 2^{j_2}: \text{gcd}(q,R_m(t_2))=p\}) \nonumber \\
  & 
  \le \frac{q^{\epsilon} }{2^{j_2}}  \max_{q^{\eta/2}\le p\le q}\sharp( \{t_{2}\sim 2^{j_2}: p|R_m(t_2)\}) \label{a099} 
  \end{align}
where if $q^{\eta/2}\le p\le q$, then one can observe that  \begin{align}\label{t}
	q\le p^{2/\eta}\le R_m(t_2)^{2/\eta}\le 2^{(2n/\eta)j_2}.
\end{align} To handle $\ref{a099}$, we shall prove that for $p$ satisfying $q^{\eta/2}\le p\le q$
\begin{align}\label{0055}
	\sharp(\{t_{2}\sim 2^{j_2}: p| R_m(t_{2})\})\le C2^{j_2}/q^c
\end{align}
for some $C,c>0$ depending only on the degree and the coefficients of $R_m(t_2)$ and $\eta$.
Set $p:= p_1^{\alpha_1} \cdots p^{\alpha_M}_M$ where $p_1,\cdots,p_M$ are prime numbers. Then we shall consider the following two cases.\\
{\bf Case 1}.  We   treat the case that   $p_i\ge 2^{j_2}$ for some $p_i\in\{p_1,\cdots,p_M\}$.   Then we have
\begin{align}
\sharp( \{t_{2}\sim 2^{j_2}: p|R_m(t_2)\}) 
 &\le \sharp(\{1\le t_{2}\le p_i: p_i| R_m(t_{2})\})\nonumber \\
 &=\sharp(\{1\le t_{2}\le p_i:  R_m(t_{2})\equiv 0\ (\rm{mod}\ {p_{i}})\})\le n.\label{sf8}\end{align} 
\begin{proof}[Proof of (\ref{sf8})]
Observe that   $\deg(R_m(t_2))=n\le C(P)<  2^{j_2}\le p_i$ from the hypothesis, and  that $|c_{mn}|\le |q|^{1/(20n)}\ll       2^{j_2}\le  p_i$ because of (\ref{ott}) and $q\le   2^{(2n/\eta)j_2}$ in (\ref{t}). Thus,
we can apply the Lagrange theorem regarding the number of   congruent solutions to obtain the last inequality of (\ref{sf8}). 
	\end{proof} 
(\ref{t}) and	(\ref{sf8})  imply that  \begin{align*}
		\sharp(\{t_{2}\sim 2^{j_2}: p| R_m(t_{2})\})\le n \le  C2^{j_2}/q^c
	\end{align*}
	for some $C,c>0$ depending only on the degree and the coefficients of $R_m(t_2)$ and $\eta$.\\
	\noindent
	{\bf Case 2}. There remains the case $p_i\le 2^{j_2}$ for all 
	$p_i\in \{p_1,\cdots,p_M\}$.   To show (\ref{0055}), due to $q^{\eta/2}\le p=p_1^{\alpha_1} \cdots p^{\alpha_M}_M$,   observe that for some 
$1\le K\le M$ and $0\le \beta_i\le \alpha_i$, 
 $$  \tilde{p}:=p_1^{\beta_1} \cdots p_K^{\beta_K}\in [q^{b}, 2^{j_2}]\ \text{for some $b>0$ not depending on $q$ and $j_2$}$$ 
 where these $p_1,\cdots,p_K$ are rearrangement of $p_1,\cdots,p_M$ forming $p$.
   Then, by decomposing $[1,2^{j_2}]$ into the intervals of $[ s\tilde{p}+1, (s+1)\tilde{p}]$ with $s=0,1,2,\cdots$, we  have the following upper bound
$$  \sharp(\{t_{2}\sim 2^{j_2}: p| R_m(t_{2})\})\le   \sharp(\{t_{2}\in [1,\tilde{p}]: \tilde{p}| R_m(t_{2})\})\left(\frac{2^{j_2}}{\tilde{p}}+1\right)$$ 
since $R_m(s\tilde{p}+t_2)\equiv R_m(t_2)\ (\rm{mod}\ \tilde{p})$.
By the ring isomorphism version    $\mathbb{Z}/(\tilde{p}\mathbb{Z})\cong \mathbb{Z}/(p_1^{\beta_1}\mathbb{Z})\times\cdots\mathbb{Z}/(p_1^{\beta_K}\mathbb{Z})$ of the Chinese remainder theorem, we obtain that
$$   \sharp(\{t_{2}\in [1,\tilde{p}]: \tilde{p}| R_m(t_{2})\})=\prod_{i=1}^K   \sharp(\{t_{2}\in [1,p_i^{\beta_i}]: p_i^{\beta_i}| R_m(t_{2})\}).$$
To show (\ref{0055}), we claim that
\begin{align*}
 \prod_{i=1}^K   \sharp(\{t_{2}\in [1,p_i^{\beta_i}]: p_i^{\beta_i}| R_m(t_{2})\})\le C|\tilde{p}||\tilde{p}|^{-c}.
 \end{align*}
To prove this, it suffices to assume that $|\tilde{p}|\gg 1$. Say,
$$  \tilde{p}\ge   e^{C_n^{\frac{100}{\rho_n}} }$$
where $C_n$ and $\rho_n$ are constants in Lemma \ref{8001}.
For $p_1,\cdots,p_K$ and  $c_{mn}$ above, we can take the maximal $\gamma_1,\cdots,\gamma_K$ such that  $p_1^{\gamma_1}\cdots p_K^{\gamma_K}$ divides  $|c_{mn}|$. Then from $$p_1^{\gamma_1+1},\cdots, p_K^{\gamma_K+1}
\not| c_{mn}$$
one can observe that for $m_0= \gamma_i+1$ where $i=1,\cdots,K$,   
$$\text{$c_{mn}\not\equiv 0\ (\text{mod}\ p_i^{m_0} )$ in Lemma \ref{8001}}.$$
 So, we can apply (\ref{sf99}) in Lemma \ref{8001} for the polynomial $$R_m(t_2)=c_{mn}t_2^n+c_{m, n-1}t_2^{n-1}+\cdots+c_{m,0}$$ to obtain the first inequality below
\begin{align}
\prod_{i=1}^K   \sharp(\{t_{2}\in [1,p_i^{\beta_i}]: p_i^{\beta_i}| R_m(t_{2})\})\nonumber&\le 
C_n^K   \prod_{i=1}^K  p_i^{\gamma_i}p_i^{\beta_i(1-\rho_n)}  \\
\nonumber&\le 
C_n^K   \prod_{i=1}^K  p_i^{\gamma_i}p_i^{\beta_i(1-\rho_n)} \prod_{i=K+1}^M p_i^{0}p_i^{\beta_i(1-\rho_n)}\\
&\le C_n^K |c_{mn}|\tilde{p}^{1-\rho_n}\le 
|c_{mn}|\tilde{p}^{1-\frac{\rho_n}{2}}.\label{0400}
\end{align}
where the second follows from $ \prod_{i=K+1}^M p_i^{0}p_i^{\beta_i(1-\rho_n)}\ge 1$ and the third from $p_1^{\gamma_1}\cdots p_K^{\gamma_K}$ dividing $|c_{mn}|$.
For the last inequality  of (\ref{0400}), we shall show that $C_n^K\lesssim  \tilde{p}^{\frac{\rho_n}{2}} $. It suffices to consider the case $\tilde{p}\gg 1$  as above. Then one can observe  $$K\le \text{number of distinct prime divisors of $\tilde{p}$ }\lesssim \frac{\log \tilde{p}}{\log (\log \tilde{p})}\le \frac{ \log \tilde{p}}{  \frac{100}{\rho_n}\log C_n}.$$   Thus
 $  C_n^K\le   \tilde{p}^{\frac{\rho_n}{100}}$.  This yields the last inequality of (\ref{0400}).
Using (\ref{0400}) with $\tilde{p}\ge q^b$, one has $c,C>0$:   
$$  \sharp(\{t_{2}\sim 2^{j_2}: p| R_m(t_{2})\})\le  2|c_{mn}|\tilde{p}^{1-\frac{\rho_n}{2}} 2^{j_2}\tilde{p}^{-1}\le C 2^{j_{2}} q^{- c}$$
which proves (\ref{0055}). \end{proof}
\begin{proof}[Proof of (\ref{110a})]
We  prove (\ref{110a}) under the assumption  $h=\text{gcd}(a_3,q)\le q^{1-\eta}$.   Our case is that $q<2^{100d j_2}$ and   $m=1$ in (\ref{0551}):
\begin{align*} 
P(t_1,t_2)&=t_1^1 R_1(t_2)+ R_0(t_2) \ \text{and}\ S^{t_2}(\frac{a}{q})  =\frac{1}{q}\sum_{t_1=1}^q e^{2\pi i \frac{1}{q} \left(t_1  (a_3R_1(t_2)+a_1)+ a_3R_0(t_2)\right)}.
\end{align*}
Note that
\begin{align}\label{q}
\frac{1}{2^{j_2}}  \sum_{t_2\sim 2^{j_2}} |S^{t_2}\left(\frac{(a_1,0,a_3)}{q}\right)|&\le\frac{1}{2^{j_2}}   \sum_{t_2\sim 2^{j_2}} \frac{1}{q}\left|\sum_{t_1=1}^q e^{2\pi i\frac{1}{q}t_1(a_3R_1(t_2)+a_1)}  \right| \nonumber
\\
&=\frac{1}{2^{j_2}}  \sum_{ t_2\sim 2^{j_2}: a_3R_1(t_2)+a_1\equiv 0 (\text{mod}\ q)}1.
\end{align}
For the inequality (\ref{q}), we shall count \begin{align*}
	\sharp(\{t_2\sim 2^{j_2}: a_3R_1(t_2)+a_1\equiv 0 (\text{mod}\ q)   \}).
\end{align*} Let $a_3=h\tilde{a}_3$ and $q=h\tilde{q}$ where $\tilde{q}\ge q^{\eta}$. In order to have $t_2$ satisfying $a_3R_1(t_2)+a_1\equiv 0 (\text{mod}\ q)$, it is necessary that  $h|a_1$. We write $a_1$ as $a_1=\tilde{a}_1h$.
Hence, the equation $a_3R_1(t_2)+a_1\equiv 0 (\text{mod}\ q) $ becomes
$\tilde{a}_3R_1(t_2)+\tilde{a}_1\equiv 0 (\text{mod} \ \tilde{q})$ where $\text{gcd}(\tilde{a}_3,\tilde{q})=1$. By $\tilde{q}\ge q^{\eta}$ and $\text{gcd}(\tilde{a}_3,\tilde{q})=1$ and the assumption $\tilde{q}\le q<2^{100d j_2}$, if we replace $R_m(t_2)$ and $p$  in (\ref{0055}) with $\tilde{a}_3R_1(t_2)+\tilde{a}_1$ and $\tilde{q}$ respectively, then  we obtain that  \begin{align}\label{t1}
	\sharp(\{t_2\sim 2^{j_2}: \tilde{a}_3R_1(t_2)+\tilde{a}_1\equiv 0 (\text{mod} \ \tilde{q})  \})\le C2^{j_2}/q^c \quad \text{(due to (\ref{0055}))}
	\end{align}
	for some $C,c>0$ depending only on the degree and the coefficients of $R_1(t_2)$ and $\eta$.
  \end{proof}
We now show (\ref{110}) and (\ref{110a}) under the condition $\text{gcd}(q,a_1)\le q^{1-\eta}$.\begin{proof}[The case of $\text{gcd}(a_1,q)\le q^{1-\eta}$]
Next, we claim that (\ref{110}) and (\ref{110a}) hold under the condition of  $\text{gcd}(a_1,q)=h\le q^{1-\eta}$.  By the previous generalization, it suffices to consider the case $\text{gcd}(a_3,q)> q^{1-\eta/2}$.  Then it holds that
 $\text{gcd}((a_1+a_3 R_1(t_2)), q)\le q^{1-\eta/2}$. Using this for the coefficient of the linear term of 
the one-parameter-Gauss sum $$S^{t_2}\left(\frac{(a_1,0,a_3)}{q}\right)=\frac{1}{q}\sum_{t_1=1}^q e^{2\pi i\left(\frac{(a_1+a_3R_1(t_2))}{q} t_1+\frac{a_3}{q}R_2(t_2) t_1^2+\cdots+\frac{a_3}{q}R_m(t_2)t_1^m\right)}$$  to obtain that $S^{t_2}\left(\frac{(a_1,0,a_3)}{q}\right)=O(q^{-\delta})$ for some $\delta>0$.  This completes a proof of Proposition \ref{lem055}.
\end{proof}
\subsection{Double Hilbert transforms; Continuous Cases}\label{Sec52}
Before estimating the major part, we give a   proof of Theorems \ref{th21} and \ref{th22}. Moreover, we obtain   the $L^p$ boundedness of the continuous double Hilbert transform along a polynomial surface on $D_{B}=\{x\in\mathbb{R}^2: |x_1|,|x_2|\ge 1\}$ where $B=\{-{\bf e}_1,-{\bf e}_2\}$ under the even vertex assumption.

\begin{lemma}\label{lem004}[Single-Parameter Oscillatory Singular Integral]
For $\xi\in \mathbb{R}^N$ and $m_\ell\in \mathbb{Z}_+$ for all $\ell=1,\cdots,N$, let
$$\mathcal{H}_j(\xi)=\int e^{2\pi i \sum_{\ell=1}^N\xi_\ell x^{m_\ell}} \chi\left(  \frac{x}{2^{j }}\right)\frac{dx}{x}\ \text{where $j\in\mathbb{Z}$}$$
where $\chi$  defined in Definition \ref{df31}.
Then there exist $c,C>0$ independent of $\xi$ such that
\begin{align}
\left|\mathcal{H}_j(\xi)\right|\lesssim \min\left\{\left[\sum_{\ell=1}^N |2^{m_\ell j} \xi_\ell|\right]^{-c}, \sum_{\ell=1}^N |2^{m_\ell j} \xi_\ell|\right\}\ \text{and}\ \sum_{j\in\mathbb{Z}} \left|\mathcal{H}_j(\xi)\right|+ \left|\mathcal{H}_j(\xi)\right|^{1/2}&\le C.\label{015}
\end{align}
\end{lemma}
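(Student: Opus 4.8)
The plan is to collapse $\mathcal H_j(\xi)$ to a single universal oscillatory integral depending only on the rescaled frequency vector $\lambda=(\lambda_\ell)_\ell$ with $\lambda_\ell:=2^{m_\ell j}\xi_\ell$, establish two complementary bounds for it, and then sum a geometric series in $j$. Substituting $x=2^jy$ gives $\mathcal H_j(\xi)=I(\lambda)$, where
$$I(\lambda):=\int e^{2\pi i\sum_\ell \lambda_\ell y^{m_\ell}}\,\chi(y)\,\frac{dy}{y},$$
and $\chi$ is smooth, even, and supported in $\{1\le|y|\le4\}$, so that $\chi(y)/y$ is a smooth amplitude, odd and bounded away from the origin. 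Splitting $y>0$ from $y<0$ and using the evenness of $\chi$ reduces matters to $y>0$ with each $\lambda_\ell$ possibly replaced by $(-1)^{m_\ell}\lambda_\ell$, which leaves $\max_\ell|\lambda_\ell|$ unchanged; I will tacitly work on $y>0$ below.

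\textbf{Two bounds for $I(\lambda)$.} Since $\chi(y)/y$ is odd, $I(0)=0$, hence
$$|I(\lambda)|=|I(\lambda)-I(0)|\le\int\big|e^{2\pi i\sum_\ell\lambda_\ell y^{m_\ell}}-1\big|\,\frac{|\chi(y)|}{|y|}\,dy\lesssim\sum_\ell|\lambda_\ell|,$$
using $|y|\le4$ on the support; this is the small-frequency bound. For the oscillatory bound, write $\psi(y)=\sum_\ell\lambda_\ell y^{m_\ell}$ with the $m_\ell$ distinct positive integers. The linear map from the coefficient vector $(\lambda_\ell)_\ell$ to the jet $(\psi^{(1)}(y_0),\dots,\psi^{(m^\ast)}(y_0))$ at a fixed $y_0$ (with $m^\ast:=\max_\ell m_\ell$) is invertible with a matrix depending only on the $m_\ell$, so $\max_{1\le k\le m^\ast}|\psi^{(k)}(y_0)|\gtrsim_N\max_\ell|\lambda_\ell|$; since each $\psi^{(k)}$ is a polynomial of degree $\le m^\ast-k$, this upgrades to a lower bound $|\psi^{(k)}(y)|\gtrsim_N\max_\ell|\lambda_\ell|$ on a subinterval of fixed length, and van der Corput's lemma (the amplitude $\chi(y)/y$ costing only its bounded $C^1$ norm) yields $|I(\lambda)|\le C_N(\max_\ell|\lambda_\ell|)^{-c_N}$ for some $c_N\in(0,1)$. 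Since $\max_\ell|\lambda_\ell|\ge|\lambda_{\ell_1}|=|2^{m_{\ell_1}j}\xi_{\ell_1}|$ for every $\ell_1$, this already gives the pointwise claim $|\mathcal H_j(\xi)|=O(|2^{m_\ell j}\xi_\ell|^{-c})$ for all $\ell$ in (\ref{015}).

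\textbf{Summation.} Assuming not all $\xi_\ell$ vanish (otherwise $\mathcal H_j\equiv0$), set $g(j):=\max_\ell|\xi_\ell|2^{m_\ell j}$, strictly increasing in $j$ since all $m_\ell\ge1$, and let $j_0:=\min\{j:g(j)\ge1\}$ (finite; $j_0=-\infty$ if $g\ge1$ always). On $\{j<j_0\}$ the small-frequency bound gives $|\mathcal H_j(\xi)|\lesssim N\,g(j)$, and since each $j\mapsto|\xi_\ell|2^{m_\ell j}$ is geometric with ratio $\ge2$,
$$\sum_{j<j_0}g(j)\le\sum_\ell|\xi_\ell|\sum_{j<j_0}2^{m_\ell j}\lesssim\sum_\ell|\xi_\ell|2^{m_\ell j_0}\le 2^{m^\ast}\sum_\ell|\xi_\ell|2^{m_\ell(j_0-1)}\le 2^{m^\ast}N\,g(j_0-1)<2^{m^\ast}N.$$
On $\{j\ge j_0\}$ the oscillatory bound gives $|\mathcal H_j(\xi)|\le C_Ng(j)^{-c_N}$, and $m_\ell\ge1$ forces $g(j)\ge 2^{\,j-j_0}g(j_0)\ge 2^{\,j-j_0}$, so $\sum_{j\ge j_0}g(j)^{-c_N}\le\sum_{j\ge j_0}2^{-c_N(j-j_0)}=(1-2^{-c_N})^{-1}$. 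Adding the two ranges bounds $\sum_j|\mathcal H_j(\xi)|$ by a constant depending only on $N$. The estimate for $\sum_j|\mathcal H_j(\xi)|^{1/2}$ is identical after taking square roots: on $\{j<j_0\}$ use $g(j)^{1/2}\le\sum_\ell(|\xi_\ell|2^{m_\ell j})^{1/2}$ and sum the geometric series with ratios $2^{m_\ell/2}\ge\sqrt2$; on $\{j\ge j_0\}$ sum $g(j)^{-c_N/2}$.

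\textbf{Main difficulty.} The only substantive input is the polynomial van der Corput estimate with a constant depending on $N$ alone and uniform in the coefficients $\lambda$; the singular factor $1/x$ is harmless precisely because the cutoff $\chi(x/2^j)$ keeps us away from the origin, so after rescaling $\chi(x/2^j)/x$ is an honest smooth amplitude. Everything else is geometric-series bookkeeping, and all constants are manifestly independent of $\xi$ because $\mathcal H_j(\xi)$ depends on $(\xi,j)$ only through $\lambda=(2^{m_\ell j}\xi_\ell)_\ell$.
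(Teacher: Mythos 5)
The paper states Lemma \ref{lem004} without proof, treating it as a standard Stein--Wainger-type estimate (cf.\ \cite{SW1,SW3}); there is therefore no in-text proof to compare against. Your argument --- rescale $x=2^jy$ so that $\mathcal{H}_j(\xi)$ depends only on $\lambda_\ell=2^{m_\ell j}\xi_\ell$, then combine the small-frequency bound $|I(\lambda)|\lesssim\sum_\ell|\lambda_\ell|$ (using that $\chi(y)/y$ is odd, so $I(0)=0$) with a polynomial van der Corput bound $|I(\lambda)|\lesssim(\max_\ell|\lambda_\ell|)^{-c}$, and finally sum a geometric series across the transition scale $j_0$ where $\max_\ell|\xi_\ell|2^{m_\ell j}$ crosses $1$ --- is exactly the standard proof, and the scaling bookkeeping (including the $1/2$-power sum) is correct.

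One step is only sketched and should be tightened: you establish $\max_{k\le m^*}|\psi^{(k)}(y_0)|\gtrsim\max_\ell|\lambda_\ell|$ at a fixed $y_0$ and then claim a single subinterval of fixed length where some $\psi^{(k)}$ stays large, to which you apply van der Corput. As stated this controls only that one subinterval, not all of $\mathrm{supp}\,\chi$. The fix is routine: the jet map is injective at every $y$ in the compact support, so by homogeneity and compactness $\max_k|\psi^{(k)}(y)|\gtrsim_{N,m^*}\max_\ell|\lambda_\ell|$ holds uniformly in $y\in\mathrm{supp}\,\chi$; one then partitions the support into $O_{m^*}(1)$ intervals on each of which a fixed derivative $\psi^{(k)}$ dominates (e.g.\ delimited by the zeros of the polynomials $\psi^{(k)}\pm\psi^{(k')}$), applies van der Corput of order $k$ on each piece with the $C^1$ amplitude $\chi(y)/y$, and sums the $O(1)$ pieces. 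With that amendment the proof is complete, and all constants depend only on $N$ and the exponent set $\{m_\ell\}$, never on $\xi$.
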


\begin{proposition}[Two-parameter Oscillatory Singular Integral]\label{prop006}
We consider a real valued polynomial  $P(x_1,x_2)=\sum_{\mathfrak{m}\in\Lambda(P)}c_{\mathfrak{m}}x^{\mathfrak{m}}$  of two variables. Consider 
\begin{align}\label{2D}
\mathcal{H}^{\Lambda(P)}_j (\xi_1,\xi_2,\xi_3)&=\int e^{2\pi i( \xi_1x_1+\xi_2x_2+\xi_3 P(x_1,x_2))} \ \chi\left(  \frac{x_1}{2^{j_1}}\right) \chi\left(  \frac{x_2}{2^{j_2}}\right) \frac{dx_1}{x_1} \frac{dx_2}{x_2}.
\end{align}
If    every vertex vector $\mathfrak{m}=(m_1,m_2)$ of  $ {\bf N}(P,D_{\{-{\bf e}_1,-{\bf e}_2\}})$ has an even component, then
\begin{align}\label{025}
\sum_{j\in \mathbb{Z}_+^2}|\mathcal{H}_j^{\Lambda(P)}(\xi_1,\xi_2,\xi_3)|,\ \sum_{j\in \mathbb{Z}_+^2}|\mathcal{H}_j^{\Lambda(P)}(\xi_1,\xi_2,\xi_3)|^{1/2}\le C
\end{align}
  where  $C$ is  independent of $ \xi$, while it can depend on  the coefficients  of $P $. Moreover for any $a_j$ with $j\in\mathbb{Z}^2_+$, we have
\begin{align}\label{025h}
 \|\sum_{j\in \mathbb{Z}_+^2}[\mathcal{H}_j^{\Lambda(P)}(\xi)\phi(a_j\xi_3)]^{\vee}*f \|_{L^p(\mathbb{R}^3)}\le C\|f\|_{L^p(\mathbb{R}^3)}.
\end{align}

\end{proposition}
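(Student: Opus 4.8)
\textbf{Proof plan for Proposition \ref{prop006}.}
The plan is to prove \eqref{025} by induction on the cardinality $|\Lambda(P)|$ of the exponent set, organizing the sum over $j\in\mathbb{Z}_+^2$ according to the dual face decomposition \eqref{68} of $D_B$ and controlling each dyadic block $\mathcal{H}_j^{\Lambda(P)}$ either by a nonstationary‑phase decay estimate or, on the complementary range, by reducing to a polynomial with a strictly smaller Newton polyhedron. First I would fix a vertex $\mathfrak{m}=(m_1,m_2)\in\mathbb{F}$ for a facet (edge) $\mathbb{F}$ and restrict $j\in(-\mathbb{F}^*)\cap\mathbb{Z}_+^2$; on this cone the domination property \eqref{jq501} gives $|P(2^{j_1}x_1,2^{j_2}x_2)|\approx 2^{j\cdot\mathfrak{m}}$ for $x_1,x_2\sim 1$, so $\xi_3 P$ restricted to the support behaves like a single monomial of size $|\xi_3|2^{j\cdot\mathfrak{m}}$. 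When either $|\xi_1|2^{j_1}$, $|\xi_2|2^{j_2}$, or $|\xi_3|2^{j\cdot\mathfrak{m}}$ is large (say $\ge 2^{\epsilon(j_1+j_2)}$ for suitable $\epsilon$), I would integrate by parts in the corresponding variable — exactly as in Lemma \ref{lem004} applied in one variable at a time, treating the other integral trivially via the uniform bound in \eqref{015} — to obtain a gain $|\mathcal{H}_j^{\Lambda(P)}(\xi)|\lesssim 2^{-c(j_1+j_2)}$, which is summable in $j$ and also has summable square root.

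The remaining, and essential, range is where \emph{all three} of $|\xi_1|2^{j_1}$, $|\xi_2|2^{j_2}$, $|\xi_3|2^{j\cdot\mathfrak{m}}$ are $\lesssim 2^{\epsilon(j_1+j_2)}$, i.e. the phase is essentially flat. Here I would use the even‑vertex hypothesis: since at least one component $m_\nu$ of the vertex $\mathfrak{m}$ is even, the change of variables $x_\nu\mapsto -x_\nu$ shows that the contribution of the leading monomial $c_{\mathfrak{m}}x^{\mathfrak{m}}$ cancels against itself in a way that lets us \emph{subtract it off}: write $P = c_{\mathfrak{m}}x^{\mathfrak{m}} + P'$ where $\Lambda(P') = \Lambda(P)\setminus\{\mathfrak{m}\}$, and estimate $\mathcal{H}_j^{\Lambda(P)}(\xi) - \mathcal{H}_j^{\Lambda(P')}(\xi)$ by the mean value theorem, bounding $|e^{2\pi i \xi_3 c_{\mathfrak{m}}x^{\mathfrak{m}}}-1|\lesssim |\xi_3|2^{j\cdot\mathfrak{m}}\lesssim 2^{\epsilon(j_1+j_2)}$ — which is \emph{not} small, so instead I would keep one oscillatory integral intact: differencing only in the variable where the remaining phase $P'$ still oscillates, and using that the kernel $dx_1dx_2/(x_1x_2)$ picks up no boundary term because $P - c_{\mathfrak m}x^{\mathfrak m}$ has all exponents strictly dominated along $-\mathbb F^*$. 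The cleaner route, which I expect to pursue, is: on the flat range replace $\mathcal{H}_j^{\Lambda(P)}$ by $\mathcal{H}_j^{\Lambda(P)\setminus\{\mathfrak m\}}$ at the cost of an error that is $O(|\xi_3|2^{j\cdot\mathfrak m})$ on the sub‑range $|\xi_3|2^{j\cdot\mathfrak m}\le 1$ and handled by van der Corput on the range $1\le |\xi_3|2^{j\cdot\mathfrak m}\le 2^{\epsilon(j_1+j_2)}$, and then invoke the induction hypothesis for the polynomial with one fewer exponent, whose Newton polyhedron has strictly fewer vertices. The evenness condition is inherited by every sub‑polynomial obtained this way only after one checks that deleting a vertex monomial produces a new Newton polyhedron all of whose vertices are old vertices or lie on old edges — hence still satisfy \eqref{0gg4}; this bookkeeping is where the argument is most delicate.

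For the square‑function bound in \eqref{025}, the same dichotomy applies: on the nonstationary range the gain $2^{-c(j_1+j_2)}$ makes $\sum_j |\mathcal{H}_j^{\Lambda(P)}|^{1/2}$ converge geometrically, and on the flat range the comparison with the lower‑order polynomial plus the induction hypothesis (which is stated for the $1/2$‑power as well) closes the estimate; the base case $\Lambda(P)=\{\mathfrak m\}$ with $m_\nu$ even reduces, after the substitution $x_\nu\mapsto -x_\nu$ in the non‑even variable to symmetrize, to two applications of Lemma \ref{lem004} in the separated variables. Finally, to deduce the vector‑valued/$L^p$ statement \eqref{025h}, I would run the standard Ionescu–Wainger type argument: the Littlewood–Paley cutoff $\psi(a_j\xi_3)$ with $a_j = 2^{j\cdot\mathfrak n}$ localizes $\xi_3$ to a dyadic scale, and combining the pointwise multiplier bounds \eqref{025} (which give $\ell^2$ and, by interpolation with a crude $\ell^1\to\ell^\infty$ bound, $\ell^p$ control of the single pieces) with the almost‑orthogonality of the cutoffs and a Rademacher/Khintchine square‑function argument yields the $\ell^p$ bound for the full sum; the main obstacle there is ensuring the implied constants remain uniform in $\xi$ and independent of the (possibly huge) coefficients $c_{\mathfrak m}$, which forces us to absorb all coefficient dependence into the geometric decay at the first, nonstationary‑phase, step.
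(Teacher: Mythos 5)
There is a genuine gap in your induction: the evenness condition~\eqref{0gg4} is \emph{not} preserved when you delete a vertex monomial. You assert that ``deleting a vertex monomial produces a new Newton polyhedron all of whose vertices are old vertices or lie on old edges,'' but this is false, and it is exactly what the argument needs. For a concrete counterexample take $\Lambda(P)=\{(4,2),(3,3),(2,4)\}$. Then $\mathbf N(P,D_{\{-\mathbf e_1,-\mathbf e_2\}})$ has vertices $(4,2)$ and $(2,4)$, each with an even entry, so the hypothesis holds. Delete the vertex monomial $c_{(4,2)}x_1^4x_2^2$: the remaining exponent set is $\{(3,3),(2,4)\}$, whose Newton polyhedron now has $(3,3)$ as a vertex --- an $(\text{odd},\text{odd})$ vertex --- so the induction hypothesis cannot be invoked for $\Lambda(P)\setminus\{\mathfrak m\}$. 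This kills the inductive step; it is not merely ``delicate bookkeeping.'' The same example also shows the number of vertices need not decrease when $|\Lambda(P)|$ does, contrary to what your plan assumes. (A smaller point: in your description of the base case the sign change $x_\nu\mapsto-x_\nu$ should be performed in the \emph{even} variable --- it is the odd kernel $dx_\nu/x_\nu$ together with the even exponent $m_\nu$ that produces the cancellation once the corresponding linear frequency $\xi_\nu$ is set to $0$.)

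The paper avoids this entirely by \emph{not} inducting. Fixing a vertex $\mathfrak m$ and restricting $j$ to its dual cone, it compares $\mathcal H_j^{\Lambda(P)}(\xi)$ in one step with the single-monomial integral $\mathcal H_j^{\{\mathfrak m\}}(\xi)$. The error over $j\in-\mathbb F^*$ is summed using $\min$-bounds (van der Corput decay versus mean-value-theorem growth), with convergence supplied by a rank condition on pairs $\{\mathfrak m,\mathfrak n\}$ for $\mathfrak n\notin\mathrm{cone}(\mathfrak m)$, and by the strict domination $j\cdot\mathfrak n\le(1-\delta)\,j\cdot\mathfrak m$ for $\mathfrak n\in\mathrm{cone}(\mathfrak m)\setminus\{\mathfrak m\}$. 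The evenness of a component of $\mathfrak m$ is used only at the very end: it gives $\mathcal H_j^{\{\mathfrak m\}}(0,\xi_2,\xi_3)=0$ (for $\mathfrak m=(\text{even},\text{odd})$, change $x_1\mapsto-x_1$), and the remaining pieces $\mathcal H_j^{\{\mathfrak m\}}(\xi)-\mathcal H_j^{\{\mathfrak m\}}(0,\xi_2,\xi_3)-\mathcal H_j^{\{\mathfrak m\}}(\xi_1,0,\xi_3)+\mathcal H_j^{\{\mathfrak m\}}(0,0,\xi_3)$ and $\mathcal H_j^{\{\mathfrak m\}}(\xi_1,0,\xi_3)$ are absorbed by further $\min$-arguments. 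This one-step reduction never has to reason about the Newton polyhedron of a sub-polynomial, which is precisely the obstruction your plan runs into. If you want to repair your approach, you would have to replace ``remove one vertex'' by the paper's ``reduce to the vertex monomial on each dual cone,'' which is no longer an induction on $|\Lambda(P)|$.
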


\begin{proof}
In view of (\ref{au0}), it suffices to consider the sum over $j\in \mathbb{Z}(\mathbb{F})=(-\mathbb{F}^*)\cap \mathbb{Z}_{\ge 0}^2$.
We shall consider the case $\mathbb{F}=\mathfrak{m}=(even,odd)$ only.  Let $$m(\xi_3,\mathfrak{m},\mathfrak{n})= \min\{ |\xi_32^{ j\cdot \mathfrak{m}}|^{-c}, |\xi_3  2^{j\cdot \mathfrak{m}}|, |\xi_32^{j\cdot \mathfrak{n}}|^{-c},  |\xi_32^{j\cdot \mathfrak{n}}|\}.$$  
Observe that $\Lambda(P)\setminus\{\mathfrak{m}\}= \left(\Lambda(P)\setminus \rm{cone}(\mathfrak{m})\right)\cup \left( (\Lambda(P)\setminus\{\mathfrak{m}\})\cap \rm{cone}(\mathfrak{m})\right)$.  Thus,
apply the telescoping sum and use the decay from (\ref{jq501}) and the mean value theorem to have
\begin{align*}
 \sum_{j\in \mathbb{Z}(\mathbb{F})}|\mathcal{H}_j^{\Lambda(P)}(\xi)-\mathcal{H}_j^{\{\mathfrak{m}\} }(\xi) |&\lesssim  \sum_{\mathfrak{n}\in \Lambda(P)\setminus \rm{cone}(\mathfrak{m})} \left(\sum_{j\in \mathbb{Z}(\mathbb{F})}m(\xi_3,\mathfrak{m},\mathfrak{n})\right)\\
 &  + \sum_{\mathfrak{n}\in (\Lambda(P)\setminus\{\mathfrak{m}\})\cap \rm{cone}(\mathfrak{m}) }  \left(\sum_{j\in \mathbb{Z}(\mathbb{F})}m(\xi_3,\mathfrak{m},\mathfrak{n})\right).
\end{align*}
The first part is bounded by $C$   from $\text{rank}(\{\mathfrak{m}\}\cup ( \Lambda(P)\setminus \rm{cone}(\mathfrak{m})))= 2$
and the second part is bounded by $ \sum_j 2^{-\delta|j|}\le C$ from $|\xi_32^{j\cdot \mathfrak{n}}|\le |\xi_3 2^{(1-\delta)j\cdot \mathfrak{m}}| $ for $\mathfrak{n}\in (\Lambda(P)\setminus\{\mathfrak{m}\})\cap \rm{cone}(\mathfrak{m})$. So it suffices to treat $\mathcal{H}_j^{\{\mathfrak{m}\} }(\xi)$. From $ \mathcal{H}_j^{\{\mathfrak{m}\} }(0,\xi_2,\xi_3)=\mathcal{H}_j^{\{\mathfrak{m}\} }(0,0,\xi_3)=0$ due to $\int \frac{dt_1}{t_1}=0$ for $\mathfrak{m}=(even,odd)$, one has
\begin{align*}
&\sum_{j\in \mathbb{Z}(\mathbb{F})}|[\mathcal{H}_j^{\{\mathfrak{m}\} }(\xi)-\mathcal{H}_j^{\{\mathfrak{m}\} }(0,\xi_2,\xi_3)]-[\mathcal{H}_j^{\{\mathfrak{m}\} }(\xi_1,0,\xi_3)-\mathcal{H}_j^{\{\mathfrak{m}\} }(0,0,\xi_3)]| \phi(\xi_22^{j_2})\\
&\qquad \lesssim \sum_{j\in \mathbb{Z}(\mathbb{F})}\min\{  |\xi_12^{j_1}|, |\xi_12^{j_1}|^{-c},|\xi_22^{j_2}|\}\phi(\xi_22^{j_2}) \lesssim 1\ \text{and}\ \\
&\sum_{j\in \mathbb{Z}(\mathbb{F})}|[\mathcal{H}_j^{\{\mathfrak{m}\} }(\xi)-\mathcal{H}_j^{\{\mathfrak{m}\} }(0,\xi_2,\xi_3)]  \phi^c(\xi_22^{j_2}) |\\
&\qquad\lesssim \sum_{j\in\mathbb{Z}(\mathbb{F})}\min\{  |\xi_12^{j_1}|, |\xi_12^{j_1}|^{-c},|\xi_22^{j_2}|^{-c}\}\phi^c(\xi_22^{j_2}) \lesssim 1.
\end{align*}
Finally, one knows that  $$\sum_{j\in \mathbb{Z}(\mathbb{F})}|\mathcal{H}_j^{\{\mathfrak{m}\} }(\xi_1,0,\xi_3)  |\lesssim \sum_{j\in\mathbb{Z}(\mathbb{F})}\min\{  |\xi_12^{j_1}|, |\xi_12^{j_1}|^{-c},|\xi_32^{j\cdot \mathfrak{m}}|,|\xi_32^{j\cdot \mathfrak{m}}|^{-c}\} \lesssim 1.$$  
Therefore, we obtain the first part of (\ref{025}). Next by the same argument, one can show the second part of (\ref{025}). Finally, one can apply the Littlewood-Paley inequalities for $|\xi_32^{ j\cdot \mathfrak{n}}|\approx 2^{\ell_3}$ to handle the sum of $\mathcal{H}_j^{\Lambda(P)}(\xi)-\mathcal{H}_j^{\{\mathfrak{m}\} }(\xi) $. Next  apply the Littlewood-Paley inequality again for $ |\xi_12^{j_1}|\approx 2^{\ell_1}$ and $ |\xi_22^{j_2}|\approx 2^{\ell_2}$ to handle the last two summation to obtain (\ref{025h}).   See Section 4 in \cite{CWW} and  see Chapter 7  in \cite{K} for the higher dimensional case.
\end{proof} 
By Proposition \ref{prop006}, we proved the sufficient parts of Theorems \ref{th21} and \ref{th22}.  We shall prove their necessity parts in  (\ref{9431}).

\subsection{Major-Minor Arc Estimate}\label{Sec6}
\begin{proposition}\label{prop001} [Major-Minor-Arc]
 There exist $C,c>0$ independent of $\xi$ such that  
  \begin{align}\label{082} 
  \sum_{j_1:j\in \mathbb{Z}^2(\mathbb{F})}  |H^{\rm{discrete}}_{j}(\xi_1,\xi_2,\xi_3) |\phi^{\rm{major,minor}}_j(\xi)&\le C2^{-cj_2}\ \text{if $P\in \mathbb{Z}^{\ge 2}[t_1,t_2]$},\nonumber\\
  \sum_{j_1:j\in \mathbb{Z}_{\approx}^2(\mathbb{F})}  |H^{\rm{discrete}}_{j}(\xi_1,\xi_2,\xi_3) |\phi^{\rm{major,minor}}_j(\xi)&\le C2^{-cj_2}\ \text{if $P\in \mathbb{Z}^{1}[t_1,t_2]$}. \end{align} 
\end{proposition}
\begin{proof}
Recall (\ref{010}) and (\ref{011}). 
By (\ref{640}) in Lemma \ref{lem002},
\begin{align*} 
  |H^{\rm{discrete}}_{j}(\xi)|\phi^{\rm{major,minor}}_j(\xi) & \lesssim \sum_{t_2\sim 2^{j_2}} \sum_{2^{j_2/10}\le q\le 2^{j_1/10}} \sum_{a: (a_3,q)=1} \phi_{j,q,a}(\xi) \nonumber \\
&   \times
|S^{t_2}\left(\frac{(a_1,0,a_3)}{q}\right)|\left(|\mathcal{H}^{t_2}_{j_1}(\beta_1,0,\beta_3) |+O( 2^{-cj_1}) \right).
\end{align*}
The     sum of the error $O(2^{-cj_1})$ over $j_1:j_1\ge j_2$ gives a bound $C 2^{-cj_2}$. 
Next one can control the sum of the main term over $j_1:j_1\ge j_2$  by
\begin{align}\label{pia}
& \sum_{j_1:j\in \mathbb{Z}^2(\mathbb{F})}  \sum_{2^{j_2/10}\le q\le 2^{j_1/10}} \sum_{a: (a_3,q)=1}  \frac{1}{2^{j_2}}\sum_{t_2\sim 2^{j_2}}|S^{t_2}\left(\frac{(a_1,0,a_3)}{q}\right)\phi_{j,q,a}(\xi)  \mathcal{H}^{t_2}_{j_1}(\beta_1,0,\beta_3)  |    \\
&\le  \sum_{2^{j_2/10}\le q\le 2^{j_1/10} } \sum_{a: (a_3,q)=1} \frac{1}{2^{j_2}}  \sum_{t_2\sim 2^{j_2}} | S^{t_2}\left(\frac{(a_1,0,a_3)}{q}\right) |   \phi_{j,q,a}(\xi)  \sum_{j_1\in \mathbb{Z}_+} |\mathcal{H}_{j_1}^{t_2}(\beta_1,0,\beta_3) |.\nonumber
\end{align}
Under the condition that (i) $P(t_1,t_2)\in \mathbb{Z}^{\ge 2}[t_1,t_2]$ or (ii) $P(t_1,t_2)\in \mathbb{Z}^1[t_1,t_2]$ with $ j_1\le 100d j_2$, we are able to apply  the Gauss-sum average decay in Proposition \ref{lem055}:
  $$\frac{1}{2^{j_2}}  \sum_{t_2\sim 2^{j_2}} |S^{t_2}\left(\frac{(a_1,0,a_3)}{q}\right) |\phi^c\left(\frac{q}{2^{j_2/10}}\right)  \le Cq^{-\delta}$$ 
  because the condition $j_1\le 100d j_2$ in (ii) implies that $  q<2^{j_1/10}\le 2^{10dj_2}$.
  By this combined with the uniform estimate $\sum_{j_1\in \mathbb{Z}_+} |\mathcal{H}_{j_1}^{t_2}(\beta_1,0,\beta_3) | \le C$   in $t_2$ in Lemma \ref{lem004}, we obtain that
\begin{align*}
\text{RHS of (\ref{pia})}&\le  \sum_{2^{j_2/10}\le q } \sum_{a: (a_3,q)=1}  q^{-c}
    \phi_{j,q,a}(\xi)    \le 2^{-j_2/10}
\end{align*} 
where  the second inequality follows from   the disjointness of $\xi$ in $a$ for each fixed $q$ and the lacunary distribution of $q$ in Lemma \ref{le32}.
Therefore, we proved (\ref{082}).
\end{proof}

 \section{Non-Major-Arc-Estimate when $P(t)=t_1R_1(t_2)+R_0(t_2)$ and $j_1\gg j_2$}\label{Sec72}
In view of (\ref{414b}), (\ref{082}), to finish the non-major arc estimate (\ref{nnmm}), there remains:
 $$ \sum_{j_1:j\in \mathbb{Z}_{\gg}^2(\mathbb{F})}  |H^{\rm{discrete}}_{j}(\xi_1,\xi_2,\xi_3) (\phi_j^{\rm{minor}}(\xi_3)+\phi^{\rm{major,minor}}_j(\xi_3))\le C2^{-cj_2}\ \text{if $P\in \mathbb{Z}^{1}[t_1,t_2]$} $$ 
 where $\phi_j^{\rm{minor}}(\xi_3)+\phi^{\rm{major,minor}}_j(\xi_3)=1-\phi_j^{\rm{major}}(\xi_3)$. Recall that
$$\phi_{j,q,a}(\xi)  =\Gamma_{a_1/q}\left(\xi_1\right)\Gamma_{a_2/q}\left(\xi_2\right)\phi\left(\frac{\xi_3-a_3/q }{2^{-j\cdot \mathfrak{m}}2^{j_1/10}}\right)$$ and set
\begin{align*} 
\psi^{\rm{variant1}}_{j}(\xi)&:=  1-\sum_{1\le q\le 2^{100dj_2}} \sum_{a : (a_3,q)=1}\phi_{j,q,a}(\xi),\\
\psi^{\rm{variant2}}_{j}(\xi)&:= \sum_{2^{j_2/10}\le q\le 2^{100dj_2}} \sum_{a: (a_3,q)=1} \phi_{j,q,a}(\xi).
\end{align*}
We note that
\begin{align*}
\psi^{\rm{variant1}}_{j}(\xi)+\psi^{\rm{variant2}}_{j}(\xi)\equiv  1-\phi^{\rm{major}}_{j}(\xi)\ \text{on $[0,1]^3$}.\end{align*}
\begin{proposition}\label{prop885}
Suppose that $P(t)=t_1 R_1(t_2)+R_0(t_2)$ where  $deg(R_1)<d$. Set
\begin{align*}
H^{\rm{discrete}}_j(\xi)&=\sum_{t_1\sim 2^{j_1},t_2\sim 2^{j_2}}e^{-2\pi i(\xi_1t_1+\xi_2t_2+\xi_3(t_1 R_1(t_2)+R_0(t_2)) )}.
\end{align*}
Then for all $\xi\in \mathbb{R}^3$, there exist  $C>0$ and $c>0$ such that
\begin{align}
\sum_{j_1:j\in \mathbb{Z}^2_{\gg}(\mathbb{F})} \psi^{\rm{variant1}}_{j}(\xi) |H^{\rm{discrete}}_j(\xi)|\le C2^{-cj_2},\label{mmex}\\
\sum_{j_1:j\in \mathbb{Z}^2_{\gg}(\mathbb{F})} \psi^{\rm{variant2}}_{j}(\xi) |H^{\rm{discrete}}_j(\xi)|\le C2^{-cj_2}.\label{9437}\end{align}
   \end{proposition}
   \begin{proof}[Proof of (\ref{9437})]
   We use the fact $2^{j_2/10}<q<2^{100dj_2}$ for  (\ref{110a}) to have
   \begin{align*} 
&  \sum_{j_1:j\in \mathbb{Z}^2_{\gg}(\mathbb{F})}|H^{\rm{discrete}}_{j}(\xi)|\psi^{\rm{variant2}}_j(\xi_3) \\
  & \lesssim  \sum_{2^{j_2/10}\le q \le 2^{100d j_2} } \sum_{a: (a_3,q)=1} \frac{1}{2^{j_2}}  \sum_{t_2\sim 2^{j_2}} | S^{t_2}\left(\frac{(a_1,0,a_3)}{q}\right) |   \phi_{j,q,a}(\xi)  \sum_{j_1\in \mathbb{Z}_+} |\mathcal{H}_{j_1}^{t_2}(\beta_1,0,\beta_3) |\\
  &  \lesssim  \sum_{2^{j_2/10}\le q \le 2^{100d j_2} } \sum_{a: (a_3,q)=1} q^{-\delta} \phi_{j,q,a}(\xi)  \sum_{j_1\in \mathbb{Z}_+} |\mathcal{H}_{j_1}^{t_2}(\beta_1,0,\beta_3) |\le C2^{-cj_2}.
\end{align*}
This shows (\ref{9437}).
   \end{proof}
To prove  (\ref{mmex}), we  now set up the double arc as follows.   
Firstly,  note that $\mathbb{F} = \mathfrak{m} = (1,n)$ with $n \leq d$ due to the condition $P \in \mathbb{Z}^{1}[t_1, t_2])$.  By the Dirichlet approximation and the support condition $\psi^{\rm{variant1}}_{j}(\xi) \ne 0$ for $j \in \mathbb{Z}_{\gg}^2(\mathbb{F})$, one can find  a pair $(q, a_3)$ such that $\text{gcd}(a_3, q) = 1$ satisfying the following properties:
\begin{align}\label{5910}
	\left| \xi_3 - \frac{a_3}{q} \right| \le \frac{1}{q 2^{(j \cdot \mathfrak{m} - j_1/10)}} \quad \text{and} \quad 2^{100 d j_2} < q < 2^{j \cdot \mathfrak{m} - j_1/10} \quad \text{where } \mathfrak{m} = (1, n).
\end{align}
Next, for a fixed $t_2$ we regard  $\xi\cdot (t_1,t_2,P(t))$  as the one variable function of $t_1$:
$$\xi_1t_1+\xi_2t_2+\xi_3(t_1 R_1(t_2)+R_0(t_2))=\xi(t_2)t_1+[\xi_2t_2+\xi_3R_0(t_2)] $$
which is a degree-1-polynomial of $t_1$,    whose coefficient of $t_1$ is
\begin{align}\label{584}
	\xi(t_2):= \xi_3R_1(t_2)+\xi_1.
\end{align}
Then our exponential sum becomes $$H^{\rm{discrete}}_j(\xi)=\sum_{t_2\sim 2^{j_2}}\frac{e^{-2\pi i  (\xi_3R_0(t_2)+\xi_2t_2)} }{t_2}\sum_{t_1\sim 2^{j_1}}\frac{ e^{-2\pi i  \xi(t_2)t_1 }}{t_1}.$$ 
 Our idea is to utilize  not only the original arc for $\xi_3$ in (\ref{5910}) but also a new arc  due to Dirichlet approximation  of  $\xi(t_2)$. Given $j=(j_1,j_2)$, $t_2\sim 2^{j_2}$ and $\xi_2(t_2)$ in (\ref{584}), we can find a pair $(a_{j}(t_2),q_{j}(t_2))\in \mathbb{Z}^2$ such that  
\begin{align}\label{0sd}
	\left|\xi(t_2)-\frac{a_j(t_2)}{q_j(t_2)}\right|<\frac{1}{q_j(t_2)2^{(1-\frac{1}{100})j_1}}
\end{align}
$$\ \text{where}\  gcd(a_j(t_2),q_j(t_2))=1\ \text{and}\ 1\le q_j(t_2)\le 2^{(1-\frac{1}{100})j_1}.$$
According to $q_j(t_2)$ in (\ref{0sd}), we split 
$$ \sum_{j_1:j\in \mathbb{Z}^2_{\gg}}|\psi^{\rm{variant1}}_j(\xi_3)H^{\rm{discrete}}_j(\xi)|\le  \sum_{j_1:j\in \mathbb{Z}^2_{\gg}} M^{\rm{good}}_j(\xi)+ \sum_{j_1:j\in \mathbb{Z}^2_{\gg}}M^{\rm{bad}}_j(\xi)$$ where
\begin{align}
M^{\rm{good}}_j(\xi)&:=\frac{\psi^{\rm{variant1}}_j(\xi_3)}{2^{j_2}}\sum_{t_2\sim 2^{j_2}: q_j(t_2)>1 }\left| \sum_{t_1\sim 2^{j_1}}\frac{ e^{-2\pi i  \xi(t_2)t_1 }}{t_1}\right|,\notag\\
M^{\rm{bad}}_j(\xi)&:=\frac{\psi^{\rm{variant1}}_j(\xi_3)}{2^{j_2}} \sum_{t_2\sim 2^{j_2}:q_j(t_2)=1} \left|\sum_{t_1\sim 2^{j_1}}\frac{ e^{-2\pi i  \xi(t_2)t_1 }}{t_1}\right|.\label{94b}
\end{align}
Now, we shall show that
\begin{align}\label{69v}
	\sum_{j_1:j\in \mathbb{Z}^2_{\gg}} M^{\rm{good}}_j(\xi)+ \sum_{j_1:j\in \mathbb{Z}^2_{\gg}}M^{\rm{bad}}_j(\xi)\le C2^{-cj_2}
\end{align}
\subsection{Estimate of $M^{\rm{good}}_j(\xi)$}
We  claim that there exists  $c>0$ such that
\begin{align}\label{ksm1}
	\left|\sum_{t_1\sim 2^{j_1}}\frac{ e^{-2\pi i  \xi(t_2)t_1 }}{t_1}\right| \le 2^{-cj_1}.
\end{align}
To show (\ref{ksm1}), we start with the case $2^{\frac{1}{10} j_1}\le q_j(t_2)\le  2^{j_1(1-\frac{1}{100})}$ in (\ref{0sd}). For this case, apply the Weyl sum estimate in (\ref{43k}) to have $$ \left| \sum_{t_1\sim 2^{j_1}} e^{-2\pi i  t_1\xi(t_2) }\right| \le 2^{-cj_1}2^{j_1}.$$ This with the summation by parts shows that $\left|\sum_{t_1\sim 2^{j_1}} e^{-2\pi i \xi(t_2)t_1 }\frac{1}{t_1}\right| \le 2^{-c'j_1}$.\\
There remains the case
$   2\le q_j(t_2)< 2^{\frac{1}{10} j_1}$ in (\ref{0sd}). Then rewrite the sum of (\ref{ksm1}) using the cutoff function $\chi_{j_1}$ in (\ref{aria}) and shall
 show that
\begin{align}\label{ksm}
 \sum_{t_1\in \mathbb{Z}}  \frac{ e^{-2\pi i  t_1\xi(t_2) }\chi_{j_1}(t_1)}{t_1}=O( 2^{-j_1}).
\end{align}
\begin{proof}[Proof of (\ref{ksm})]
Fix $t_2\sim 2^{j_2}$.	From (\ref{0sd}), remind that  $ (a_j(t_2),q_j(t_2)) $ and $\xi(t_2)=\beta_j(t_2) +\frac{a_j(t_2)}{q_j(t_2)}$, we have $ |\beta_j(t_2)|\le  \frac{2^{-(1-\frac{1}{10})j_1}}{q_j(t_2)} $ and $2\le q_j(t_2)<2^{\frac{1}{10} j_1}$.  For simplicity, we write $  (a_j(t_2),q_j(t_2),\beta_j(t_2))= (a',q',\beta')$. Next by taking $t_1=\mu q'+\ell$  for  some $\mu\in \mathbb{Z}$ with $\ell=1,\cdots,q'$,   express LHS of (\ref{ksm}) as the double sum
	$$ \sum_{\ell=1}^{q'}\sum_{\mu\in \mathbb{Z}}\frac{ e^{-2\pi i  (\mu q'+\ell)(\beta'+a'/q') } }{ \mu q'+\ell }   \chi_{j_1} (\mu q' +\ell) = \sum_{\ell=1}^{q'}  e^{-2\pi i a' \ell  /q'  } \left(\sum_{\mu\in \mathbb{Z}}\frac{ e^{-2\pi i  (\mu q'+\ell) \beta } }{ \mu q'+\ell }   \chi_{j_1}(\mu q'+\ell)\right).  $$
Due to $|\beta' |\le  \frac{2^{-(1-\frac{1}{10})j_1}}{q'}$ and $2\le q'\le 2^{j_1/10},$	one can apply the van der Corput theorem for $\sum_{\mu}$ to obtain that
	$$\sum_{\mu\in \mathbb{Z}}\frac{ e^{-2\pi i  (\mu q'+\ell) \beta' } }{ \mu q'+\ell }  \chi_{j_1}(\mu q' +\ell)=O(2^{-j_1}) +\frac{1}{q'}\mathcal{H}_{j_1}(\beta')\ \text{for}\ \mathcal{H}_{j_1}(\beta')=\int e^{-2\pi i \beta' x_1}  \frac{ \chi_j (x_1)}{x_1}dx_1.$$ 
	  Therefore,    LHS  of (\ref{ksm}) is
	$$ O(2^{-8j_1/10})+\mathcal{H}_{j_1}(\beta') \frac{1}{q'}\sum_{\ell=1}^{q'}  e^{-2\pi i a' \ell  /q'  } = O(2^{-8j_1/10})$$  because $  \sum_{\ell=1}^{q'}  e^{-2\pi i a' \ell /q' } =0$ due to $q'\ge 2$ and $\text{gcd}(q',a')=1$.  So we obtain (\ref{ksm}).  
\end{proof}
So, we have proved (\ref{ksm1}).
\subsection{Estimate of $M^{\rm{bad}}_j(\xi)$}
\begin{definition}
 Let $j\in \mathbb{Z}_{\gg}^2(\mathbb{F})$. Suppose that   $\xi$ satisfies 
(\ref{5910}) where $2^{100 dj_2}<q<2^{j\cdot \mathfrak{m} -j_1/10}$ for  $\mathfrak{m}=(1,n)$. Then given $1\le q_j(t_2)\le 2^{j_1(1-\frac{1}{100})}$ in (\ref{0sd}), we define $$V(j_1,j_2,\xi)=\{t_2\sim 2^{j_2}:q_{j}(t_2)=1\}.$$ 
\end{definition}
Then in (\ref{94b}),  one can express
\begin{align*} 
	\sum_{j_1:j\in \mathbb{Z}^2(\mathbb{F})} M^{\rm{bad}}_j(\xi)= \sum_{j_1:j\in \mathbb{Z}^2(\mathbb{F})} \frac{\phi^{\rm{variant1}}_j(\xi_3)}{2^{j_2}} \sum_{t_2\in V(j_1,j_2,\xi)} \left|\sum_{t_1\sim 2^{j_1}}\frac{ e^{-2\pi i  \xi(t_2)t_1 }}{t_1}\right|.
\end{align*}
To complete  (\ref{69v}),  we shall find $C >0$ independent of $\xi$ such that 
\begin{align}\label{0bb}
	\sum_{j_1:j\in \mathbb{Z}^2(\mathbb{F})} \frac{\phi^{\rm{variant1}}_j(\xi_3)}{2^{j_2}} \sum_{t_2\in V(j_1,j_2,\xi)} \left|\sum_{t_1\sim 2^{j_1}}\frac{ e^{-2\pi i  \xi(t_2)t_1 }}{t_1}\right|\le C2^{-j_2}.
\end{align}
Our proof of (\ref{0bb}) is based on the following lemma.
\begin{lemma}\label{lem944}
Consider   $\psi_{(j_1,j_2)}^{\rm{varinat1}}(\xi)\ne 0$ and  $\psi^{\rm{varinat1}}_{(j_1',j_2)}(\xi)\ne 0$ where $(j_1,j_2),(j_1',j_2)\in \mathbb{Z}_{\gg}^2(\mathbb{F}).$ Here $j_1,j_1'$ can be same.
 If $t_2\in V(j_1,j_2,\xi)\ \text{and}\ t_2'\in V(j_1',j_2,\xi)$ for a fixed $j_2$, then it holds that
\begin{align}\label{890}
 R_1(t_2)=R_1(t_2').
\end{align}
Moreover, we have
  for a given pair of  $j_2$ and $\xi$,
\begin{align}\label{0ff}
	\left|\bigcup_{j_1:j_1\ge 100dj_2}V(j_1,j_2,\xi) \right|\le \text{deg}(R_1) \ \text{where $R_1$ in (\ref{584}).}\ 
\end{align}
\end{lemma}

\begin{proof}[Proof of (\ref{890})] 
In our proof below, we denote $B=O(A)$ if $|B|\le |A|$.
	Let $t_2\in V(j_1,j_2,\xi)$ and $t_2'\in V(j_1',j_2,\xi)$. Then for $j=(j_1,j_2)$ and $j'=(j_1',j_2)$,
	$$q_{j}(t_2)=q_{j'}(t_2')=1\ \text{ for $t_2,t_2'\sim 2^{j_2}$}.$$
	 Let $j_1\le j_1'$ without loss of generality. Then from (\ref{0sd}) and $q_j(t_2)=q_{j'}(t_2')=1$, it holds that
\begin{align*} 
 \xi_3R_1(t_2)+\xi_1=\xi(t_2)=\frac{a_j(t_2)}{q_j(t_2)}+\beta_j(t_2)=\text{integer}+O(2^{-(1-\frac{1}{100})j_1}).
\end{align*}
 Here, we can notice from (\ref{0sd})  that  $O(2^{-(1-\frac{1}{100})j_1})\le 2^{-(1-\frac{1}{100})j_1} $.  Similarly, 
\begin{align*} 
\xi_3R_1(t_2')+\xi_1=\xi(t_2')=\frac{a_{j'}(t_2')}{q_{j'}(t_2')}+\beta_{j'}(t_2')= \text{integer}+O(2^{-(1-\frac{1}{100})j_1}).
\end{align*}
since $2^{-(1-\frac{1}{100})j'_1}\le 2^{-(1-\frac{1}{100})j_1}$. Their difference is 
\begin{align}\label{499c}
 \xi_3(R_1(t_2)-R_1(t_2') )= \text{integer}+O(2\times 2^{-(1-\frac{1}{100})j_1}).
\end{align}
Nexxt, invoke  Dirichlet approximation in (\ref{5910}) for the support condition of $\xi_3$ in $\psi_j^{\rm{variant1}}(\xi)$. Then one has  $(q,a_3)$ with $\text{gcd}(a_3,q)=1$ with
$\beta_3=\xi_3-\frac{a_3}{q} $   such that  
	\begin{align}\label{591}
|\beta_3|\le \frac{1}{ q\cdot2^{(j\cdot \mathfrak{m}-j_1/10)}}\ \text{and}\ 2^{100 dj_2}<q<2^{j\cdot \mathfrak{m} -j_1/10}\le 2^{j_1+dj_2-j_1/10}\le 2^{j_1\frac{91}{100}}
\end{align}
where the  last two inequalities follow  from   $\mathfrak{m}=(1,n)$ and  $2^{j_1}\ge 2^{100d j_2}$.
On the other hand
$ |R_1(t_2)-R_1(t_2')| \le  2^{2j_2d}$ due to (\ref{0122}) and $t_2\sim 2^{j_2}$. This combined with   (\ref{591}) and $2^{(j\cdot \mathfrak{m}-j_1/10)} =2^{j_1+nj_2-j_1/10}\ge 2^{j_1-j_1/10}$,
\begin{align}\label{kjh1}
|\beta_3 (R_1(t_2)-R_1(t_2'))| \le \frac{2^{2dj_2}}{ q\cdot 2^{j\cdot \mathfrak{m}- j_1/10}}\le \frac{2^{2dj_2}}{q2^{(1-1/10)j_1}}.
\end{align}
	With this and the   size conditions    (\ref{499c}) and (\ref{kjh1}) , we   have 
	\begin{align}\label{04gg}
	 \frac{a_3}{q} (R_1(t_2)-R_1(t_2') )&= \xi_3(R_1(t_2)-R_1(t_2') )-\beta_3 (R_1(t_2)-R_1(t_2'))\nonumber\\
&= \text{integer}+O(2\times 2^{-(1-\frac{1}{100})j_1}) +O\left(\frac{2^{2dj_2}}{q2^{(1-1/10)j_1}}\right).
	 \end{align}
Assume that $R_1(t_2)-R_1(t_2')\ne 0$. Then from $\text{gcd}(a_3,q)=1$  in (\ref{04gg}) and $q>2^{100d j_2}\gg  2^{2dj_2}\ge  |R_1(t_2)-R_1(t_2')|\ge 1$, one can find the two integers
$\tilde{a}$ and $\tilde{q}$ satisfying
$$ \frac{a_3}{q} (R_1(t_2)-R_1(t_2') )=\frac{\tilde{a}}{\tilde{q}}\ \text{such that}\ 
 \text{gcd}(\tilde{q},\tilde{a})=1\ \text{and}\  \frac{q}{2^{j_2 2d}}\le \tilde{q}\le q.$$
	Now, we insert  this fraction $\frac{\tilde{a}}{\tilde{q}}$  into the LHS of  (\ref{04gg}) and obtain that  
$$\left|\frac{\tilde{a}}{\tilde{q}}-\text{integer}\right|=O(2\times 2^{-(1-\frac{1}{100})j_1}) +O(\frac{2^{2dj_2}}{q2^{(1-1/10)j_1}}).$$
However,
$|\frac{\tilde{a}}{\tilde{q}}-\text{integer}|\ge \frac{1}{\tilde{q}}\ge \frac{1}{q}$.  Thus, we have
$$   \frac{1}{q}\le  2\times 2^{-(1-\frac{1}{100})j_1}+\frac{2^{2dj_2}}{ q2^{(1-1/10)j_1}}\ \text{with}\   2^{100d j_2}<q<2^{91j_1/100}\
\text{ in (\ref{591})}.$$
The above inequality  can be rewritten as $$ \frac{1}{2q}\le \frac{1}{q}\left(1-\frac{2^{2dj_2}}{ q2^{(1-1/10)j_1}}\right)\le  2\times 2^{-(1-\frac{1}{100})j_1},$$
so that
$$\frac{1}{q}\le  4\times 2^{- \frac{99}{100}j_1} \ \text{and}\ 2^{100d j_2}<q<2^{91j_1/100}   $$ which leads   $(1/4)\times 2^{\frac{99}{100}j_1}\le  q<2^{\frac{91}{100}j_1}$. This is   a contradiction. Therefore,
$R_1(t_2)-R_1(t_2')=0$. This  yields (\ref{890}).  
\end{proof}
Now we can prove (\ref{0ff}).
\begin{proof}[Proof of (\ref{0ff})]
Fix  one number  $t_2'\in V(j_1,j_2,\xi)$. If $t_2\in \bigcup_{j_1:j_1\ge 100dj_2}V(j_1,j_2,\xi)$, then $R_1(t_2)-R_1(t_2')=0$ by Lemma \ref{lem944}. Therefore,   $t_2\in \bigcup_{j_1:j_1\ge 100dj_2}V(j_1,j_2,\xi)$ is  a solution of the polynomial equation
$t_2: R_1(t_2)-R_1(t_2')=0$. Hence, we obtain that $|\bigcup_{j_1:j_1\ge 100dj_2}V(j_1,j_2,\xi)|\le \text{deg}(R_1).$
\end{proof}

\begin{proof}[Proof of (\ref{0bb})]
	Let $\psi_{j}^{\rm{varinat1}}(\xi)\ne 0$.  Set $U(j_2,\xi)=\bigcup_{j_1:j_1\ge 100dj_2}V(j_1,j_2,\xi)$.  Fix $t_2'\in U(j_2,\xi)$. Then by (\ref{890}), for every $t_2\in  U(j_2,\xi)$,  it holds that $R_1(t_2)=R_1(t_2')$, that is $\xi(t_2)=\xi(t_2')$ because  $\xi(t_2)=\xi_3R_1(t_2)+\xi_1$.  
	Thus, one can express\begin{align}\label{613w}
 \xi(t_2)=c(j_2) \ \text{which is independent of $t_2$ and $j_1$} 
 \end{align}	
where
$ \{\xi(t_2): t_2\in \bigcup_{j_1:j_1\ge 100d j_2}V(j_1,j_2,\xi)\}$ is   a singleton for a fixed $\xi_1,\xi_3$ and $j_2$.

	Let $\chi_A$ be a characteristic function on $A$.  Then by using
	 $|U(j_2,\xi)|\le \text{deg}(R_1) $ in (\ref{0ff}) and  $\xi(t_2)=c(j_2)$ for $t_2\in U(j_2,\xi)$ in (\ref{613w}), one can obtain that
	\begin{align*} 
		 &\frac{1}{2^{j_2}} \sum_{j_1: j_1\ge 100dj_2}  \sum_{t_2\in V(j_1,j_2,\xi) }  \left| \sum_{t_1\sim 2^{j_1}}\frac{e^{-2\pi i  \xi(t_2)t_1 }}{t_1}\right|\\
		 &\le
		 \frac{1}{2^{j_2}} \sum_{t_2\sim 2^{j_2} } \sum_{j_1: j_1\ge 100dj_2} \chi_{U(j_2,\xi)}(t_2) \left| \sum_{t_1\sim 2^{j_1}}\frac{e^{-2\pi i  \xi(t_2)t_1 }}{t_1}\right| \\
		  &\le
		 \frac{1}{2^{j_2}}  \sum_{t_2\sim 2^{j_2} }  \chi_{U(j_2,\xi)}(t_2) \sum_{j_1: j_1\ge 100dj_2}  \left|\sum_{t_1\sim 2^{j_1}}\frac{e^{-2\pi i c(j_2)t_1 }}{t_1}\right|  
		  \le  C \frac{\text{deg}(R_1)}{2^{j_2}}  
	\end{align*}
 because $\sum_{1\le j_1<\infty}\left|\sum_{t_1\sim 2^{j_1}} \frac{e^{-2\pi i  a t_1 }}{t_1}\right|\le C$ with $C$ independent of $a$. So our proof of (\ref{0bb}) is completed.
\end{proof} 

\section{Major Arc Estimate ($\rm{Major}^{\sharp}$ and $\rm{Major}^{\flat}$ Estimates)}\label{Sec7}
Recall in (\ref{33}) that
$$\phi^{\rm{major}}_j(\xi)= \sum_{ 1\le q\le 2^{j_2/10}} \sum_{a_3: (a_3,q)=1} \phi\left(\frac{\xi_3-a_3/q }{2^{-j\cdot \mathfrak{m}}2^{j_1/10}}\right).$$ In this section,  we shall show that
 $$\sum_{j\in \mathbb{Z}^2(\mathbb{F})} \phi^{\rm{major}}_j(\xi)|H^{\rm{discrete}}_{j}(\xi)| \le C.$$
  \subsection{
 Major-$\sharp$-Arc Estimate}
 
\begin{proposition}\label{prop002} $[\rm{Major}^{\sharp}]$ 
There exists $C>0$ and $c>0$ independent of $\xi$ such that
\begin{align}\label{0401}
\sum_{j_1:j\in \mathbb{Z}^2(\mathbb{F})} \phi^{\rm{major,\sharp}}_j(\xi)\left| H^{\rm{discrete}}_{j}(\xi) \right|   \le C2^{-cj_2}. 
\end{align}
\end{proposition}

\begin{proof}
Recall (\ref{010}) and (\ref{011}). By Lemma \ref{lem002} and  $\beta_i=\xi_i-\frac{a_i}{q}  $ for $i=1,2,3$, we have
\begin{align}\label{100p}
   \phi^{\rm{major,\sharp}}_j(\xi)\left| H^{\rm{discrete}}_{j}(\xi) \right|
&  \lesssim   \sum_{ 1\le q\le 2^{j_2/10}} \sum_{a: (a_3,q)=1}  \phi_{j,q,a}(\xi)  \phi^c\left( \frac{  \beta_3 2^{\mathfrak{m}\cdot j} }{2^{j_2/10}} \right)\\
&\times\sum_{t_2\sim 2^{j_2}} \frac{1}{2^{j_2}}\left(|\mathcal{H}^{t_2}_{j_1}(\beta_1,0,\beta_3) |+ 2^{-cj_1} \right). \nonumber
\end{align}
By utilizing 
$\phi^c\left(\frac{\beta_32^{j\cdot \mathfrak{m}}}{2^{j_2/10}}\right)  |\mathcal{H}^{t_2}_{j_1}(\beta_1,0,\beta_3) |=O(2^{-cj_2})$  for $c>0$ in the first part of (\ref{015}),  
\begin{align*} 
&   |\mathcal{H}^{t_2}_{j_1}(\beta_1,0,\beta_3) |  
 \le 2^{-cj_2/2}|\mathcal{H}^{t_2}_{j_1}(\beta_1,0,\beta_3) |^{1/2}\le  q^{-c}2^{-cj_2/4}|\mathcal{H}^{t_2}_{j_1}(\beta_1,0,\beta_3) |^{1/2}
\end{align*}
where $ q\le 2^{j_2/10}$.
By this  we obtain that  up to $O(2^{-cj_2})$, 
\begin{align*}
 \sum_{j_1:j\in \mathbb{Z}^2(\mathbb{F})}RHS\ \text{of}\ (\ref{100p})& \le \sum_{ 1\le q\le 2^{j_2/10}} \sum_{a: (a_3,q)=1}  \phi_{j,q,a}(\xi)  \frac{q^{-c}2^{-j_2}}{2^{cj_2/4}}\sum_{t_2\sim 2^{j_2}}  \sum_{j_1\ge 0}  |\mathcal{H}^{t_2}_{j_1}(\beta_1,0,\beta_3) |^{1/2} \\
& \le  \sum_{ 1\le q\le 2^{j_2/10}} \sum_{a: (a_3,q)=1}  \phi_{j,q,a}(\xi)   \frac{q^{-c}}{2^{ cj_2/4}}  \le C2^{-cj_2/4}
\end{align*}
where the second inequality  follows from the second part of (\ref{015}) and the third inequality follows from the dyadic distribution of $q$ for the major arc in the second part of Lemma \ref{le32}.
This is our desired result.
\end{proof}
 \subsection{
 Major-$\flat$-Arc Estimate}
To prove 
 $  \sum_{j\in \mathbb{Z}^2(\mathbb{F})} |H^{\rm{discrete}}_{j}(\xi)  |\phi^{\rm{major,\flat}}_j(\xi_3)\le C $,
 we are able to reduce matters to the two-parameter continuous case.
 \begin{proposition}\label{prop003} $[\rm{Major}^{\flat}]$ 
Let $ \beta_i = \xi_i-\frac{a_i}{q}  $ for $i=1,2,3$ and recall that $$\phi^{\rm{major,\flat}}_j(\xi)=\sum_{ 1\le q\le 2^{j_2/10}} \sum_{a: (a_3,q)=1}   \phi_{j,q,a}(\xi)  \phi\left( \frac{ \beta_3  2^{\mathfrak{m}\cdot j} }{2^{j_2/10}} \right). $$ If $\phi_{j,q,a}(\xi)  \phi\left( \frac{ \beta_3  2^{\mathfrak{m}\cdot j} }{2^{j_2/10}} \right)\ne 0$, then it holds that \begin{align}\label{029}
& H^{\rm{discrete}}_{j}(\xi)  =  S\left(\frac{a}{q}\right)\mathcal{H}_{j}^{\Lambda(P)}\left( \beta\right) +   E_j(a,q,\xi) 
\end{align}
where the  sum of the error terms is estimated by
\begin{align*}
 \sum_{j_1:j\in \mathbb{Z}^2(\mathbb{F})}\sum_{ 1\le q\le 2^{j_2/10}} \sum_{a: (a_3,q)=1} \phi_{j,q,a}(\xi)  \phi\left( \frac{ \beta_3  2^{\mathfrak{m}\cdot j} }{2^{j_2/10}} \right)|E_j(a,q,\xi)|=O(2^{-j_2/10}).
\end{align*}
Finally,   suppose that  ${\bf N}(P,D_{\{-{\bf e}_1,-{\bf e}_2\}})$ has the vertex-evenness property.  Then there exists a constant $C>0$ independent of $\xi$ such that
\begin{align}\label{943}
\sum_{j\in \mathbb{Z}^2(\mathbb{F})}|H^{\rm{discrete}}_{j}(\xi)  |\phi^{\rm{major,\flat}}_j(\xi) \le C.
\end{align}
\end{proposition}
 
\begin{proof}[Proof of (\ref{029}) ]
Recall
\begin{align*}
H^{\rm{discrete}}_{j}(\xi)=&  \sum_{t_1}\sum_{t_2} e^{-2\pi i (\xi_1t_1+\xi_2t_2+\xi_3 P(t_1,t_2) )}\frac{\chi_{j_1}(t_1)}{t_1}\frac{\chi_{j_2}(t_2)}{t_2}
\end{align*} 
and apply the change of variable $t_1=\mu_1q+\ell_1$  and $t_2=\mu_2q+\ell_2$. Then  for every $\xi$ such that  $ \phi_{j,q,a}(\xi)  \phi\left( \frac{ \beta_3  2^{\mathfrak{m}\cdot j} }{2^{j_2/10}} \right) \ne 0$, we have
\begin{align*}
H^{\rm{discrete}}_{j}(\xi)& =\sum_{\ell_1,\ell_2=1}^q\sum_{\mu_1,\mu_2}    e^{-2\pi i  (\beta +\frac{a }{q})\cdot (\mu_1q+\ell_1,\mu_2q+\ell_2,P(q\mu_1+\ell_1,q\mu_2+\ell_2) )} \\
&\times\frac{\chi_{j_1}( q\mu_1+\ell_1 )}{q\mu_1+\ell_1}\frac{\chi_{j_2}( q\mu_2+\ell_2 )}{q\mu_2+\ell_2}\\
& =\frac{1}{q^2} \sum_{\ell_1,\ell_2=1}^q   e^{-2\pi i    \frac{a }{q}\cdot (\ell_1,\ell_2, P( \ell_1 ,\ell_2)) }q^2  \sum_{(\mu_1,\mu_2)\in\mathbb{Z}^2}F(\mu_1,\mu_2) 
\end{align*}
where the function $F$  is given by $F(\mu_1,\mu_2) $:
$$   e^{-2\pi i   \beta \cdot  (\mu_1q+\ell_1,\mu_2q+\ell_2,P(q\mu_1+\ell_1,q\mu_2+\ell_2) )} \times\frac{\chi_{j_1}( q\mu_1+\ell_1 )}{q\mu_1+\ell_1}\frac{\chi_{j_2}( q\mu_2+\ell_2 )}{q\mu_2+\ell_2}.$$
Next, we are able to apply the Poisson summation formula for $F\in C_0^\infty(\mathbb{R}^2)$, $$\sum_{(\mu_1,\mu_2)\in\mathbb{Z}^2}F (\mu_1,\mu_2) =\sum_{ (w_1,w_2)\in\mathbb{Z}^2}F^{\vee} (w_1,w_2) $$
where $F^{\vee}(w_1,w_2) $,   via the coordinate change  $(q\mu_1+\ell_1, q\mu_2+\ell_2)\rightarrow (y_1,y_2)$, is
\begin{align*} 
  q^{-2} e^{2\pi i\frac{  ( w_1\ell_1 +w_2\ell_2)}{q} }  \int e^{-2\pi i \left( \frac{(w_1 y_1 +w_2 y_2)}{q} +\beta \cdot (y_1,y_2, P(y_1,y_2))\right)}\frac{\chi_{j_1}( y_1)}{y_1}\frac{\chi_{j_2}( y_2)}{y_2}
dy_1dy_2. 
\end{align*}
Thus, we can obtain  (\ref{029}) with
$$E_j(a,q,\xi): = \sum_{w\in \mathbb{Z}^2\setminus\{(0,0)\} } S\left(\frac{(a_1-w_1,a_2-w_2,a_3)}{q}\right)\mathcal{H}_{j}^{\Lambda(P)}\left(\frac{w_1}{q}+\beta_1,\frac{w_2}{q}+\beta_2,\beta_3 \right). $$
To show (\ref{943}), we shall claim that
\begin{align}\label{4times}
\left|\mathcal{H}_{j}^{\Lambda(P)}\left( \frac{w_1}{q}+\beta_1,\frac{w_2}{q}+\beta_2, \beta_3 \right)\right|^{1/2} \lesssim (|2^{j_2/10}(w_1,w_2)|+1)^{-5/2}.
  \end{align}
  \begin{proof}[Proof of (\ref{4times})]
Let $\Phi(w,\beta,y): =  (\frac{w_1}{q} +\beta_1) y_1  +(\frac{w_2}{q} +\beta_2)  y_2+\beta_3P( y_1, y_2)$. Then  
\begin{align*}
&\mathcal{H}_{j}^{\Lambda(P)}\left( \frac{w_1}{q}+\beta_1,\frac{w_2}{q}+\beta_2, \beta_3 \right)  =\int e^{2\pi i\Phi(w,\beta,y) }\frac{\chi_{j_1}(y_1)}{y_1}\frac{\chi_{j_2}(y_2)}{y_2}
dy_1dy_2.
\end{align*}
By $\beta_32^{\mathfrak{m}\cdot j} \le 2^{j_2/10}$ in  (\ref{029}) and $1\le q\le 2^{j_2/10}$,  one  has the lower bounds of partial derivatives of the phase:
\begin{align}\label{phase}
|\partial_{y_1} \Phi(w,\beta,y)|=\left| \frac{w_1}{q}+\beta_1+O( \frac{ \beta_3  2^{j\cdot \mathfrak{m}} }{2^{j_1}}) \right|\ge \frac{1}{4} \left| \frac{w_1}{q}\right|\ge \frac{1}{4} \left| \frac{w_1}{2^{j_2/10}}\right|\\
|\partial_{y_2}\Phi(w,\beta,y)|=\left| \frac{w_2}{q}+\beta_2+O( \frac{ \beta_3  2^{j\cdot \mathfrak{m}} }{2^{j_2}}) \right|\ge \frac{1}{4} \left| \frac{w_2}{q} \right|\ge \frac{1}{4}\left| \frac{w_2}{2^{j_2/10}}\right|\notag
\end{align}
 Moreover, by  (\ref{aria})  one has
\begin{align*}
\partial_{y_1}^{m}\left( \frac{\chi_{j_1}(y_1)}{y_1} \right)\lesssim \frac{  \chi_{[2^{j_1},2^{j_1}+1/2]}(|y_1|)+ \chi_{[2^{j_1-1}-1/2,2^{j_1-1}]}(|y_1|) }{2^{j_1}} +\frac{\chi_{j_1}(y_1)}{2^{(1+m)j_1}}\\
   \partial_{y_2}^{m}\left( \frac{\chi_{j_2}(y_2)}{y_2} \right)\lesssim \frac{  \chi_{[2^{j_2},2^{j_2}+1/2]}(|y_2|)+ \chi_{[2^{j_2-1}-1/2,2^{j_2-1}]}(|y_2|) }{2^{j_2}} +\frac{\chi_{j_2}(y_2)}{2^{(1+m)j_2}}\notag 
    \end{align*}
    where $\chi_{[a,b]}$ is a characteristic function. 
This with   (\ref{phase})  enables us to apply the   integration by parts $5$ times   for the above integrals $\int dy_1dy_2$ to have (\ref{4times}).
\end{proof}
By (\ref{4times}), we obtain that
\begin{align*} 
|E_j(a,q,\xi)|\le \sum_{w\in \mathbb{Z}^2\setminus\{(0,0)\}}  (|2^{j_2/10}(w_1,w_2)|+1)^{-5/2} \left|\mathcal{H}_{j}^{\Lambda(P)}\left( \frac{w_1}{q}+\beta_1,\frac{w_2}{q}+\beta_2, \beta_3 \right) \right|^{1/2}.
\end{align*}
Using this  with  $ \sum_{ 1\le q\le 2^{j_2/10}} \sum_{a: (a_3,q)=1}   \phi_{j,q,a}(\xi)  \phi\left( \frac{ \beta_3  2^{\mathfrak{m}\cdot j} }{2^{j_2/10}} \right) \le 1$ and the uniform boundedness of 
 $\sum_{j_1\in \mathbb{Z}_+}\left|\mathcal{H}_{j}^{\Lambda(P)}\left( \frac{w_1}{q}+\beta_1,\frac{w_2}{q}+\beta_2, \beta_3 \right) \right|^{1/2}\lesssim 1$ in $\beta,w,q$ and  $j_2$ in Proposition \ref{prop006}, we obtain the decay $2^{-cj_2}$ in the error term estimate of (\ref{029}). 
\end{proof}
\begin{proof}[Proof of (\ref{943})]
By (\ref{029}), it suffices to show that 
\begin{align}\label{494}
\sum_{j\in\mathbb{Z}^2(\mathbb{F})} \sum_{ 1\le q\le 2^{j_2/10}} \sum_{a: (a_3,q)=1}   \phi_{j,q,a}(\xi)  \phi\left( \frac{ \beta_3 2^{\mathfrak{m}\cdot j} }{2^{j_2/10}} \right)|S\left(\frac{a}{q}\right)\mathcal{H}_{j}^{\Lambda(P)}\left( \beta\right)|\le C
\end{align} 
where $\beta=\xi-a/q$. For the estimate (\ref{494}), we utilize the boundedness $\sum_{j} |\mathcal{H}_{j}^{\Lambda(P)}\left(  \beta  \right)  | $ of the continuous double Hilbert transform in Proposition \ref{prop006}    under the vertex evenness assumption. This is the only place where we need the     the vertex evenness assumption. Next by using   the Gauss sum decay $O(q^{-c})$ as above and the dyadic distribution of $q$  associated with major arc in Lemma \ref{le32}, to obtain (\ref{494}), which implies (\ref{943}).
\end{proof}
 This completes the proof of Proposition \ref{prop003}. So,
we have proved Propositions \ref{minor arc}, \ref{prop001}, \ref{prop885}, \ref{prop002} and \ref{prop003}. Therefore, we proved the sufficient part of Main Theorem 1 and also finished the proof of the $\ell^2$ boundedness of ${\bf H}^{\rm{discrete}}$ in Main Theorem 2.
 \section{$\ell^{p}$ boundedness of ${\bf H}^{\rm{discrete}}$}\label{Sec8}
Until  the last section, under the vertex-evenness assumption,  we have proved that
$$\sum_{j\in \mathbb{Z}_+^2} |H_{j}^{\rm{discrete}}(\xi_1,\xi_2,\xi_3)|\le C$$
which implies (\ref{15p}). 
By this with ${\bf H}^{\rm{discrete}}_{(N_1,N_2)}(f)(x) = [H^{\rm{discrete}}_{(N_1,N_2)}]^{\vee}*f(x)$  for  $x\in \mathbb{R}^3$, we have
$$ {\bf H}^{\rm{discrete}} f(x)= \int e^{2\pi i x\cdot \xi}\sum_{j\in \mathbb{Z}^2}H_{j}^{\rm{discrete}}(\xi_1,\xi_2,\xi_3)\widehat{f}(\xi)d\xi.$$
 In this section we  prove  Main Theorem \ref{main} by showing under the evenness hypothesis,
\begin{align*}
\|{\bf H}^{\rm{discrete}} \|_{\ell^p(\mathbb{Z}^3)\rightarrow \ell^p(\mathbb{Z}^3)}\le C\ \text{for all $1<p<\infty$}
\end{align*}
    by adapting the similar argument in \cite{IW}. 
    Assume that $m: \mathbb{R}^{3} \rightarrow \mathbb{C}$ is a bounded  function supported in the cube $[-1 / 2,1 / 2]^{3}$ and is periodically extended to $\mathbb{R}^3$ with the property that for any $p \in(1, \infty)$,
\begin{align}\label{IW1}
\left\|(m \cdot \widehat{g})^{\vee}\right\|_{L^{p}\left(\mathbb{R}^{d}\right)} \leq B_{p}\|g\|_{L^{p}\left(\mathbb{R}^{d}\right)}
\end{align}
for all Schwartz function $g: \mathbb{R}^{3} \rightarrow \mathbb{C}$. Given a finite set  $Y\subset \mathbb{N}$,  let $$\mathcal{R}(Y)=\left\{a/ q: q \in Y, a=(a_1,a_2,a_3)  \in \mathbb{Z}^{3},\text{gcd}(a, q)=1\right\}.$$
Given this $Y$ and  $\eta \in(0,1]$, we define 
\begin{align}\label{91k}
m_{\eta, Y}(\xi)=\sum_{a / q \in \mathcal{R}(Y)} m\left(\frac{\xi-a / q}{\eta}\right),
\end{align}
which is periodic as $m_{\eta, Y}(\xi+n)=m_{\eta, Y}(\xi)$ for all $n\in \mathbb{Z}^3$. Denote $Z_N=\mathbb{Z}\cap [1,N].$
 \begin{lemma}\label{lem82}
 \cite{IW} For any $\delta>0$ and $p \in(1, \infty)$, there are constants $A_{\delta}$ and $C_{p, \delta}$ having the following property: for any $N \geq A_{\delta}$ there is a set of integers $Y_{N}=Y_{N, \delta}$,
 $$
 Z_{N} \subset Y_{N} \subset Z_{e^{N^{\delta}}},
 $$
satisfying that  the operator $T_{N}=T_{N, \eta}$, for any $\eta \leq e^{-N^{2 \delta}}$, defined by the Fourier multiplier $m_{\eta, Y_{N}}$ in (\ref{91k}) extends to a bounded operator on $\ell^{p}\left(\mathbb{Z}^{d}\right)$, with
\begin{align*}
 \left\|T_{N}(f)\right\|_{\ell^{p}\left(\mathbb{Z}^{d}\right)} \leq C_{p, \delta}(\ln N)^{2 / \delta}\|f\|_{\ell^{p}\left(\mathbb{Z}^{d}\right)} .
\end{align*}
 The constant $A_{\delta}$ may depend only on $\delta$ and $d$; the constant $C_{p, \delta}$ may depend only on $\delta, d$, the exponent $p$, and the constants $B_{p}$ in (\ref{IW1}).
\end{lemma}
  \begin{remark}
  This result was further developed in a more general setting in \cite{Tao}, where an improved bound was established. Nonetheless, for our purposes, the original version in \cite{IW} is sufficient.
  \end{remark}
 \subsection{Asymptotic Estimate}
Given $j=(j_1,j_2)$ and $(q,a)\in \mathbb{Z}\times \mathbb{Z}^3$ with $q\ge 1$,  set  
\begin{align}\label{36aa}
\Phi_{j}(\xi-a/q):=\phi\left( \frac{\xi_1-a_1/q}{2^{-j_1(9/10)}} \right) \phi\left( \frac{\xi_2-a_2/q}{2^{-j_2(9/10)}} \right) \phi\left(\frac{\xi_3-a_3/q }{2^{-j\cdot \mathfrak{m}} 2^{j_2(1/10)}}\right).
\end{align}
  Then for $\xi \in \mathbb{R}^3$,
 \begin{align}\label{8hb}
\sum_{1\le q\le 2^{j_2/10}}\sum_{a:(a,q)=1}\Phi_{j}(\xi-a/q)\le 1 .\end{align}
The above inequality follows since the supports  of   $\xi\rightarrow\Phi_{j} (\xi-a/q)$  given by $$ (-2^{-(9/10)j_1}, 2^{-(9/10)j_1}) \times  (-2^{-(9/10)j_2}, 2^{-(9/10)j_2}) \times  (-2^{-j\cdot \mathfrak{m}}2^{j_2/10},2^{-j\cdot \mathfrak{m}}2^{j_2/10})+\frac{a}{q}$$  are  disjoint  in $(q,a)$. Here, we used the fact that $|a/q-a'/q'|\ge 1/|qq'|\ge 2^{-j_2/5}$ while  every side length of the box is smaller than   $2^{-(9/10)j_2}$.  
Moreover, 
\begin{align}\label{8hb1}
\sum_{1\le q\le 2^{j_2/10}}\sum_{a:(a,q)=1}  \Phi_{j}(\xi-a/q)\le \sum_{1\le q\le 2^{j_2/10}}\sum_{a:(a,q)=1}\prod_{i=1}^3\phi\left( \frac{\xi_i-a_i/q}{q^{-4}} \right). \end{align}
In the RHS of (\ref{8hb1}), observe that, as in Lemma \ref{le32},  $q> q'$ implies  $q>5q'$.  
We start with proving the following   asymptotic formula.
\begin{proposition}\label{prop812}
Let $\Phi^{\rm{major}}_j(\xi):=  \sum_{1\le q\le 2^{j_2/10}}\sum_{a:(a,q)=1}\Phi_{j}(\xi-a/q)$, which is defined due to (\ref{8hb}). Then
\begin{align}\label{59}
	&\sum_{j_1:j \in \mathbb{Z}^2(\mathbb{F})} |H^{\rm{discrete}}_{j}(\xi)| (1-\Phi_j^{\rm{major}}(\xi))=O(2^{-cj_2}).
\end{align}

\end{proposition}
\subsection{Proof of Proposition \ref{prop812}} 
Recall    $\phi^{\rm{major},\flat}_j(\xi)$ in (\ref{34p}) and rewrite it as
\begin{align}\label{mfy}
 \phi^{\rm{major},\flat}_{3,j}(\xi_3):= \sum_{ 1\le q_3\le 2^{j_2/100}} \sum_{a_3: (a_3,q_3)=1}\phi\left( \frac{ (\xi_3-a_3/q_3) 2^{\mathfrak{m}\cdot j} }{2^{j_2(\eta/2)}} \right)
\end{align}
where $\eta$ is taken to be a small positive number later.  Then we are able to apply (\ref{0053}),(\ref{082}),(\ref{mmex}),(\ref{9437}) and (\ref{0401}) to observe that for some $c>0$,
\begin{align} \label{8181}
&	 \sum_{j_1:j \in \mathbb{Z}^2(\mathbb{F})} |H^{\rm{discrete}}_{j}(\xi)| (1-\phi^{\rm{major},\flat}_{3,j}(\xi_3))=
O(2^{-cj_2}).
\end{align}
Set
\begin{align}\label{85ss}
\phi^{\rm{major},\flat}_{1,j_1}(\xi_1)= \sum_{ 1\le q_1\le 2^{j_2/50}} \sum_{a_1: (a_1,q_1)=1} \phi\left(\frac{\xi_1-a_1/q_1}{2^{-(9/10)j_1}}\right).
\end{align}
Fix $j_2$. Then we firstly claim that there exists $C,c>0$ independent of $j,\xi$ such that
\begin{align}\label{5009}
\sum_{j_1:j \in \mathbb{Z}^2(\mathbb{F})} |H^{\rm{discrete}}_{j}(\xi)| (1-\phi^{\rm{major},\flat}_{1,j_1}(\xi_1)) \phi^{\rm{major},\flat}_{3,j}(\xi_3)\le 
C2^{-cj_2}.
\end{align}
\begin{proof}[Proof of (\ref{5009})]
Using the Dirichlet approximation to $\xi_1$ satisfying $1-\phi^{\rm{major},\flat}_{1,j_1}(\xi_1)\ne 0$ in (\ref{85ss}),  
  choose $q_1=q_1(\xi_1,j)\in [2^{j_2/50}, 2^{(9/10)j_1}]$ and $a_1=a_1(\xi_1,j)$ so that 
\begin{align}\label{pjk}
 |\xi_1-a_1/q_1|<2^{-(9/10)j_1}\ \text{and}\ \text{gcd}(q_1,a_1)=1.
\end{align}
Consider the case  $q_1  \in [2^{j_1/10},2^{j_1(9/10)}]$  in (\ref{pjk}). For this case with  $t_2\sim 2^{j_2}$ fixed, we shall show the 1D-Weyl-sum estimate:
\begin{align}\label{8898}
\sum_{t_1\sim 2^{j_1}} e^{2\pi i[\xi_1t_1+\xi_2t_2+\xi_3 P(t_1,t_2) ]}\le 2^{j_1-cj_1}
\ \text{ for some $c>0$.}\end{align} 
To claim (\ref{8898}), from $P(t_1,t_2)=\sum_{m\ge 0} R_m(t_2)t_1^m$, we first write the phase as
$$\xi_1t_1+\xi_2t_2+\xi_3 P(t_1,t_2)= \sum_{m\ge 2} \xi_3 R_m(t_2) t_1^m+( \xi_1+\xi_3R_1(t_2))t_1+\xi_2t_2+\xi_3R_0(t_2).$$
Using  (\ref{mfy}) and the fact $|R_1(t_2)t_1|\lesssim  \sum_{\mathfrak{n}\in\Lambda(P)}|t^{\mathfrak{n}}|\sim 2^{j\cdot \mathfrak{m}}$ in (\ref{jq501}), we have
$$\left|\left( \xi_3-\frac{a_3}{q_3}\right)R_1(t_2)\right|\le C\frac{2^{\eta j_2/2}}{2^{j\cdot \mathfrak{m}}}\frac{2^{j\cdot \mathfrak{m}}}{2^{j_1}}\le 2^{-9j_1/10}.$$
By this and (\ref{pjk}),    approximate   the coefficient $\xi_1+\xi_3R_1(t_2)$  of $t_1^1$ for the above phase,   $$\left|\left(\xi_1+\xi_3R_1(t_2)\right)-\left(\frac{a_1}{q_1}+\frac{a_3}{q_3}R_1(t_2)\right)\right|\le 2\times 2^{-9j_1/10} .$$
Since $2^{j_1/10}\le q_1\le 2^{9j_1/10}$ and $q_3\le 2^{j_2/100}$ with $j_1\ge j_2$ and $\text{gcd}(a_1,q_1)=1$, one can choose an integer pair  $\tilde{a},\tilde{q}$ with $\text{gcd}(\tilde{a},\tilde{q})=1$ satisfying that 
 $$ \frac{a_1}{q_1}+\frac{a_3}{q_3}R_1(t_2)=\frac{a_1q_3+ q_1 a_3R_1(t_2)}{q_1q_3}=\frac{\tilde{a}}{\tilde{q}}\ \text{and}\ 2^{j_1/20}\le \tilde{q}\le 2^{j_1-j_1/20}.$$
 This yields the desired 
Weyl sum decay in (\ref{8898}). \\
Next, consider the case  $q_1 \in [2^{j_2/50},2^{j_1/10}] $  in (\ref{pjk}). Recall    $(q_3,a_3) $     in (\ref{mfy}):
\begin{align}\label{krom1}
|\xi_3-a_3/q_3|<2^{-j\cdot \mathfrak{m}}2^{(\eta/2)j_2}\ \text{and}\ \text{gcd}(q_3,a_3)=1\ \text{with}\ 1\le q_3\le 2^{j_2/100}. 
\end{align}
Take $\tilde{q}=\text{lcm}(q_1,q_3)$. Indeed, write $\tilde{q}=q_1\alpha_1,\tilde{q}= q_3 \alpha_3$ with $1\le \alpha_1\le 2^{j_2/100}$ and $1\le \alpha_3$.  Next put $(\tilde{q},\tilde{a}_1)=\alpha_1(q_1,a_1)$   and $(\tilde{q},\tilde{a}_3)=\alpha_3(q_3,a_3)$  so that $\text{gcd}(\tilde{q},\tilde{a}_1,\tilde{a}_3)=1$. Then in (\ref{pjk}) and (\ref{krom1}), it holds that
\begin{align}\label{kom1}
 |\xi_1-\frac{\tilde{a}_1}{\tilde{q}}|<2^{-(9/10)j_1}\ \text{and}\ |\xi_3-\frac{\tilde{a}_3}{\tilde{q}}|<2^{-j\cdot \mathfrak{m}}2^{(\eta/2)j_2}\ \text{with} \ q_1\le \tilde{q}\le q_1^2.
\end{align} 
 For fixed $\xi,j$ with $\tilde{q},\tilde{a}_1,\tilde{a}_3$ in  (\ref{kom1}) where $\tilde{\beta}_\nu= \xi_\nu- \frac{\tilde{a}_{\nu}}{\tilde{q}}=\xi_\nu- \frac{a_{\nu}}{q}$ ($\nu=1,3$), by modification of Lemma \ref{lem002}, one has
 \begin{align}
  H^{\rm{discrete}}_{j}(\xi)   =  \label{bu9} 
 \sum_{t_2\sim 2^{j_2}} \frac{ e^{2\pi i\xi_2t_2}}{t_2}  S^{t_2}\left(\frac{(\tilde{a}_1,0,\tilde{a}_3)}{\tilde{q}}\right)\mathcal{H}_{j_1}^{t_2}(\tilde{\beta}_1,0,\tilde{\beta}_3)+O(2^{-cj_1}).  
\end{align}
Moreover, one has the average Gauss-sum estimate:
\begin{align}\label{cco}
\frac{1}{2^{j_2}}\sum_{t_2\sim 2^{j_2}}|S^{t_2}\left(\frac{(\tilde{a}_1,0,\tilde{a}_3)}{\tilde{q}}\right)|\le O(|\tilde{q}|^{-\delta})\ \text{with $q_1\le \tilde{q}\le q_1^2$}.
\end{align}
\begin{proof}[Proof of (\ref{cco})]
Let $P(t_1,t_2)\in \mathbb{Z}^{\ge 2}[t_1,t_2]$. Then by the condition $ \text{gcd}(\tilde{a}_1,\tilde{q})=\alpha_1\le 2^{j_2/100}$ with $\tilde{q}\ge 2^{j_2/50}$ satisfying the hypothesis above (\ref{110}), one can  apply (\ref{110})   to obtain (\ref{cco}). Let $P(t_1,t_2)\in \mathbb{Z}^{1}[t_1,t_2]$. If $\tilde{q}\le 2^{100dj_2 }$, then by the condition $ \text{gcd}(\tilde{a}_1,\tilde{q})=\alpha_1\le 2^{j_2/100}$ with $ 2^{j_2/50}\le \tilde{q}\le 2^{100dj_2 }$, we are able to apply (\ref{110a})   to obtain (\ref{cco}).  If  $\tilde{q}> 2^{100dj_2 }$, then $q_1>2^{99dj_2 }$ and $1\le q_3<2^{j_2/100}$, which implies that $$\frac{\tilde{a}_1+\tilde{a}_3R_1(t_2)}{\tilde{q}}=\frac{a_1}{q_1}+\frac{a_3}{q_3}R_1(t_2)=\frac{a_1q_3+a_3q_1R_1(t_2)}{q_1q_3}=\frac{\tilde{a}(t_2)}{\tilde{q}(t_2)}\ \text{with $\text{gcd}(\tilde{a}(t_2),\tilde{q}(t_2))=1$
}$$ 
satisfying $\tilde{q}^{1/2}\le \tilde{q}(t_2)\le \tilde{q}$ and $\tilde{q}(t_2)|\tilde{q}$.  For this case, 
$$ S^{t_2}\left(\frac{(\tilde{a}_1,0,\tilde{a}_3)}{\tilde{q}}\right)=\frac{1}{\tilde{q}}\sum_{t_1=1}^{\tilde{q}} e^{2\pi i t_1  \left(\frac{\tilde{a}_1+\tilde{a}_3R_1(t_2)}{\tilde{q}} \right)}=0.$$
This completes the proof of   (\ref{cco}).\end{proof}
Recall that $q_1 \in [2^{j_2/50},2^{j_1/10}]$   and $q_3 \in [1,2^{j_2/100}]$ and  $\tilde{\beta}_1 , \tilde{\beta}_3$ depend on $q_1,q_3$ such that    $\tilde{\beta}_1 =\xi_1-a_1/q_1$ and $\tilde{\beta}_3=\xi_3-a_3/q_3$ satisfying the support condition (\ref{kom1}).
Then by similar arguments in Section \ref{Sec6} combined with (\ref{bu9}),(\ref{cco}) and (\ref{015}), we can obtain that for some $c>0$,
\begin{align*} 
 \text{LHS of (\ref{5009}) }&\lesssim  \sum_{q_1,q_3; dyadic} 2^{-cj_2} q_1^{-\delta/4} q_3^{-\delta/2} \lesssim 2^{-cj_2}
\end{align*}
which  yields (\ref{5009}).
\end{proof} 
Next, we define
\begin{align}\label{82tt}
\phi^{\rm{major},\flat}_{2,j_2}(\xi_2):= \sum_{ 1\le q_2\le 2^{j_2/20}} \sum_{a_2: (a_2,q_2)=1} \phi\left(\frac{\xi_2-a_2/q_2}{2^{-(9/10)j_2}}\right) 
\end{align}
and
  claim that for some $c>0$,
\begin{align}\label{5449}
\sum_{j_1:j \in \mathbb{Z}^2(\mathbb{F})} |H^{\rm{discrete}}_{j}(\xi)| (1-\phi^{\rm{major},\flat}_{2,j_2}(\xi_2))\phi^{\rm{major},\flat}_{1,j_1}(\xi_1) \phi^{\rm{major},\flat}_{3,j}(\xi_3)=
O(2^{-cj_2}).
\end{align} \begin{proof}[Proof of (\ref{5449})]
Apply Lemma \ref{lem002} for the support condition  $\phi^{\rm{major},\flat}_{1,j_1}(\xi_1) \phi^{\rm{major},\flat}_{3,j}(\xi_3)$ with $\tilde{q}=\text{lcm}(q_1,q_2)$ where $q_1<2^{j_2/50}$ and $q_3<2^{j_2/100}$ satisfying (\ref{pjk}) and (\ref{krom1}). Then it suffices to consider the following sums:
 \begin{align}
&\sum_{t_2\sim 2^{j_2}} \frac{ e^{2\pi i\xi_2t_2}}{t_2}  
S^{t_2}\left(\frac{(\tilde{a}_1,0,\tilde{a}_3)}{\tilde{q}}\right)\mathcal{H}_{j_1}^{t_2}(\tilde{\beta}_1,0,\tilde{\beta}_3)\nonumber\\
&=\frac{1}{\tilde{q}}\sum_{\ell_1=1}^{\tilde{q}}e^{2\pi i [\frac{\tilde{a}_1}{\tilde{q}}\ell_1+\frac{\tilde{a}_3}{\tilde{q}}Q_0(\ell_1)]} \left(\sum_{t_2\sim 2^{j_2}} e^{2\pi i [(\xi_2+\frac{\tilde{a}_3}{\tilde{q}}Q_1(\ell_1))t_2+()t_2^2+\cdots]} \frac{  \mathcal{H}_{j_1}^{t_2}(\tilde{\beta}_1,0,\tilde{\beta}_3)}{t_2}\right)\label{co24}
\end{align}
where $ \frac{\tilde{a}_3}{\tilde{q}}=\frac{a_3}{q_3}$ and $P(\ell_1,t_2)= Q_0(\ell_1)+Q_1(\ell_1)t_2+\cdots+Q_n(\ell_1)t_2^n$. 
If $(1-\phi^{\rm{major},\flat}_{2,j_2}(\xi_2))\ne 0$, then we can take  $q_2=q_2(\xi_2,j)$ and $a_2=a_2(\xi_2,j)$ such that
\begin{align}\label{pjk2}
 |\xi_2-a_2/q_2|<2^{-(9/10)j_2}\ \text{and}\ \text{gcd}(q_2,a_2)=1\ \text{with}\ q_2\in [2^{j_2/20}, 2^{(9/10)j_2}].\end{align}
Fix $\ell_1$ in (\ref{co24}). By (\ref{pjk2}), one knows that $$\left|(\xi_2+\frac{\tilde{a}_3}{\tilde{q}}Q_1(\ell_1))-\left(\frac{a_2}{q_2}+\frac{a_3}{q_3}Q_1(\ell_1)\right)\right|<2^{-(9/10)j_2}$$
where $q_2>2^{j_2/20}$ and $q_3<2^{j_2/100}$ give us the following  reduced fraction:
$$\left(\frac{a_2}{q_2}+\frac{a_3}{q_3}Q_1(\ell_1)\right)=\frac{a_2(\ell_2)}{q_2(\ell_2)}\ \text{with}\ q_2(\ell_2)>2^{j_2/30}\ \text{and}\ \text{gcd}(a_2(\ell_2),q_2(\ell_2))=1.$$
So, by applying 1D Weyl-sum estimate one has 
\begin{align}\label{ko45}
W_{j_2,\ell_1}(t_2):=\sum_{2^{j_2}/2\le s\le t_2} e^{2\pi i \left( (\xi_2+\frac{\tilde{a}_3}{\tilde{q}}Q_1(\ell_1))s+()s^2+\cdots+()s^n\right)}=O(2^{j_2}2^{-\eta j_2}).
\end{align}
Next, fix $\ell_1$ and 
  apply the summation by parts  for the $t_2$-sum in (\ref{co24}) as  \begin{align*}
&\sum_{t_2\sim 2^{j_2}} [W_{j_2,\ell_1}(t_2)- W_{j_2,\ell_1}(t_2-1) ] \frac{  \mathcal{H}_{j_1}^{t_2}(\tilde{\beta}_1,0,\tilde{\beta}_3)}{t_2}\notag  \\
&=  \sum_{t_2\sim 2^{j_2}} W_{j_2,\ell_1}(t_2) \left(\frac{  \mathcal{H}_{j_1}^{t_2}(\tilde{\beta}_1,0,\tilde{\beta}_3)}{t_2}- \frac{  \mathcal{H}_{j_1}^{t_2-1}(\tilde{\beta}_1,0,\tilde{\beta}_3)}{t_2-1}\right)+O(\text{boundary}).
\end{align*}
where the boundary part is controlled by $2^{-\eta j_2} |\mathcal{H}_{j_1}^{t_2^*}(\tilde{\beta}_1,0,\tilde{\beta}_3)|$ for the end points $t_2^*$ in $[2^{j_2-1},2^{j_2}]$. So,  the above
is bounded by $U(j_1,j_2,\xi,q_1, q_3)$ defined as
\begin{align*}
	&2^{-\eta j_2} |\mathcal{H}_{j_1}^{t_2^*}(\tilde{\beta}_1,0,\tilde{\beta}_3)|\\
	&+\sum_{t_2\sim 2^{j_2}} |W_{j_2,\ell_1}(t_2) |\left(\frac{ | \mathcal{H}_{j_1}^{t_2}(\tilde{\beta}_1,0,\tilde{\beta}_3)- \mathcal{H}_{j_1}^{t_2-1}(\tilde{\beta}_1,0,\tilde{\beta}_3)|}{2^{j_2}} +\frac{ | \mathcal{H}_{j_1}^{t_2}(\tilde{\beta}_1,0,\tilde{\beta}_3) |}{2^{2j_2}}\right)  
\end{align*}
As $|e^{2\pi i \tilde{\beta_3}P(x_1,t_2)} -e^{2\pi i \tilde{\beta_3}P(x_1,t_2-1)}|\lesssim  2^{-j_2}| 2^{j\cdot \mathfrak{m}} \tilde{\beta_3}| $, one take the sum over $j_1$ satisfying $|2^{j\cdot \mathfrak{m}} \tilde{\beta_3}|\le 2^{(\eta/2)j_2}$  in (\ref{kom1}), to obtain that  
\begin{align*} 
	\sum_{j_1 }| \mathcal{H}_{j_1}^{t_2}(\tilde{\beta}_1,0,\tilde{\beta}_3)- \mathcal{H}_{j_1}^{t_2-1}(\tilde{\beta}_1,0,\tilde{\beta}_3)|\le \sum_{j_1 } |2^{-j_2}2^{j\cdot \mathfrak{m}} \tilde{\beta_3}|\le C2^{-j_2}2^{(\eta/2)j_2}
\end{align*}
and $\sum_{j_1 }\max_{t_2\sim 2^{j_2}}| \mathcal{H}_{j_1}^{t_2}(\tilde{\beta}_1,0,\tilde{\beta}_3) |\le C$  from the Hilbert transform estimate (\ref{015}).  This combined with (\ref{ko45}) yields that
\begin{align}\label{094g}
	\sum_{j_1:j\in \mathbb{Z}(\mathbb{F}) }U(j_1,j_2,\xi,q_1, q_3)\le C 2^{-(\eta/2)j_2}.
\end{align}
Recall (\ref{mfy}) and (\ref{85ss}). Then $\xi_1,\xi_3$ with $\phi^{\rm{major},\flat}_{1,j_1}(\xi_1) \phi^{\rm{major},\flat}_{3,j}(\xi_3)\ne0$ satisfy that  
$$|\tilde{\beta}_\nu|=|\xi_\nu-a_\nu/q_\nu|\le 1/(10q_{\nu}^2)\ \text{ for $\nu=1,3$ where  $1\le q_1,q_3\le 2^{j_2/(50)}$}$$   where
such $q_\nu$'s for fixed $\xi_\nu$   are dyadic such that $q_\nu\ge 5 q_\nu' $ if $q_\nu\ge q_\nu'$.  
By this together with  (\ref{094g}) and the fact that $|1-\phi^{\rm{major},\flat}_{2,j_2}(\xi_2)|\le 2$ for fixed $j_2$, we obtain that for some $c>0$,
\begin{align*}
	\text{LHS of (\ref{5449})}&\lesssim \sum_{q_1,q_3;dyadic}(q_1q_3)^{-\eta}2^{-(\eta/4)j_2}  =O(2^{-(\eta/4)j_2}).
\end{align*}
We now take $\eta$ to be $10^{-3}$.
\end{proof}
\begin{proof}[Proof of (\ref{59})]
By (\ref{8181}),(\ref{5009}) and (\ref{5449}), we have
\begin{align*} 
	&\sum_{j_1:j \in \mathbb{Z}^2(\mathbb{F})} |H^{\rm{discrete}}_{j}(\xi)| (1-\Phi_j^{\rm{major}}(\xi))\\
&=\sum_{j_1:j \in \mathbb{Z}^2(\mathbb{F})} |H^{\rm{discrete}}_{j}(\xi)|  \phi^{\rm{major},\flat}_{2,j_2}(\xi_2)\phi^{\rm{major},\flat}_{1,j_1}(\xi_1) \phi^{\rm{major},\flat}_{3,j}(\xi_3)(1-\Phi_j^{\rm{major}}(\xi)) +O(2^{-cj_2}).
\end{align*}
Here, one can observe that $\phi^{\rm{major},\flat}_{2,j_2}(\xi_2)\phi^{\rm{major},\flat}_{1,j_1}(\xi_1) \phi^{\rm{major},\flat}_{3,j}(\xi_3)(1-\Phi_j^{\rm{major}}(\xi))\equiv 0$ from the support condition of (\ref{mfy}),(\ref{85ss}) and (\ref{82tt}).
Thus, we proved
 (\ref{59}).
\end{proof}
\begin{proposition}\label{IW3} 
Suppose that every vertex in ${\bf N}(P,D_{\{-{\bf e}_1,-{\bf e}_2\}})$ has an even component. Then
 given  a positive integer $k$ and  $D \geq 2$,  there is an asymptotic formula,
\begin{align}\label{IW5}
	&\sum_{j \in \mathbb{Z}^2(\mathbb{F}): j_2\ge k} H^{\rm{discrete}}_{j}(\xi) \nonumber\\
	&\qquad=\sum_{q\le  k^{D/s}} \sum_{a;(a,q)=1} S (a/ q) m_{k}( \xi -a/q)  +E_{k}(\xi)
\end{align}
where     $\left|E_{k}(\xi)\right| \leq C_{D} k^{-9D/10}$   and the multiplier $ m_{k}( \xi )$ is
\begin{align*}
  \sum_{j=(j_1,j_2)\in\mathbb{Z}^2(\mathbb{F}); j_2\ge k}\phi\left( \frac{\xi_1}{2^{-(1-1/10)k}} \right) \phi\left( \frac{\xi_2}{2^{-(1-1/10)k}} \right) \phi\left(\frac{\xi_3 }{2^{-(k,k)\cdot \mathfrak{m}}2^{k/10}}\right)\mathcal{H}^{\Lambda(P)}_{j}(\xi).
\end{align*}
\end{proposition}

\begin{proof}
Recall  $\Phi^{\rm{major}}_j(\xi)=  \sum_{1\le q\le 2^{j_2/10}}\sum_{a:(a,q)=1}\Phi_{j}(\xi-a/q)$. 
By Propositions \ref{prop812}, we have 
\begin{align*}
	&\sum_{j_1: (j_1,j_2)\in \mathbb{Z}^2(\mathbb{F})}  H^{\rm{discrete}}_{j}(\xi)= \sum_{j_1: (j_1,j_2)\in \mathbb{Z}^2(\mathbb{F})} \Phi^{\rm{major}}_j(\xi)H^{\rm{discrete}}_{j}(\xi) +O(2^{-cj_2})\\
&=\sum_{j_1:j\in\mathbb{Z}^2(\mathbb{F})} \sum_{ 1\le q\le 2^{j_2/10}} \sum_{a: (a,q)=1}  \Phi_{j}(\xi-a/q) S\left(\frac{a}{q}\right)\mathcal{H}_{j}^{\Lambda(P)}\left( \xi-a/q\right) +O(2^{-cj_2})
\end{align*}
where $ \Phi_{j}(\xi-a/q) $ is in (\ref{36aa}).
Moreover, by using $S(a/q)=O(q^{-s})$ again, observe that
$$
\sum_{j_1:j\in\mathbb{Z}^2(\mathbb{F})} \sum_{  j_2^{D/s}\le  q\le 2^{j_2/10}} \sum_{a: (a,q)=1}  \Phi_{j}(\xi-a/q) S\left(\frac{a}{q}\right)\mathcal{H}_{j}^{\Lambda(P)}\left( \xi-a/q\right) +O(2^{-cj_2})
 $$ is bounded by $\sum_{q\ge j_2^{D/s}} q^{-s}\le Cj_2^{-D}$.
 Hence,
\begin{align*}
	&\sum_{j_1: (j_1,j_2)\in \mathbb{Z}^2(\mathbb{F})}  H^{\rm{discrete}}_{j}(\xi)\\
&=\sum_{j_1:j\in\mathbb{Z}^2(\mathbb{F})} \sum_{ 1\le q\le  j_2^{D/s}} \sum_{a: (a,q)=1}  \Phi_{j}(\xi-a/q) S\left(\frac{a}{q}\right)\mathcal{H}_{j}^{\Lambda(P)}\left( \xi-a/q\right)+O( j_2^{-D}).
\end{align*}
Next, by taking the sum of the above terms over $j_2\ge k$ as
\begin{align*}
	&\sum_{(j_1,j_2):j_2\ge k\  \text{and}\ j\in \mathbb{Z}^2(\mathbb{F})}  H^{\rm{discrete}}_{j}(\xi)= O(k^{-D})\\
&+\sum_{(j_1,j_2):j_2\ge k\  \text{and}\ j\in \mathbb{Z}^2(\mathbb{F})}\sum_{ 1\le q<j_2^{D/s} } \sum_{a: (a,q)=1}   \Phi_{j}(\xi-a/q) S\left(\frac{a}{q}\right)\mathcal{H}_{j}^{\Lambda(P)}\left( \xi-a/q\right).
\end{align*}
If $k^{D/s}\le q\le j_2^{D/s}$, then   $S(a/q)=O(q^{-s})=O(q^{-s/10}k^{-9D/10})$.
This combined with (\ref{8hb1}) and the continuous multiplier estimate $\sum_{j_1,j_2}|\mathcal{H}_{j}^{\Lambda(P)}\left( \beta\right)|\le C$  under the  evenness vertex hypothesis of ${\bf N}(P,D_{\{-{\bf e}_1,-{\bf e}_2\}})$, yields that
\begin{align*}
&\sum_{(j_1,j_2):j_2\ge k\  \text{and}\ j\in \mathbb{Z}^2(\mathbb{F})}\sum_{ k^{D/s}\le q<j_2^{D/s} } \sum_{a: (a,q)=1}   \Phi_{j}(\xi-a/q) S\left(\frac{a}{q}\right)\mathcal{H}_{j}^{\Lambda(P)}\left( \xi-a/q\right) \\
&\qquad\qquad=O(k^{-9D/10}).
\end{align*}
Therefore,  we obtain that
\begin{align}\label{pio}
	&\sum_{(j_1,j_2):j_2\ge k\  \text{and}\ j\in \mathbb{Z}^2(\mathbb{F})}  H^{\rm{discrete}}_{j}(\xi)= O(k^{-9D/10})\nonumber\\
&+\sum_{(j_1,j_2):j_2\ge k\  \text{and}\ j\in \mathbb{Z}^2(\mathbb{F})}\sum_{ 1\le q<k^{D/s} } \sum_{a: (a,q)=1}  \Phi_{j}(\xi-a/q) S\left(\frac{a}{q}\right)\mathcal{H}_{j}^{\Lambda(P)}\left( \xi-a/q\right).
\end{align}
Let $\beta_i=\xi_i-a_i/q$ for $i=1,2,3$. 
Note $2^{j\cdot\mathfrak{m}} \ge 2^{(k,k)\cdot\mathfrak{m}} $ if $j_2\ge k$, and $ \mathcal{H}_{j}^{\Lambda(P)}\left( \beta\right)=(|\beta_12^{j_1}|+|\beta_22^{j_2}|+|\beta_32^{j\cdot\mathfrak{m}}|)^{-2c} $. Then 
\begin{align*}
&\text{(i) If $ |\beta_32^{(k,k)\cdot\mathfrak{m}}|\ge 2^{k/10}$, then}\ |\mathcal{H}_{j}^{\Lambda(P)}\left( \beta\right)|^{1/2} =O( |\beta_32^{(k,k)\cdot\mathfrak{m}})=O(2^{-ck/10} ),\\
&\text{(ii) If $ |\beta_12^{k}|\ge 2^{k/10}$, then}\ | \mathcal{H}_{j}^{\Lambda(P)}\left( \beta\right)|^{1/2} =O( |\beta_12^{k}  |^{-c})=O(2^{-ck/10} ),\\
&\text{(iii) If $ |\beta_22^{k}|\ge 2^{k/10}$, then}\  |\mathcal{H}_{j}^{\Lambda(P)}\left( \beta\right)|^{1/2} =O( |\beta_22^{k}  |^{-c})=O(2^{-ck/10} ).
\end{align*}
This combined with (\ref{8hb1}) and $\sum_{j_1} |\mathcal{H}_{j}^{\Lambda(P)}\left( \beta\right)|^{1/2} \le C$ in (\ref{025}), implies that
\begin{align*}
&\sum_{j_2:j_2\ge k\  \text{and}\ j\in \mathbb{Z}^2(\mathbb{F})}\sum_{ 1\le q<k^{D/s} } \sum_{a: (a,q)=1}   \Phi_{j}(\beta) S\left(\frac{a}{q}\right)|\mathcal{H}_{j}^{\Lambda(P)}\left( \beta\right)|\\
&\times\left( 1-\phi\left( \frac{ \beta_1}{2^{-(1-1/10)k}} \right) \phi\left( \frac{ \beta_2}{2^{-(1-1/10)k}} \right) \phi\left( \frac{ \beta_3 2^{\mathfrak{m}\cdot (k,k)} }{2^{k/10}} \right) \right)
=O(2^{-ck})=O(k^{-D}).
\end{align*}
So, we  replace $\phi\left( \frac{ \beta_1}{2^{-j_1(9/10)}} \right) \phi\left( \frac{ \beta_2}{2^{j_2(9/10)}} \right) \phi\left( \frac{ \beta_3 2^{\mathfrak{m}\cdot j} }{2^{j_2/10}} \right)$   in (\ref{pio}) with the narrower support-function: $$\phi\left( \frac{ \beta_1}{2^{-(1-1/10)k}} \right) \phi\left( \frac{ \beta_2}{2^{-(1-1/10)k}} \right) \phi\left( \frac{ \beta_3 2^{\mathfrak{m}\cdot (k,k)} }{2^{k/10}} \right).$$ 
  This yields  the desired asymptotic formula (\ref{IW5}).
\end{proof}
For each vertex $\mathbb{F}$ of ${\bf N}(P,D_{\{-{\bf e}_1,-{\bf e}_2\}})$, we set 
$${\bf H}^{\rm{discrete}}_{\mathbb{F}}f(x):=\int e^{2\pi i x\cdot \xi}\sum_{j\in \mathbb{Z}^2(\mathbb{F})}H_{j}^{\rm{discrete}}(\xi_1,\xi_2,\xi_3)\widehat{f}(\xi)d\xi. $$
 In view of page 380 of \cite{IW},  to prove 
  $\|{\bf H}^{\rm{discrete}}_{\mathbb{F}}  \|_{\ell^{p}(\mathbb{Z}^3)\rightarrow \ell^p(\mathbb{Z}^3)} \le C  $   for  $1<p<\infty$,
  it suffices to prove the following Lemma.
\begin{lemma}\label{IW2}
	Given $\epsilon \in(0,1]$, for any $\lambda \in(0, \infty)$ there are two linear operators $A_{\lambda, \epsilon}$ and $B_{\lambda, \epsilon}$ such that ${\bf H}^{\rm{discrete}}_{\mathbb{F}}f=A_{\lambda, \epsilon}f+B_{\lambda, \epsilon}f$ satisfying
	$$
	\left\|A_{\lambda, \epsilon}\right\|_{\ell^{2} \rightarrow \ell^{2}} \leq C_{\epsilon} / \lambda,
	$$
	and
	$$
	\left\|B_{\lambda, \epsilon}\right\|_{\ell^{r} \rightarrow \ell^{r}} \leq C_{r, \epsilon} \lambda^{\epsilon}\ \text{for any $r \in[2, \infty)$. }
	$$
		The constants  $C_{\epsilon}$ and $C_{r, \epsilon}$ may depend on the coefficients   of $P(t)$. 
\end{lemma}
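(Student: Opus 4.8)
The plan is to carry out the $\ell^{p}$ argument of \cite{IW} (p.~380): for each $\lambda$ we split ${\bf H}^{\rm{discrete}}=A_{\lambda,\epsilon}+B_{\lambda,\epsilon}$ out of the major/minor structure of the two-parameter circle method, with $B_{\lambda,\epsilon}$ gathering the low-complexity (small denominator) major arcs and $A_{\lambda,\epsilon}$ gathering the high-complexity (large denominator) part together with the genuinely negligible errors. By Section~\ref{Sec2} it suffices to treat one dual face $\mathbb{F}=\{\mathfrak{m}\}$ and the sum $\sum_{j\in A}H^{\rm{discrete}}_{j}(\xi_3)$ with $A=(-\mathbb{F}^{*})\cap\mathbb{Z}^2_+\cap\{j_1\ge j_2\}$; the finitely many leftover pieces (the slab $j_2\le p$, the complementary face $j_2>j_1$, and the operator of Section~\ref{Sec9}) are bounded on every $\ell^{p}$ and will be absorbed into $B_{\lambda,\epsilon}$.

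First I would decompose dyadically in $j_2$ and apply Proposition~\ref{IW3} (with $k=2^l-1$ and $k=2^{l+1}-1$) to each block $\Delta_l=\sum_{j\in A,\, 2^l\le j_2<2^{l+1}}H^{\rm{discrete}}_{j}$, obtaining $\Delta_l=\mathcal{N}_l+\mathcal{E}_l$: here $\mathcal{N}_l$ is the major main term, a sum over denominators $q\le 2^{lD/\delta'}$ of Gauss sums $S(a_3/q)$ times a narrow rescaled copy of the one-variable continuous multiplier $m_{2^l-1}-m_{2^{l+1}-1}$ centred at $a_3/q$, and $\mathcal{E}_l$ has $|\mathcal{E}_l(\xi_3)|\lesssim 2^{-l(D-1)}$, hence $\|\mathcal{E}_l\|_{\ell^2\to\ell^2}\lesssim 2^{-l(D-1)}$ by Plancherel. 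The decisive point is that the entire $(t_1,t_2)$-structure is now carried by the one-variable multipliers $m_k$, which by Remark~\ref{remark} and (\ref{025h}) of Proposition~\ref{prop006} are $L^{r}(\mathbb{R})$ multipliers with a constant uniform in $k$, and which satisfy $\|m_k\|_{L^\infty}\le C$ by (\ref{025}).

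I would then form $B_{\lambda,\epsilon}$ from the parts of $\mathcal{N}_l$ with $q\le Q:=\lambda^{C/\delta'}$ summed over all $l$; since $\sum_{l}(m_{2^l-1}-m_{2^{l+1}-1})$ telescopes to a fixed one-variable multiplier, this is, up to the negligible cutoffs, the Ionescu--Wainger operator attached to that fixed multiplier and to the rationals of denominator $\le Q$, weighted by $S(a_3/q)$. Decomposing $q\sim 2^s$ ($2^s\le 2Q$), the lemma of \cite{IW} recalled above (using (\ref{025h}) to verify (\ref{IW1}) for the telescoped multiplier) bounds the $s$-block on $\ell^{r}$ by $O(s^{2/\delta})$ before the arithmetic weights; multiplying by $|S(a_3/q)|\lesssim 2^{-s\delta'}$ and summing the convergent series $\sum_s s^{2/\delta}2^{-s\delta'}$, the logarithmic Ionescu--Wainger loss is absorbed and $\|B_{\lambda,\epsilon}\|_{\ell^{r}\to\ell^{r}}\le C_{r,\epsilon}\lambda^{\epsilon}$ (indeed with only a power-of-$\log\lambda$ loss). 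Dually, $A_{\lambda,\epsilon}$ is the large-denominator ($q>Q$) major part plus the errors $\mathcal{E}_l$; by Plancherel, the sparsity (\ref{012}) of major denominators (so the attached bumps are pairwise disjoint), $\|m_k\|_{L^\infty}\le C$ and $|S(a_3/q)|\lesssim q^{-\delta'}$, the large-denominator part is $O(Q^{-\delta'})=O(\lambda^{-C})\le C_\epsilon/\lambda$ on $\ell^2$; the errors $\mathcal{E}_l$ are handled by peeling off from each its own large-denominator major component (absorbed into the first part) and observing that the genuinely minor remainder has, after restricting the $t_1$-scale to $2^{j_1}\ge\lambda^{c}$ as in Propositions~\ref{minor arc} and \ref{prop001}, sup norm $\lesssim\lambda^{-c}$ on $\ell^2$, the complementary bounded slab $2^{j_1}\le\lambda^{c}$ going into $B_{\lambda,\epsilon}$.

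The step I expect to be the real obstacle is Proposition~\ref{IW3} and the bookkeeping around it: one has to push the two-parameter circle method of Sections~\ref{Sec3}--\ref{Sec6} far enough to route the \emph{entire} dependence on $(t_1,t_2)$ through the one-variable multipliers $m_k$ (so that the one-dimensional Ionescu--Wainger lemma genuinely applies in the arithmetic variable $\xi_3$), and to verify that the error it leaves behind is not merely bounded but, after the large-denominator split, negligible in $\ell^2$ uniformly in the truncation level. A subsidiary technical matter is re-running the estimates of Sections~\ref{Sec4}--\ref{Sec6} so as to display the decay in $j_2$ and in $j_1$ simultaneously, which is what makes the dyadic pieces summable.
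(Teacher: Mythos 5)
Your high-level scheme is the right one (circle-method decomposition, error to $A$, major part routed through one-dimensional Ionescu--Wainger), and your treatment of the error pieces and the large-denominator major part on $\ell^{2}$ is essentially the paper's $A^{1}_{\lambda,\epsilon}$ and $A^{2}_{\lambda,\epsilon}$. But there is a genuine gap in the construction of $B_{\lambda,\epsilon}$, and it is exactly where you wave your hands.

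You propose to bound the $\ell^{r}$ norm of the small-denominator major operator, whose multiplier is
$\sum_{q}\sum_{a_3}S(a_3/q)\,m_{k}(\xi_3-a_3/q)\,\phi(\cdot)$,
by first applying the Ionescu--Wainger lemma to the \emph{unweighted} multiplier $\sum_{q}\sum_{a_3}m_{k}(\xi_3-a_3/q)\phi(\cdot)$ on a dyadic shell $q\sim 2^{s}$, and then ``multiplying by $|S(a_3/q)|\lesssim 2^{-s\delta'}$'' and summing in $s$. That step is not an admissible operator-norm manipulation on $\ell^{r}$ for $r\neq 2$: the Gauss-sum weights $S(a_3/q)$ depend on both $a_3$ and $q$, so they are not a scalar you can pull out of the operator, and the Ionescu--Wainger lemma is stated precisely for the \emph{unweighted} family of translates $m((\xi-a/q)/\eta)$ over a fixed $Y_N$ (not over a shell $q\sim 2^s$ in any case). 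A triangle inequality over $(q,a_3)$ also fails, since there are $\sim Q^{2}$ bump pieces each of $\ell^{r}$ norm $\approx 1$. This is the whole difficulty that forces the paper (following Ionescu--Wainger) to introduce the auxiliary averaging operator $V_{J}$ with multiplier $v_{J}(\xi_3)=J^{-2}\sum_{n\in Z_J^2}e^{-2\pi i\xi_3 P(n)}$: one writes the main term as $U_{N}\circ V_{J}$ plus a remainder. Here $U_{N}$ is the bare (unweighted) Ionescu--Wainger operator, which is bounded on $\ell^{r}$ by the lemma and (\ref{025h}); $V_{J}$ is a bona fide bounded $\ell^{r}$ operator; and the discrepancy $S(a_3/q)-v_{J}(\xi_3)$ is $O(2^{-ck})$ on the major arcs, so the remainder $A^{3}_{\lambda,\epsilon}$ goes into the $\ell^{2}$ piece by Plancherel. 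Without some such device for realizing the arithmetic weights as an actual bounded operator on $\ell^{r}$, your $B_{\lambda,\epsilon}$ bound does not go through.

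Two smaller remarks. The paper applies Proposition~\ref{IW3} once at the single threshold $k\approx\lambda^{\epsilon/2}$; your telescoping over dyadic $l$ with $k=2^{l}-1$ complicates the bookkeeping without gain, and in particular obscures that the relevant Ionescu--Wainger set $Y_{N}$ must be taken with $N=[k^{D/\delta'}]$ and that the split $\sum_{Z_N}=\sum_{Y_N}-\sum_{Y_N\setminus Z_N}$ (not a dyadic-in-$q$ split) is what one needs. Also, your ``finitely many leftover pieces'' for $B_{\lambda,\epsilon}$ should include the $\lambda$-dependent slab $\{j_1\le k\ \text{or}\ j_2\le k\}$ (the paper's $B^{1},B^{2}$), whose $\ell^{r}$ bound is where the $\lambda^{\epsilon}$ loss actually comes from; a fixed slab $j_2\le p$ alone is not enough to account for it.
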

 
\begin{proof}
If $\lambda< 1$, then  choose $A_{\lambda, \epsilon}f={\bf H}^{\rm{discrete}}_{\mathbb{F}}f$ and $B_{\lambda, \epsilon}={\bf 0}$. Then it satisfies the above conditions due to the case $p=2$. We next consider $\lambda\ge 1$. We shall choose $$A_{\lambda, \epsilon}=A_{\lambda, \epsilon}^{1}+A_{\lambda, \epsilon}^{2} +A_{\lambda, \epsilon}^{3}\ \text{and}\ B_{\lambda, \epsilon}=B_{\lambda, \epsilon}^{1}+B_{\lambda, \epsilon}^{2}.$$
 We shall determine in the order $B_{\lambda, \epsilon}^{1},$  $A^1_{\lambda, \epsilon},A^2_{\lambda, \epsilon},B^2_{\lambda, \epsilon}$ and lastly $A^3_{\lambda, \epsilon}$.
Take $$k=[\lambda^{\epsilon}]  $$ where $[c]$ is the largest integer $\le c$.    Define
$B_{\lambda, \epsilon}^{1}$ as an operator whose multiplier is
$$m_{B_{\lambda,\epsilon}^1}(\xi)=\sum_{j\in \mathbb{Z}^2(\mathbb{F}): 1\le j_2\le  k+1} H^{\rm{discrete}}_{j_1,j_2}(\xi) 
$$  
where
$ B_{\lambda,\epsilon}^1f=\sum_{j\in \mathbb{Z}^2(\mathbb{F}):1\le j_2\le k+1}  [H^{\rm{discrete}}_{j_1,j_2}]^{\vee}*f$.
By applying the   $\ell^p$ bound of the one-parameter
discrete Hilbert transform along polynomial curve uniformly in their coefficients,  we have $$\left\|B_{\lambda, \epsilon}^{1}\right\|_{\ell^{r} \rightarrow \ell^{r}} \leq Ck\le   C\lambda^{\epsilon}.$$
First, set $D=\frac{(3/2)}{\epsilon}$      in  (\ref{IW5}). Choose  the operator $A_{\lambda, \epsilon}^{1}$ whose multiplier is $$m_{A^1_{\lambda,\epsilon}}(\xi)=E_{k}(\xi)\ \text{ in (\ref{IW5}) satisfying}\ \left\|A_{\lambda, \epsilon}^{1}\right\|_{\ell^{2} \rightarrow \ell^{2}} \leq  \frac{C}{k^{9D/10}}\le \frac{C}{\lambda}.$$
Hence, from (\ref{IW5}) there remains to control
\begin{align}\label{aa}
 \sum_{q\le  k^{D/s}} \sum_{a;(a,q)=1} S (a/ q) m_{k}( \xi -a/q)
\end{align}
where $m_k$ is defined below (\ref{IW5}).
Let $N:=[k^{D/s}]$  to treat the sum $  \sum_{q\le  k^{D/s}} $.  In order to utilize   Lemma \ref{lem82} later,  we shall take $m_k$ in (\ref{IW5}) to play the role of $m((\xi-a/q)/\eta)$   in that lemma where   $m(\xi)$ is  expressed in (\ref{skc1}) with $\eta=2^{-9k/10}$.  Recall $Y_{N}=Y_{N, \delta}$ as the   set constructed in Lemma \ref{lem82} satisfying that $ Z_{N} \subset Y_{N} \subset Z_{e^{N ^{\delta}}}$ and the operator whose multiplier is
$  \sum_{a / q \in \mathcal{R}(Y_N)} m\left(\frac{\xi-a / q}{\eta}\right)$ is  bound in $\ell^p$.
To treat $Y_N$ and  our summation range $Z_N$ of $1\le q\le N=[k^{D/s}]$ in (\ref{aa})   such that $\text{gcd}(a,q)=1$,   split the range of $q$ in $Y_N$ so that
 $\sum_{Z_N}=\sum_{Y_N}-\sum_{Y_N\setminus Z_N}$. Take $A_{\lambda, \epsilon}^{2}$ whose  multiplier is 
\begin{align*}
m_{A_{\lambda, \epsilon}^{2}}(\xi)=	-\sum_{q \in Y_{N} \backslash Z_{N}}\sum_{a; (a,q)=1} S(a/ q) m_{k}   (\xi-a/q)\ \text{where $m_k$ in (\ref{IW5})}.
\end{align*}
Then from $q\ge N\ge k^{\frac{D}{s}}$, we have  $|S(a / q)|=O(q^{-s})=O(q^{-s/3}k^{-2D/3})=O(q^{-c}\lambda^{-1})$ since $[\lambda^{\epsilon}]=k$ and $  D=3/(2\epsilon)$. By this  with (\ref{025}) and the disjointness of the major arcs for each fixed $q$,  
$$
\left\|A_{\lambda, \epsilon}^{2}\right\|_{\ell^{2} \rightarrow \ell^{2}} \leq C / \lambda .
$$
Hence, there  remains the main part:
\begin{align}
\sum_{q \in Y_{N}}\sum_{a; (a,q)=1} S(a/ q) m_{k}  (\xi-a/q)\ \text{where $m_k$ is in  (\ref{IW5})}.\label{05k}
\end{align}
Take
$$v_{k}(\xi):=\frac{1}{[2^{k/2}]^2} \sum_{t\in \{1,\cdots,[2^{k/2}]\}^2} e^{-2 \pi i (\xi_1t_1+\xi_2t_2+\xi_3P(t_1,t_2))}$$ and splits (\ref{05k}) as the sum of the following two functions (multipliers):
\begin{align}
&m_{B_{\lambda, \epsilon}^{2} }(\xi):=	\sum_{q \in Y_{N}}\sum_{a; (a,q)=1}  v_{k}(\xi)m_{k} (\xi -a/q), \label{IW11}
\\
&m_{A_{\lambda, \epsilon}^{3} }(\xi):=\sum_{q \in Y_{N}}
\sum_{a:  (a,q)=1} ( S(a / q) -v_k(\xi))m_{k}(\xi-a/q).\label{pol2}
\end{align}
\begin{proof}[Proof of $\ell^r$ boundedness for $B_{\lambda, \epsilon}^{2}$]
We can write   $B_{\lambda, \epsilon}^{2}=U_{N} \circ V_{k}$ in (\ref{IW11}) where the operator $V_k$ has the bounded multiplier $v_k(\xi)$  such that $\left\|V_{k}\right\|_{\ell^{r} \rightarrow \ell^{r}} \leq C_{r}$ and
  the operator $U_{N}$ has  the multiplier:
\begin{align*}
m_{U_N}(\xi)= \sum_{q \in Y_{N}}\sum_{a; (a,q)=1} m_{k}  (\xi-a/q)\ \text{with $m_k$ in (\ref{IW5})}.
\end{align*}
Then we can express
\begin{align}\label{pol1}
m_{U_N}(\xi)= \sum_{q \in Y_{N}}\sum_{a; (a,q)=1} m \left(\frac{\xi-a/q}{\eta}\right)
\end{align}
where  $\eta=2^{-(1-1/10)k}$ and $m$ is given by
\begin{align}\label{skc1}
m(\xi)= \sum_{j=(j_1,j_2)\in\mathbb{Z}^2(\mathbb{F}); j_2\ge k}\phi\left( \xi_1\right) \phi\left( \xi_2\right) \phi\left(\frac{2^{-(1-1/10)k}\xi_3 }{2^{-(k,k)\cdot \mathfrak{m}}2^{k/10}}\right)\mathcal{H}^{\Lambda(P)}_{j}(2^{-(1-1/10)k} \xi)
\end{align}
which is  a bounded multiplier  in $L^r$ whose norm is bounded uniformly in $k$. This follows as
 $m( \xi)$ is a bounded multiplier in $L^r$ whose bound is independent of  $k$, which is proved in  (\ref{025h}).
Before applying  Lemma \ref{lem82}, take $$\delta=\frac{s}{4D}\ \text{so that}\ N^{2\delta}\sim (k^{\frac{D}{s}})^{2\delta}=k^{1/2}$$  
which implies $\eta=2^{-9k/10} \leq e^{-N^{2 \delta}}$.
 By applying
Lemma \ref{lem82}  for (\ref{pol1}), we have
\begin{align}\label{IW10}
	\left\|U_{N}\right\|_{\ell^{r} \rightarrow \ell^{r}}\le C_{r, \epsilon}(\ln  N)^{2/\delta}\lesssim C_{r,\epsilon}(\ln \lambda)^{2 / \delta} .
\end{align}
Due to (\ref{IW10}) and $\left\|V_{k}\right\|_{\ell^{r} \rightarrow \ell^{r}} \leq C_{r}$, we obtain that
$ 
\ \|B_{\lambda, \epsilon}^{2} \|_{\ell^{r} \rightarrow \ell^{r}} \leq C_{r, \epsilon} \lambda^{\epsilon}.
$ 
\end{proof}
\begin{proof}[Proof of $\ell^2$ boundedness for $A_{\lambda, \epsilon}^{3}$]
 On the other hand,   for  $\xi,q$  in  (\ref{pol2}),  check that
\begin{itemize}
\item
$|\xi_3-a_3/q| \leq 2 \cdot 2^{-(k,k)\cdot \mathfrak{m}}2^{k/10}$ and
 $|\xi_1-a_1/q|, |\xi_2-a_2/q|\le 2^{-9k/10}$,
\item  $q \leq e^{N^{\delta}} \le Ce^{k^{1 / 2}}$ due to $  Y_{N} \subset Z_{e^{N ^{\delta}}} $ and  $0\le t_1,t_2\le 2^{k / 2}$. 
\end{itemize}
Using the above size   after dividing $\{1,\cdots,2^{k/2}\}^2$ into smaller $q\times q$ sized boxes,  \begin{align*} 
	v_{J}(\xi)-S(a / q)=O(2^{-4k/10})\ \text{leading} \  \left\|A_{\lambda, \epsilon}^{3}\right\|_{\ell^{2} \rightarrow \ell^{2}} \leq C_{\epsilon,c} 2^{-c' k} \leq C_{\epsilon} / \lambda 
\end{align*}  
from Plancherel's theorem. 
\end{proof}
 Therefore, we proved
$$
 \ \sum_{i=1}^3\|A_{\lambda, \epsilon}^{i}\|_{\ell^2\rightarrow \ell^2}\le  C_{\epsilon} / \lambda\ \text{and}\ \sum_{i=1}^2\| B_{\lambda, \epsilon}^{i} \|_{\ell^r\rightarrow \ell^r}\le   C_{r,\epsilon}\lambda^{\epsilon}. $$
So, we  finish the proof of Lemma \ref{IW2}.
\end{proof}

	\section{Necessity}\label{Sec9}
Assume that a vertex $\mathbb{F}  \in\mathcal{F}^0({\bf N}(P,D_{\{-{\bf e}_1,-{\bf e}_2\}} ))$ is  $\mathfrak{m}=(m_1,m_2)=(\text{odd},\text{odd})$.  Choose $\mathfrak{q}=(q_1,q_2)\in (\mathbb{F}^*)^{\circ}$ such that 
\begin{align*}
q_1,q_2<0\ \text{where}\  |q_1|\ge |q_2| \ \text{and}\ |\mathfrak{q}|=1.
\end{align*}
Then one can find a supporting plane $\pi_{\mathfrak{q}}$    of  ${\bf N}(P,D_B)$ such that  $\mathbb{F}=\{\mathfrak{m}\}\in \pi_{\mathfrak{q}}$ and  $\Lambda(P)\setminus\{\mathfrak{m}\}\subset (\pi^+_{\mathfrak{q}} )^{\circ}$. By this, one can observe that  there exists a $h>0$ depending only on $\Lambda(P)$ satisfying that $$
 \mathfrak{q}\cdot (\mathfrak{n}- \mathfrak{m}) \ge  h>0 \ \text{ for all $\mathfrak{n}\in \Lambda(P)\setminus\{\mathfrak{m}\}$. }
$$
Put $j=(j_1,j_2):= (-\alpha q_1,-\alpha q_2)=\alpha(- \mathfrak{q})$ where $-q_1,-q_2>0$. Then it holds that 
  \begin{align}\label{92}
j\cdot ( \mathfrak{m}- \mathfrak{n})    \ge \alpha h>0, \ \text{that is}\  2^{j\cdot \mathfrak{m}} \ge 2^{\alpha h }  2^{j\cdot \mathfrak{n}}\ \text{  $\forall\mathfrak{n}\in \Lambda(P)\setminus\{\mathfrak{m}\}$. }
\end{align}
Define some constants:
  \begin{itemize}
  \item $N:=(N_1,N_2)=(2^{j_1},2^{j_2})$,
  \item $ b:=|q_2|/|q_1| $ and $\epsilon=\min\{b, |h| \}/10$ so that $h\alpha\ge h |j| \ge 10\epsilon |j| $,
\item $\xi_3:=\frac{ |N|^{\epsilon}}{N_1^{m_1} N_2^{m_2}}.$
\end{itemize}
 Then
\begin{align*}
|N|/2\le N_1\le |N|\ \text{and}\ N_1^b= N_2.
\end{align*}
We now claim that
\begin{align}\label{0521}
\left|\sum_{k\in \mathcal{R}(N)}    \frac{e^{2\pi i\xi_3 P(t_1,t_2)}}{t_1 t_2}\right|  \ge c\log |N|\ \text{for $c>0$ independent of $N$}
\end{align}
which yields the necessity part of Main Theorem 1.
\begin{proof}[Proof of (\ref{0521})]
To show (\ref{0521}),   start with 
	$$G_N(t_1,t_2):=e^{2\pi i\xi_3 P(t_1,t_2)}\psi_{N_1}(t_1)\psi_{N_2}(t_2)\frac{1}{t_1}\frac{1}{t_2}.$$
Here, we defined a smooth function $\psi_{M} $ such that
\begin{itemize}
\item  $\text{supp}\left( \psi_{M}\right)=\{s: 1/2\le |s|\le M+1/2\}$ with  $\psi_M(s)=1$ on $ [-M,-1]\cup [1,M]$,
\item
$ |\frac{d}{ds}  \psi_{M}(s) |\le 1$   and  $\text{supp}\left(\frac{d}{ds}  \psi_{M}\right)\subset\{s: 1/2\le |s|\le 1\}\cup \{s: M\le |s|\le M+1/2\}$.
\end{itemize}
Then
	$$\sum_{t\in \mathcal{R}(N)}    \frac{e^{2\pi i\xi_3 P(t_1,t_2)}}{t_1 t_2}  =\sum_{k\in \mathbb{Z}^2} G_N(k_1,k_2).$$
		By the Poisson-summation formula, it holds that
\begin{align*}
&\sum_{k\in \mathbb{Z}^2} G_N(k_1,k_2)=\sum_{k\in \mathbb{Z}^2} \widehat{G_N}(k_1,k_2)\end{align*}
where $\widehat{G_N}(k_1,k_2) =\int e^{2\pi i k\cdot t} G_N(t_1,t_2) dt$ which is given by the integral
\begin{align*}
\widehat{G_N}(k_1,k_2) = \int e^{2\pi i k\cdot t} e^{2\pi i\xi_3 P(t_1,t_2)}\psi_{N_1}(t_1)\psi_{N_2}(t_2)\frac{dt_1}{t_1}\frac{dt_2}{t_2}.
		\end{align*}
	We split
	$$\sum_{k\in \mathbb{Z}^2} \widehat{G_N}(k_1,k_2)=\widehat{G_N}(0,0)+\sum_{k\ne (0,0)} \widehat{G_N}(k_1,k_2)$$
	and observe that 
	$$\sum_{k\ne (0,0)} \widehat{G_N}(k_1,k_2)=O(1)$$
	from $ \widehat{G_N}(k_1,k_2)=O(1/|k|^{5/2})$, which follows from the derivative conditions $$|\nabla_{t} \left(k\cdot t+\xi_3P(t_1,t_2)\right)|\approx |k|.$$
 We express $\widehat{G_N}({\bf 0})$ as
\begin{align*}
&  \int  e^{2\pi i\xi_3 P(t_1,t_2)}\psi_{N_1}(t_1)\psi_{N_2}(t_2)\frac{dt_1}{t_1}\frac{dt_2}{t_2} = \int  e^{2\pi i\xi_3 t^{\mathfrak{m}} }\psi_{N_1}(t_1)\psi_{N_2}(t_2)\frac{dt_1}{t_1}\frac{dt_2}{t_2}+ \mathcal{E}_N.
\end{align*}
Here, by the mean value theorem and (\ref{92}),
\begin{align*}
\mathcal{E}_N&\le C \sum_{\mathfrak{n}\in \Lambda(P)\setminus\{\mathfrak{m}\}} \int  |\xi_3 t^{\mathfrak{n}}|\psi_{N_1}(t_1)\psi_{N_2}(t_2)\frac{dt_1}{t_1}\frac{dt_2}{t_2}\\
&\le C \sum_{\mathfrak{n}\in \Lambda(P)\setminus\{\mathfrak{m}\}}  |\xi_3|2^{j\cdot \mathfrak{n}}\le  C \sum_{\mathfrak{n}\in \Lambda(P)\setminus\{\mathfrak{m}\}} | \xi_3|  2^{-h\alpha } 2^{  j\cdot \mathfrak{m}}\\
&\le C \frac{2^{-h \alpha}|N|^{\epsilon}}{N_1^{m_1}N_2^{m_2}} N_1^{m_1}N_2^{m_2}\le C|N|^{-\epsilon}
\end{align*}
due to the definition of constants above. We now show that
\begin{align}\label{9431}
\left|\int  e^{2\pi i\xi_3 t^{\mathfrak{m}} }\psi_{N_1}(t_1)\psi_{N_2}(t_2)\frac{dt_1}{t_1}\frac{dt_2}{t_2}\right|\ge c\log |N|.
\end{align}
Split   the region $$ \{1/2\le |t_i|\le1\}\cup\{1\le |t_i|\le N_i\}\cup \{N_i\le |t_i|\le N_i+1/2\}.$$
By using the uniform boundedness  $\left|\int e^{2\pi i\xi_3 t^{\mathfrak{m}} }\psi_{N_1}(t_i)\frac{dt_i}{t_i}\right|\le 10$ combined with  $\psi_{N_i}(t_i)=1$ on $ [-N_i,-1]\cup [1,N_i]$, we obtain that
\begin{align*}
&\int  e^{2\pi i\xi_3 t^{\mathfrak{m}} }\psi_{N_1}(t_1)\psi_{N_2}(t_2)\frac{dt_1}{t_1}\frac{dt_2}{t_2}\\
 &=
\int_{1\le |t_1|\le N_1} \int_{1\le |t_2|\le N_2}  e^{2\pi i\xi_3 t^{\mathfrak{m}} }    \frac{dt_1}{t_1}\frac{dt_2}{t_2}+O(1).
\end{align*}
As $m_1,m_2$ are odd,  it suffices to show that for $N\gg 1$,
\begin{align}\label{0mm4}
4\left|\int_{1}^{N_1}\int_{1}^{N_2}  \sin(2\pi i\xi_3 t_1^{m_1}t_2^{m_2}) \frac{dt_1}{t_1}\frac{dt_2}{t_2}\right|\ge (\epsilon/2^{100}) \log |N|.
\end{align}
Hence, to finish the proof of (\ref{0521}), we shall prove (\ref{0mm4}) below.
\end{proof}
\begin{proof}[Proof of (\ref{0mm4})]
  The integral on the LHS of (\ref{0mm4}) is comparable to
  \begin{align*}
\int_{1}^{N_2^{m_2}}  \int_{1}^{N_1^{m_1}}  \sin(2\pi i\xi_3 t_1 t_2 ) \frac{dt_1}{t_1}\frac{dt_2}{t_2}&=A+B+C
\end{align*}
where
\begin{align*}
A&=\int_{N_2^{m_2} |N|^{-\epsilon}2^{100}}^{N_2^{m_2}}  \int_{1}^{N_1^{m_1}}  \sin(2\pi \xi_3 t_1 t_2 ) \frac{dt_1}{t_1}\frac{dt_2}{t_2},\\
B&=\int^{N_2^{m_2} |N|^{-\epsilon} 2^{100}}_{N_2^{m_2}|N|^{- \epsilon } 2^{-100}}  \int_{1}^{N_1^{m_1}}  \sin(2\pi \xi_3 t_1 t_2 ) \frac{dt_1}{t_1}\frac{dt_2}{t_2},\\
C&=\int^{N_2^{m_2} |N|^{- \epsilon}2^{-100} }_{1}  \int_{1}^{N_1^{m_1}}  \sin(2\pi \xi_3 t_1 t_2 ) \frac{dt_1}{t_1}\frac{dt_2}{t_2}.
  \end{align*}
	 By applying the mean value property with the size $\xi_3=\frac{|N|^{\epsilon}}{N_1^{m_1}N_2^{m_2}}$, we obtain that $|C|\lesssim 1$.  By utilizing the uniform boundedness of $$\left|\int_{1}^{N_1^{m_1}}  \sin(2\pi \xi_3 t_1 t_2 ) \frac{dt_1}{t_1}\right|,$$ we have
	 $|B|\lesssim 1$. Finally, to treat $A$, use the change of variable $\xi_3 t_1t_2\rightarrow s_1 $, then
	 \begin{align*}
	  \int_{1}^{N_1^{m_1}}  \sin(2\pi \xi_3 t_1 t_2 ) \frac{dt_1}{t_1}&= \int_{s_1=\frac{|N|^{\epsilon}}{N_1^{m_1}N_2^{m_2}}  t_2}^{s_1=\frac{|N|^{\epsilon}N_1^{m_1}}{N_1^{m_1}N_2^{m_2}} t_2}  \sin(2\pi s_1) \frac{ds_1}{s_1} \\
	  &=\pi/2 +O(1/100)
	 \end{align*}
because of
$  N_2^{m_2} |N|^{-\epsilon}2^{100} \le t_2\le N_2^{m_2}  $  and the observation
 $$\left|\int_0^M \sin (2\pi s_1)\frac{ds_1}{s_1}-\int_0^{\infty} \sin (2\pi s_1)\frac{ds_1}{s_1}\right|\le 2^{-90}\ \text{ whenever $M>2^{100}$.} $$ 
 Therefore, take $\int dt_2$ for the above approximation to obtain  
 $A\approx \epsilon ( \pi/2)\log |N|$. 
 \end{proof}

\end{document}